\documentclass[pdflatex,sn-mathphys-num]{sn-jnl}

\usepackage{graphicx}%
\usepackage{multirow}%
\usepackage{amsmath,amssymb,amsfonts}%
\usepackage{amsthm}%
\usepackage{mathrsfs}%
\usepackage[title]{appendix}%
\usepackage{xcolor}%
\usepackage{textcomp}%
\usepackage{manyfoot}%
\usepackage{booktabs}%
\usepackage{algorithm}%
\usepackage{algorithmicx}%
\usepackage{algpseudocode}%
\usepackage{listings}%
\usepackage{enumitem}
\usepackage{bbm,bm,mathrsfs,array,esint,cases,mathtools}
\usepackage{graphics,color,eso-pic,graphicx,bbding,rotating,verbatim,newtxtext,newtxmath}
\usepackage{tikz,makecell}
\usepackage{hyperref}
\hypersetup{colorlinks=true, pdfstartview=FitV, linkcolor=blue, citecolor=red, urlcolor=red}
\usepackage{float,tikz}
\usetikzlibrary{arrows.meta, positioning}

\theoremstyle{thmstyleone}%
\newtheorem{theorem}{Theorem}[section]

\newtheorem{prop}[theorem]{Proposition}%
\newtheorem{lemma}[theorem]{Lemma}

\theoremstyle{thmstyletwo}%
\newtheorem{example}{Example}%
\newtheorem{remark}{Remark}[section]%

\theoremstyle{thmstylethree}%
\newtheorem{definition}{Definition}[section]%

\usepackage{mdwlist}
\numberwithin{equation}{section}
\allowdisplaybreaks[3]

\DeclareMathOperator*{\esssup}{ess\,sup}
\usetikzlibrary{positioning, calc}

\begin{document}

\title[Abstract Orlicz-Morrey spaces and applications]{Abstract Orlicz-Morrey spaces and applications}

\author[1]{\fnm{Fate} \sur{Shan}}\email{107552400558@stu.xju.edu.cn}
\equalcont{These authors contributed equally to this work.}

\author*[1]{\fnm{Jiang} \sur{Zhou}}\email{zhoujiang@xju.edu.cn}
\equalcont{These authors contributed equally to this work.}

\affil[1]{\orgdiv{School of Mathematics and System Sciences}, \orgname{Xinjiang University}, \orgaddress{\city{Urumqi}, \postcode{830046}, \country{People's Republic of China}}}

\abstract{This work introduces a class of abstract Orlicz-Morrey spaces endowed with a ball-basis on general measure spaces and defines the associated concept of \(\Psi\)-bounded oscillation operators. Within this framework, we establish pointwise estimates and norm inequalities for these operators via sparse domination techniques. As applications, we verify that this class of \(\Psi\)-bounded oscillation operators includes maximal operators and Carleson-type operators on general measure spaces, as well as \(\omega\)-Calder\'on-Zygmund operators and intrinsic square operators on \(\mathbb{R}^n\), thus providing a generalization of certain classical Orlicz-Morrey spaces and their associated operator theory.}

\keywords{abstract Orlicz-Morrey spaces, \(\Psi\)-bounded oscillation operators, maximal operators, sparse operators, norm estimation.}

\maketitle

\baselineskip 15pt


\section{\bf Introduction}\label{Section 1}

To unify the study of diverse operators---including maximal operators, \(\omega\)-Calder\'on-Zygmund operators, intrinsic square operators, and Carleson-type operators---within a single framework, we develop the theory of abstract Orlicz-Morrey spaces endowed with a ball-basis (cf. Definition \ref{Orlicz-Morrey space}) and introduce \(\Psi\)-bounded oscillation operators (\(\Psi\)-BOOs, cf. Definition \ref{BOO}).

\subsection{Background and motivation}\label{Section 1.1}

Based on the content studied in this paper, the background in this section will be elaborated from the following three aspects.

{\bf (1) The theory of Orlicz-Morrey function spaces}

To our knowledge, multiple definitions exist for Orlicz-Morrey spaces on the underlying space \((\mathbb{R}^n,\mathcal{L}^n)\)---\(\mathcal{L}^n\) being the standard Lebesgue measure---as seen in \cite{OM-KK-1991,OM-Nakai-2004,OM-SST-2012,OM-DGS-2014,OM-H-2023}. A foundational contribution was made by Kokilashvili and Krbec \cite{OM-KK-1991}, who introduced the Orlicz-Morrey class in 1991. Subsequent developments on this class appear in \cite{OM-KK-1997,OM-KK-2004,OM-KK-2019}. In 2004, Nakai \cite{OM-Nakai-2004} formulated the Orlicz-Morrey space and showed that it generalizes \(L^p(\mathbb{R}^n,\mathcal{L}^n)\) spaces, Morrey spaces, and Orlicz spaces. Furthermore, he established the boundedness of the Hardy-Littlewood maximal operator on this space. Further work by Nakai \cite{OM-Nakai-2006} proved the boundedness of Calder\'on-Zygmund operators in Orlicz-Morrey spaces and derived modular inequalities for Orlicz spaces. Detailed structural properties were later provided in \cite{OM-Nakai2008,OM-Nakai-2011}, where Nakai verified that Orlicz-Morrey spaces contain \(L^p(\mathbb{R}^n,\mathcal{L}^n)\), \(L^\infty(\mathbb{R}^n,\mathcal{L}^n)\), (generalized) Morrey spaces, and Orlicz spaces; defined weak Orlicz-Morrey spaces; established H\"older's inequality; characterized their predual spaces; and clarified inclusion relations among different Orlicz-Morrey spaces. Additional relevant results in this theory can be found in \cite{OM-Nakai-2019,OM-Nakai-2023,OM-Nakai-2021-1,OM-Nakai-2021-2,OM-Nakai-2022,OM-Nakai-2024}.

The theory of Orlicz-Morrey spaces, when developed over a variety of underlying spaces, has given rise to numerous variants. These include central Orlicz-Morrey spaces \cite{COM-2015}, vanishing generalized Orlicz-Morrey spaces \cite{VOM-2014,VOM-2016}, Orlicz-Morrey spaces on spaces of homogeneous type \cite{OM-DGS-2019,TSP2020,OM-DGS-2021}, weighted Orlicz-Morrey spaces \cite{OM-Nakai-2021-2}, and local Orlicz-Morrey spaces \cite{LOM,OM-H-2022}, among others. For a broader overview of contributions to this theory, we refer to \cite{OM-SST-2015,OM-SST-2016,OM-SST-2021,OM-SST-2025,OM-DGS-2015-2,OM-DGS-2018,TSP2020,OM-DGS-2021,OM-DGS-2016,OM-SST-2014,OM-DGS-2015}.

{\bf (2) Abstract function spaces endowed with a ball-basis and the theory of bounded oscillation operators}

Regarding the bounded oscillation operators \(T_K\) defined on a measure space \((X,\,\mathfrak{M},\, \mu)\) endowed with a ball-basis, Karagulyan \cite{Abstract2019} established weighted estimates for \(T_K\). This work introduced, for the first time, the concept of a ball-basis for abstract Lebesgue spaces and defined the associated class of operators \(T_K\).
\begin{align}\label{Theorem 6.2}
	\|T_K\|_{L^p(\omega) \to L^p(\omega)} \lesssim c(p,\mathscr{C}_0) \left( \mathscr{C}_1(T_K) + \mathscr{C}_2(T_K) +\|T_K\|_{L^1 \to L^{1,\infty}}\right) [\omega]_{A_p}^{\max\left\{\frac{p+2}{p(p-1)},\frac{3p-2}{p}\right\}},
\end{align}
where \(\mathscr{C}_0\) is the constant from Definition \ref{ball-basis}, and \(\mathscr{C}_1(T_K),\,\mathscr{C}_2(T_K)\) are the constants appearing in Definition \ref{BOO}. We refer to this inequality as the {\sffamily K-type weighted estimate}. This estimate unifies and extends a number of classical results, including:

\begin{itemize*}
	
	\item The Hardy-Littlewood maximal operator admits a bounded extension on weighted \(L^p(\mathbb{R}^n, w\,d\mathcal{L}^n)\) spaces, a fact following the seminal work of Muckenhoupt \cite{Muckenhoupt1972} and the subsequent sharp norm estimate by Buckley \cite{Buckley1993}.

	\item Research by Coifman \cite{Coifmany1974}, followed by Dragi\v cevi\'c \cite{Dragicevic2005}, Hyt\"onen \cite{Hytonen2012}, and Lacey \cite{Lacey2017}, established the weighted \(L^p(\mathbb{R}^n, w\,d\mathcal{L}^n)\)-boundedness of Calder\'on-Zygmund type operators on \(\mathbb{R}^n\).
	
	\item Weighted bounds for the martingale transform under different measures were obtained by Lacey \cite{Lacey2017}, Thiele \cite{Thiele2015}, and others.
	
	\item Grafakos \cite{Grafakos2005}, Di Plinio \cite{Di Plinio2014}, and others established weighted boundedness results for Carleson-type operators on \(L^p(\mathbb{R}^n,\mathcal{L}^n)\) spaces.
	
\end{itemize*}

In 2023, Cao \cite{Caomingming2023} refined the definition of a ball-basis introduced in \cite{Abstract2019} and established a class of Banach-valued bounded oscillation operators on abstract Lebesgue spaces. It was verified that Calder\'on-Zygmund type operators---such as the Hardy-Littlewood maximal operator and \(\omega\)-Calder\'on-Zygmund operators---as well as operators beyond the classical Calder\'on-Zygmund theory---such as intrinsic square operators and Carleson-type operators---belong to this class. Moreover, K-type weighted estimates were also obtained. Building on this work, Zhou \cite{Zhou2025} introduced the abstract generalized Orlicz space endowed with a ball-basis in 2025 and established the boundedness of the Hardy-Littlewood maximal operator on this space. Subsequently, in 2026, we \cite{Shan2026} constructed abstract Morrey spaces endowed with a ball-basis (cf. Definition \ref{Morrey space}) and bounded oscillation operators, and derived norm estimates for such operators on these abstract ball-basis Morrey spaces.

{\bf (3) Sparse domination techniques}

Sparse domination techniques trace their origins to Lerner's foundational work in \((\mathbb{R}^n,\mathcal{L}^n)\) \cite{Lerner2013A, Lerner2016}, which introduced the key concepts of sparse sets and sparse operators. By means of these sparse operators, he achieved pointwise estimates for operators of weak type \cite[Theorem 4.2]{Lerner2016} as well as weighted norm estimates \cite[Corollary 4.6]{Lerner2016}.

Karagulyan \cite{Abstract2019} introduced two pairwise disjoint \(\beta\)-sparse families (cf. Definition \ref{sparse}) on an abstract measure space \((X,\,\mathfrak{M},\,\mu)\) endowed with a ball-basis, and thereby defined the sparse operator \(\mathcal{A}_{\mathcal{S},r}\) for \(r \geq 1\), where \(\mathcal{S}\) denotes the union of these two sparse families. This construction led to the sparse domination of the bounded oscillation operator \(T_K\).
\begin{align}\label{Theorem 1.1}
	\left| T_K f(x) \right| \lesssim \left[ \mathscr{C}_1(T_K) + \mathscr{C}_2(T_K) +\|T_K\|_{L^r \to L^{r,\infty}} \right] \mathcal{A}_{\mathcal{S}, r}f(x)\quad a.e.\;\; x \in B.
\end{align}
We refer to (\ref{Theorem 1.1}) as the {\sffamily K-type sparse domination}. The K-type sparse domination generalizes the results in \cite[Theorem 3.1, Theorem 4.2]{Lerner2016}. The development of sparse domination theory has seen several key refinements. A foundational advancement was made by Lerner \cite[Theorem 1.1]{Lerner2020}, who identified near-minimal sufficient conditions for the pointwise control of oscillation operators by sparse operators. Building on this, Cao \cite[Theorem 1.5]{Caomingming2023} later achieved a refinement of the parameter \(\beta\) within the so-called K-type sparse domination. This refined framework was subsequently applied by the authors to the setting of Morrey spaces in \cite[Theorem 1]{Shan2026}. For further developments in sparse domination, see \cite{sparse2026,Lerner2024,sparse2021,Lerner2013B,Lerner2020A}.

\vspace{10pt}

Based on the background discussion above related to the present work, the primary motivation for our research stems from the following three aspects:
\begin{itemize*}
	
	\item Recent developments in singular integral operator theory, particularly the trend toward abstraction in its formulation, continue to drive the evolution of the field. Building upon the pioneering contribution of Karagulyan \cite{Abstract2019}, the theoretical framework was subsequently advanced by Cao et al. \cite{Caomingming2023}. Their work provides a robust foundation for studying other theories on abstract Lebesgue spaces endowed with a ball-basis \cite{Abstract2021, Abstract2023} and establishes a theoretical groundwork for other types of abstract function spaces possessing a ball-basis \cite{Abstract2026, Abstract2026BMO, Shan2026, Zhou2025}. Inspired by this question of extendibility, our work aims to generalize these advances to broader classes of function spaces.

	\item Both historically emerging from distinct mathematical needs, Orlicz and Morrey spaces were developed to extend and complement the classical Lebesgue framework. The inception of Morrey spaces by Morrey \cite{Morrey1938} was driven by problems in elliptic partial differential equations, specifically to quantify the local regularity of their solutions. Independently, Orlicz spaces found their utility in analyzing the limiting behavior of the Hardy-Littlewood maximal operator near \(L^1(\mathbb{R}^n,\mathcal{L}^n)\) \cite{Kita1996,Kita1997,Cianchi1999}. The subsequent unification into Orlicz-Morrey spaces represents a synthesis of these ideas, yielding a theory of considerable breadth and depth, as evidenced by the extensive body of subsequent research \cite{Morrey1964,Wmq2024,Peetre1966,Peetre1969,Nakai2000,OM-Nakai2008,Xue2025,MorreyBook1,MorreyBook2,M_Phi1995,OM-KK-2019,OM-Nakai-2011}. A parallel and equally important aim is the construction of a rigorous, ball-basis-dependent framework for abstract Orlicz-Morrey spaces, which we believe would greatly broaden the reach of the classical theory.
	
	\item For Calder\'on-Zygmund type operators, pointwise bounds and norm estimates are commonly used to describe their behavior. Fundamentally, this is because the oscillatory nature of such operators plays a crucial role. In the Euclidean setting \((\mathbb{R}^n, \mathcal{L}^n)\), Lerner's foundational work revealed that the technique of local mean oscillation \cite{Lerner2010} and the method of sparse domination \cite{Lerner2013A,Lerner2016,Lerner2020} provide powerful tools for controlling Calder\'on-Zygmund type operators. We note that \cite[Theorem 4.2]{Lerner2016} and various forms of \(K\)-type sparse domination imply that oscillation-type operators must satisfy a weak boundedness condition in order to be pointwise dominated by sparse operators. Following this technical line, Lerner established in \cite[Theorem 1.1]{Lerner2020} nearly minimal assumptions under which sparse domination can be achieved. Building on these transformative advances that continue to define the frontier of quantitative estimation and operator theory \cite{Lerner2020A,LKW2018,Lerner2024,sparse2026,OM-sparse2025,CYPsparse2025,XQYsparse2025,Lerner2019,Xue2025}, a principal aim of this work is to adapt this powerful framework to the setting of abstract Orlicz-Morrey spaces endowed with a ball-basis. This extension will facilitate the study of both oscillation operators and sparse operators within this generalized context.
	
\end{itemize*}

\subsection{Main results and related contributions}\label{Section 1.2}

This subsection presents the principal findings and contributions of this work. Our investigation commences by establishing sparse domination within the framework of abstract Orlicz-Morrey spaces endowed with a ball-basis.

\begin{theorem}\label{main}
	Let \((X,\,\mathfrak{M},\,\mu)\) be a measure space endowed with a ball-basis \(\mathfrak{B}\) and let \(\lambda > 3\mathscr{C}_0^6\). Suppose that \((\Phi,\Psi,\phi,\psi)\in\mathscr{Y} \uplus \mathscr{G}\) and \((\Phi, \phi) \in E_{mb}\). Assume that \(T\) is a \(\mathbb{B}\)-valued \(\Psi\)-BOO with \(T \in \mathbb{W}_{\Psi, \psi, \lambda}\), and let \(\mathscr{T}\) be a \(\mathbb{B}\)-valued linear operator such that \(Tf(x) = \|\mathscr{T}f(x)\|_\mathbb{B}\). Then there exists a family \(\mathcal{S} \subset \mathfrak{B}\) which is the union of two \(\frac{1}{2 \mathscr{C}_0^3}\)-sparse families, such that for every \(f \in L^{(\Psi,\psi)}(X) \subset L^{(\Phi,\phi)}(X)\) and every \(B \in \mathfrak{B}\),
	\begin{align}\label{ine main}
		\|\mathscr{T}f(x)\|_\mathbb{B} \lesssim \mathscr{C}(T) \cdot \mathcal{A}_{\mathcal{S}, \Phi,\phi} f(x), \quad a.e. \; x \in B,
	\end{align}
	where \(\mathscr{C}(T) := \mathscr{C}_1(T) + \mathscr{C}_2(T) + \|T\|_{L^{(\Psi, \psi)}(X) \to wL^{(\Psi,\psi)}(X)}\). Here, \(\mathscr{C}_0\) is the constant appearing in Definition \ref{ball-basis}, while \(\mathscr{C}_1(T)\) and \(\mathscr{C}_2(T)\) are the constants introduced in Definition \ref{BOO}.
\end{theorem}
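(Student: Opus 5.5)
We follow the Karagulyan--Cao scheme behind the K-type sparse domination (\ref{Theorem 1.1}), with the $L^r$ averages replaced by the Orlicz-Morrey local averages that define $\mathcal{A}_{\mathcal{S},\Phi,\phi}$. Fix $B\in\mathfrak{B}$ and $f\in L^{(\Psi,\psi)}(X)$. The main step is a one-step recursive claim. For every ball $B_0$ there is a family of pairwise disjoint balls $\{B_j\}\subset B_0$ (after enlargement by the ball-basis axioms of Definition \ref{ball-basis}) with $\mu(\bigcup_j B_j)\le \frac{1}{2}\mu(B_0)$. On $B_0\setminus\bigcup_j B_j$ we want
\begin{align*}
\|\mathscr{T}(f\mathbf 1_{B_0^*})(x)\|_{\mathbb B}\lesssim \mathscr{C}(T)\,\langle f\rangle_{\Phi,\phi,B_0^*},
\end{align*}
and on each $B_j$ we want the oscillation of $\mathscr{T}(f\mathbf 1_{B_0^*})-\mathscr{T}(f\mathbf 1_{B_j^*})$ to be bounded by the same quantity. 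Here $B^*$ denotes the $\mathscr{C}_0$-hull and $\langle f\rangle_{\Phi,\phi,B}$ the local Luxemburg-type average appearing in $\mathcal{A}_{\mathcal{S},\Phi,\phi}$. Iterating the claim and using linearity of $\mathscr{T}$ produces nested generations of balls $\mathcal{S}_k$. Each generation occupies at most half the measure of the previous one, so the sets $E_Q=Q\setminus\bigcup(\text{children})$ have $\mu(E_Q)\ge \frac12\mu(Q)$ relative to $Q$. Splitting the starting families on the two ``sides'' of $B$, as Karagulyan does to cover $X$, gives two families. Passing between $Q$ and $Q^*$ costs factors $\mathscr{C}_0^3$, which is why each family is $\frac{1}{2\mathscr{C}_0^3}$-sparse.

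To build the exceptional balls we define
\[
E=\Big\{x\in B_0:\ T(f\mathbf 1_{B_0^*})(x)>\lambda' \mathscr{C}(T)\langle f\rangle_{\Phi,\phi,B_0^*}\Big\}\cup\Big\{x\in B_0: \mathcal M_{\mathfrak B}^{\Phi,\phi}f(x)>\lambda'\langle f\rangle_{\Phi,\phi,B_0^*}\Big\}.
\]
The measure of the first set is controlled by the weak-type hypothesis $T\in\mathbb W_{\Psi,\psi,\lambda}$ together with the $wL^{(\Psi,\psi)}$ norm bound, localized to $B_0^*$. The localization comes from the Morrey weight $\phi$, or $\psi$, evaluated at $\mu(B_0^*)$, and the hypotheses $(\Phi,\Psi,\phi,\psi)\in\mathscr{Y}\uplus\mathscr{G}$ are used to compare the $(\Psi,\psi)$ average with the $(\Phi,\phi)$ average. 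The measure of the second set is controlled by the weak boundedness of the ball-basis Orlicz maximal function, which is where $(\Phi,\phi)\in E_{mb}$ enters. Choosing $\lambda'$ large gives $\mu(E)\le \frac{1}{2\mathscr{C}_0^3}\mu(B_0)$. A density-point covering lemma for ball-bases, in the Calder\'on--Zygmund style and using $\lambda>3\mathscr{C}_0^6$, then yields disjoint maximal balls $B_j$ covering $E$ a.e. whose hulls stay inside $B_0^*$. On each $B_j$ the oscillation terms are handled by the two $\Psi$-BOO conditions of Definition \ref{BOO}: $\mathscr{C}_1(T)$ bounds $\sup_{x,x'\in B_j}|T(f\mathbf 1_{B_0^*\setminus B_j^*})(x)-T(f\mathbf 1_{B_0^*\setminus B_j^*})(x')|$ by an $(\Psi,\psi)$-average over a larger ball, and $\mathscr{C}_2(T)$ controls the localization error. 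Since $B_j\not\subset E$ by maximality, each such average is at most $\lambda'\langle f\rangle_{\Phi,\phi,B_0^*}$.

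The last step is to sum the recursion. Writing $\|\mathscr{T}f\|$ on $B$ as a telescoping sum over generations gives a.e. $\|\mathscr{T}f(x)\|_{\mathbb B}\lesssim \mathscr{C}(T)\sum_{Q\in\mathcal S}\langle f\rangle_{\Phi,\phi,Q}\mathbf 1_Q(x)=\mathscr{C}(T)\mathcal{A}_{\mathcal S,\Phi,\phi}f(x)$. This needs $\mu(\bigcap_k\bigcup\mathcal S_k)=0$, which follows from the geometric decay of the measure of each generation, and it needs the initial quantities to be finite, which follows from the embedding $L^{(\Psi,\psi)}\subset L^{(\Phi,\phi)}$.

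The main obstacle is the first measure estimate. The weak-type bound is global in the Orlicz-Morrey quasinorm, whereas we need a local estimate on $B_0$ with constant independent of $B_0$. Our first attempt is to test the $wL^{(\Psi,\psi)}$ norm against the single ball $B_0^*$, which gives $\Psi$-level bounds in which $\phi(\mu(B_0^*))$ appears. We would then invoke the structural conditions of the class $\mathscr{Y}\uplus\mathscr{G}$, namely a doubling or almost-monotone type condition on $\phi,\psi$ and the relation between $\Phi$ and $\Psi$, to convert these bounds into $\mu(E)\lesssim \mu(B_0)/\lambda'$. If that conversion fails, we would restrict to $f\mathbf 1_{B_0^*}$ and normalize $f$ so that $\langle f\rangle_{\Phi,\phi,B_0^*}=1$ before applying the weak bound.
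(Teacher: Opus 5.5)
\textbf{There is a genuine gap in the one-step claim on the exceptional balls.} Your overall scheme (stopping time, telescoping along generations, two sparse families of hulls $B^{[3]}$) matches the paper, but the estimate on the exceptional balls does not follow from your exceptional set.

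What telescopes is a pointwise bound for $\|\mathscr{T}(f\mathbb{I}_{B_0^{[3]}})(x)-\mathscr{T}(f\mathbb{I}_{B_j^{[3]}})(x)\|_{\mathbb{B}}$ on $B_j$ (your $B_0^*$ is the paper's $B_0^{[3]}$), not an oscillation bound. (\(\Psi\)-BOO-II) only controls the oscillation of the far part over a ball. Since $B_j$ may lie almost entirely in $E$, you must pass to a larger ball $\widetilde{B}_j\not\subset E$ and anchor at a point $\xi\in\widetilde{B}_j\setminus E$. Note that it is $\widetilde B_j$, not $B_j$, that escapes $E$. At $\xi$ you need $\|\mathscr{T}(f\mathbb{I}_{B_0^{[3]}})(\xi)-\mathscr{T}(f\mathbb{I}_{B_0^{[3]}}\mathbb{I}_{\widetilde{B}_j^\P})(\xi)\|_{\mathbb{B}}$ to be small. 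Your $E$ only controls $\|\mathscr{T}(f\mathbb{I}_{B_0^{[3]}})(\xi)\|_{\mathbb{B}}$ and the maximal function at $\xi$, so this term (the paper's $\mathscr{Z}_2$) is uncontrolled.

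The paper closes this by stopping with $\Upsilon f=\max\{|Tf|,T^*f,\mathscr{C}(T)\mathcal{M}_{\mathfrak{B},\Phi,\phi}f\}$, which contains the maximal truncation $T^*$. This requires proving the weak $(\Psi,\psi)$ bound $\|T^*\|\lesssim\mathscr{C}_2(T)+\|T\|_{L^{(\Psi,\psi)}\to wL^{(\Psi,\psi)}}$ (Theorem \ref{Th 3.7}), and your plan has no counterpart to it. Alternatively, you would need a stopping rule that forces a fixed proportion of $\widetilde{B}_j$ outside $E$, so that $\mathbb{W}_{\Psi,\psi,\lambda}$ on $\widetilde{B}_j$ gives a common good point. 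The rule ``$B_j\not\subset E$ by maximality'' does not give this.

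\textbf{The return to the scale of $B_j$ is also missing, and the sparseness step relies on nestedness that fails here.} Going from $\widetilde{B}_j^\P$ back to $B_j^{[3]}$ (the paper's $\mathscr{Z}_3$) needs $\Delta_T(B_j^{[2]},\widetilde{B}_j)\lesssim\mathscr{C}(T)$. A general ball-basis has neither maximal balls nor a doubling parent, so $\widetilde{B}_j$ must be constructed. The paper does this in Lemma \ref{Lm 3.5}, building chains with $B_k^{[2]}\subset B_{k+1}$, $\mu(B_{k+1})\ge 2\mu(B_k)$ and $\Delta_T(B_k^{[2]},B_{k+1})\lesssim\mathscr{C}(T)$. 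These chains use (\(\Psi\)-BOO-I) for balls with $B^\P=B$, and $\mathbb{W}_{\Psi,\psi,\lambda}$ together with (\(\Psi\)-BOO-II) via Lemma \ref{Lm 3.4}(a). Lemma \ref{Lm 3.6} then stops at the first $k$ with $B_{k+1}^\P\not\subset F$.

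You have the two conditions interchanged. (\(\Psi\)-BOO-I) is a size bound comparing the truncation at $B_0^\P$ with the truncation at one slightly larger ball, not an oscillation bound on $B_0^*\setminus B_j^*$.

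For sparseness, the stopping balls of $A$ only lie in $A^\P$ and overlap one another, so the sets $Q\setminus\bigcup(\text{children})$ need not be pairwise disjoint. The paper instead sorts balls by scale $r(B)=[\log_{\mathscr{C}_0^2}\mu(B)]$, prunes, and extracts a disjoint subfamily at each scale. The two families are the odd and even scales, not ``two sides of $B$'' (Lemmas \ref{Lm 3.9} and \ref{Lm 3.10}), and this is where $\lambda>3\mathscr{C}_0^6$ is needed.

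Finally, the obstacle you single out is not the real one. The localized measure bound is read off directly at the threshold $\Psi^{-1}(\lambda\mathscr{C}_0^3\psi(\mu(B_0^{[3]})))$, and $(\Phi,\phi)\in E_{mb}$ is used to absorb that $\Psi^{-1}$-factor into the constant, not to obtain a weak bound for the maximal operator.
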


Building upon Theorem \ref{main}, we proceed to establish the stated norm estimates. The argument unfolds in three natural steps. First, a direct application of Definition \ref{B-condition} to a ball-basis \(\mathfrak{B}\) satisfying the Besicovitch \(\mathfrak{N}\)-condition yields the pivotal pointwise control:
\begin{align}\label{ine main2}
	\mathcal{A}_{\mathcal{S}, \Phi,\phi}f(x) \lesssim \mathfrak{N} \cdot \mathcal{M}_{\mathfrak{B},\Phi,\phi}f(x), \quad a.e. \; x \in X.
\end{align}
Next, we employ the monotonicity inherent to the Orlicz-Morrey norm, which gives that
\begin{align}\label{ine main3}
	\text{if } f(x) \leq g(x), \text{ then } \|f\|_{L^{(\Psi, \psi)}(X)} \leq \|g\|_{L^{(\Psi,\psi)}(X)}, \quad a.e. \; x \in X.
\end{align}
Finally, synthesizing Theorem \ref{main} with the two preceding inequalities, (\ref{ine main2}) and (\ref{ine main3}), leads us to the desired norm estimate, encapsulated in the following theorem.
\begin{theorem}\label{main2}
	Let \((X,\,\mathfrak{M},\,\mu)\) be a measure space endowed with a ball-basis \(\mathfrak{B}\) and let \(\lambda > 3\mathscr{C}_0^6\). Assume that \(\mathfrak{B}\) satisfies the Besicovitch \(\mathfrak{N}\)-condition, \((\Phi,\Psi,\phi,\psi)\in\mathscr{Y} \uplus \mathscr{G}\), and \((\Phi, \phi) \in E_{mb}\). If \(T\) is a \(\mathbb{B}\)-valued \(\Psi\)-BOO and \(T \in \mathbb{W}_{\Psi, \psi, \lambda}\), then
	\begin{align*}
		\|T\|_{L^{(\Psi, \psi)}(X) \to L^{(\Psi,\psi)}(X)} \lesssim \mathscr{C}(T) \cdot \mathscr{K} \cdot \mathfrak{N}.
	\end{align*}
	Here, \(\mathscr{K}\) is the constant appearing in (\ref{K}).
\end{theorem}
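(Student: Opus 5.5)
The plan is to follow the three-step outline preceding the statement, with one extra ingredient: the boundedness of the Orlicz maximal operator \(\mathcal{M}_{\mathfrak{B},\Phi,\phi}\) on \(L^{(\Psi,\psi)}(X)\). We expect this boundedness to be where the constant \(\mathscr{K}\) of (\ref{K}) enters, and to be available from the hypothesis \((\Phi,\Psi,\phi,\psi)\in\mathscr{Y}\uplus\mathscr{G}\).

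First fix \(f\in L^{(\Psi,\psi)}(X)\) and apply Theorem \ref{main}. This gives a family \(\mathcal{S}\) that is a union of two \(\frac{1}{2\mathscr{C}_0^3}\)-sparse families, with
\[
Tf(x)=\|\mathscr{T}f(x)\|_{\mathbb{B}}\lesssim \mathscr{C}(T)\,\mathcal{A}_{\mathcal{S},\Phi,\phi}f(x)\quad\text{a.e. } x\in B .
\]
This holds for each \(B\in\mathfrak{B}\). Since a ball-basis exhausts \(X\) by a countable increasing union of balls (Definition \ref{ball-basis}), the bound extends to a.e. \(x\in X\). One point needs care here: the family \(\mathcal{S}\) may depend on \(B\). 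We handle this by working with an increasing sequence of balls \(B_k\uparrow X\). For each \(k\) we bound \(Tf\) on \(B_k\) by the corresponding sparse operator, and then in turn by the maximal operator, which does not depend on \(k\). Passing to the limit then uses only monotone convergence in the Orlicz-Morrey norm, i.e. the Fatou property.

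Second, we establish (\ref{ine main2}). For each sparse family, write \(\mathcal{A}_{\mathcal{S},\Phi,\phi}f(x)=\sum_{A\in\mathcal{S}}\|f\|_{\Phi,\phi,A}\mathbf{1}_A(x)\). For every \(A\ni x\) we have \(\|f\|_{\Phi,\phi,A}\le \mathcal{M}_{\mathfrak{B},\Phi,\phi}f(x)\). The Besicovitch \(\mathfrak{N}\)-condition (Definition \ref{B-condition}) bounds the overlap \(\sum_{A\in\mathcal{S}}\mathbf{1}_A\) by \(\mathfrak{N}\) for each of the two families, possibly after sparsity-based reduction as in Definition \ref{B-condition}. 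This yields \(\mathcal{A}_{\mathcal{S},\Phi,\phi}f\lesssim \mathfrak{N}\,\mathcal{M}_{\mathfrak{B},\Phi,\phi}f\) a.e. We expect this step to be the main obstacle. A raw sparse family need not have bounded overlap, so we must rely on exactly how Definition \ref{B-condition} is formulated. The first attempt will be to use it directly as a bounded-overlap statement for the sparse collections produced in Theorem \ref{main}. Failing that, we will use the disjoint sets \(E_A\subset A\) with \(\mu(E_A)\ge\frac{1}{2\mathscr{C}_0^3}\mu(A)\) to dominate the sum.

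Third, we combine the two bounds. Monotonicity (\ref{ine main3}) gives
\[
\|Tf\|_{L^{(\Psi,\psi)}}\lesssim \mathscr{C}(T)\,\mathfrak{N}\,\|\mathcal{M}_{\mathfrak{B},\Phi,\phi}f\|_{L^{(\Psi,\psi)}} .
\]
We then invoke the boundedness \(\|\mathcal{M}_{\mathfrak{B},\Phi,\phi}f\|_{L^{(\Psi,\psi)}}\le\mathscr{K}\|f\|_{L^{(\Psi,\psi)}}\) from (\ref{K}) under the \(\mathscr{Y}\uplus\mathscr{G}\) and \(E_{mb}\) assumptions. Taking the supremum over \(\|f\|_{L^{(\Psi,\psi)}}\le1\) gives the claimed bound \(\mathscr{C}(T)\cdot\mathscr{K}\cdot\mathfrak{N}\).
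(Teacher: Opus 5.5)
You follow the paper's own outline: Theorem \ref{main}, then (\ref{ine main2}), then monotonicity and a bound for $\mathcal{M}_{\mathfrak{B},\Phi,\phi}$ carrying $\mathscr{K}$. The problem is that your second step is not merely unproved. The pointwise inequality $\mathcal{A}_{\mathcal{S},\Phi,\phi}f\lesssim\mathfrak{N}\,\mathcal{M}_{\mathfrak{B},\Phi,\phi}f$ does not follow from the Besicovitch condition plus sparseness, and it is false for sparse families in general.

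Here is a counterexample. Take $X=[0,1)$ with Lebesgue measure and $\mathfrak{B}$ the dyadic subintervals, with $I^{\P}$ the dyadic parent of $I$ and $[0,1)^{\P}=[0,1)$. This is a ball-basis with $\mathscr{C}_0=2$, and it satisfies the Besicovitch condition with $\mathfrak{N}=1$ (keep the maximal intervals of any family). The chain $\mathcal{S}=\{[0,2^{-k}):k\ge 0\}$ is $\tfrac12$-sparse via $E_{[0,2^{-k})}=[2^{-k-1},2^{-k})$. For $f\equiv 1$, $\Phi(t)=t$, $\phi\equiv 1$ (a pair in $E_{mb}$) one has $\mathcal{A}_{\mathcal{S},\Phi,\phi}f(x)=\lceil\log_2(1/x)\rceil\to\infty$ as $x\to 0^{+}$, while $\mathcal{M}_{\mathfrak{B},\Phi,\phi}f\equiv 1$.

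Neither of your routes gets around this.
Definition \ref{B-condition} only provides some subfamily with the same union and overlap at most $\mathfrak{N}$. It says nothing about the overlap of $\mathcal{S}$ itself. Discarding balls also changes the operator, since each ball carries its own coefficient $\|f\|_{\Phi,\phi,A}$; here $\{[0,1)\}$ has the same union and keeps only one term.
The disjoint sets $E_A$ give only integrated control. For $\beta$-sparse $\mathcal{S}$ and $g\ge 0$, $\int_X\mathcal{A}_{\mathcal{S},\Phi,\phi}f\,g\,d\mu\le\beta^{-1}\sum_{A\in\mathcal{S}}\|f\|_{\Phi,\phi,A}\|g\|_{A}\,\mu(E_A)\le\beta^{-1}\int_X\mathcal{M}_{\mathfrak{B},\Phi,\phi}f\cdot M_{\mathfrak{B}}g\,d\mu$, which is not a pointwise statement.

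The paper's outline asserts (\ref{ine main2}) as a ``direct application'' of Definition \ref{B-condition} with no further argument, so you cannot import it from there. Nor can you fall back on a norm bound for $\mathcal{A}_{\mathcal{S},\Phi,\phi}$ valid for general sparse families. The conditions $(\Phi,\phi)\in E_{mb}$ and $\psi\le C(\psi,\phi)\phi$ force $\sup_{B}\psi(\mu(B))<\infty$, hence $L^{(\Psi,\psi)}(X)\hookrightarrow L^\infty(X)$ by the density property. The same chain, with $\Psi(t)=t$ and $\psi\equiv1$, shows that sparse operators need not preserve $L^\infty$.

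Your third step has a separate gap. The constant $\mathscr{K}$ in (\ref{K}) is an embedding constant, $\|f\|_{L^{(\Phi,\phi)}}\le\mathscr{K}\|f\|_{L^{(\Psi,\psi)}}$. It yields only the $L^\infty$-type bound $\mathcal{M}_{\mathfrak{B},\Phi,\phi}f\le\mathscr{K}\|f\|_{L^{(\Psi,\psi)}}$. Converting that into an $L^{(\Psi,\psi)}$ bound needs $\sup_{B\in\mathfrak{B}}\|1\|_{\Psi,\psi,B}=\sup_{B}1/\Psi^{-1}(\psi(\mu(B)))<\infty$, which is a lower bound on $\psi$. Neither $E_{mb}$ (a condition on $(\Phi,\phi)$) nor $\psi\le C(\psi,\phi)\phi$ (which points the wrong way) supplies it.

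This is exactly the move made at the end of Section \ref{Section 2.1}, and it fails. On $\mathbb{R}^n$ with Euclidean balls, take $\Phi(t)=\Psi(t)=t$, $\phi\equiv 1$, $\psi(t)=\min\{1,1/t\}$. These satisfy $(\Phi,\Psi,\phi,\psi)\in\mathscr{Y}\uplus\mathscr{G}$ and $(\Phi,\phi)\in E_{mb}$, yet $\|f\|_{L^{(\Psi,\psi)}}=\max\{\|f\|_{L^\infty},\|f\|_{L^1}\}$. The operator $\mathcal{M}_{\mathfrak{B},\Phi,\phi}=M_{\mathfrak{B}}$ is unbounded on this space, since $M_{\mathfrak{B}}\mathbb{I}_{B(0,1)}(x)\ge|x|^{-n}$ for $|x|\ge 1$ is not integrable. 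So both links of the chain need genuinely new arguments, and very likely extra hypotheses.
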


The principal contributions of this study are summarized as follows:

\begin{itemize*}
	
	\item Our framework of Banach-valued \(\Psi\)-BOOs unifies the Orlicz-Morrey maximal operator, \(\omega\)-Calder\'on-Zygmund operators, as well as operators beyond Calder\'on-Zygmund theory. Notably, this framework incorporates the Orlicz-Morrey maximal operator into the theory of singular integrals, while categorizing the intrinsic square operators and Carleson-type operators as examples that extend beyond classical Calder\'on-Zygmund theory.

	\item We develop a framework of abstract Orlicz-Morrey spaces endowed with a ball-basis that is both broad and unifying. This framework admits as special cases a range of abstract spaces (for example, abstract Orlicz spaces and abstract generalized Morrey spaces; cf. Appendix \ref{Section A2}) and also recovers key classical examples. The latter include central Orlicz-Morrey spaces \cite{COM-2015}, their counterparts on spaces of homogeneous type \cite{OM-DGS-2019}, and local Orlicz-Morrey spaces \cite{LOM}. The generality of the Orlicz-Morrey setting ensures that other families, such as Orlicz-type spaces and Morrey-type spaces, are also contained within this framework.

	\item Our Theorem \ref{main} establishes a pointwise estimate (a local characterization) for the \(\Psi\)-BOO in abstract Orlicz-Morrey spaces endowed with a ball-basis. This result is then used to derive Theorem \ref{main2}. The methodology here differs from classical proofs \cite{OM-Nakai-2004,OM-Nakai-2006,OM-Nakai2008,OM-DGS-2015}, as the principal difficulty stems from the lack of geometric structure, such as the one inherent in \((\mathbb{R}^n,\mathcal{L}^n)\), in general measure spaces. Consequently, the operator boundedness results obtained differ from their classical counterparts. Nevertheless, the required inequalities are successfully established via Theorems \ref{main} and \ref{main2}.

	\item As an application, we provide several classical examples within Orlicz-Morrey spaces in Sect. \ref{Section 2}: the Orlicz-Morrey maximal operator, \(\omega\)-Calder\'on-Zygmund operators, intrinsic square operators, and Carleson-type operators. Utilizing the results from Theorem \ref{main} and Theorem \ref{main2}, our framework recovers the results presented in \cite{OM-Nakai-2004,OM-Nakai-2006,OM-Nakai2008,OM-DGS-2015}.
	
\end{itemize*}

\subsection{Basic assumptions and related concepts}\label{Section 1.3}

Our approach is to transcend the limitations of Euclidean geometry. We employ a generalized metric structure and replace the conventional geometric building blocks with a flexible "ball-basis"—a family of measurable sets. This leads to a unified analytic framework designed for broad application.

\begin{definition}{\cite{Caomingming2023}}\label{ball-basis}
Let \((X,\,\mathfrak{M},\,\mu)\) be a measure space. A family of sets \(\mathfrak{B} \subset \mathfrak{M}\) is called a {\sffamily ball-basis} if it satisfies the following properties:  
	\begin{itemize*}
		\item[\rm B1]~~(Basis property): Every \(B \in \mathfrak{B}\) satisfies \(0 < \mu(B) < \infty\).  

		\item[\rm B2]~~(Covering property): For any two points \(x,\,y \in X\), there exists some \(B \in \mathfrak{B}\) containing both \(x\) and \(y\).  

		\item[\rm B3]~~(Approximation property): For every measurable set \(E \in \mathfrak{M}\) and every \(\varepsilon > 0\), one can find a sequence of balls \(\{B_k\} \subset \mathfrak{B}\) (finite or infinite) such that \(\mu(E\Delta\bigcup_kB_k)\textless\varepsilon\).

		\item[\rm B4]~~(Ball-hull property): For each \(B \in \mathfrak{B}\) there exists a ball \(B^\P \in \mathfrak{B}\), called the {\sffamily ball-hull} of \(B\), satisfying
		\[
		\bigcup_{\substack{B' \in \mathfrak{B}: \, B' \cap B \neq \varnothing \\ \mu(B') \leq 2\mu(B)}} B' \subset B^\P,
		\]
		together with the implication  
		\[
		A \subset B \;\Longrightarrow\; A^\P \subset B^\P,
		\]
		and the measure estimate  
		\[
		\mu(B^\P) \leq \mathscr{C}_0 \mu(B),
		\]
		where \(\mathscr{C}_0 \geq 1\) is a fixed constant.  
	\end{itemize*}
\end{definition}

In this case, we say that \((X,\,\mathfrak{M},\,\mu)\) is a measure space endowed with a ball-basis \(\mathfrak{B}\).

\begin{remark}\label{example ball-basis}
	The following examples are some common geometric structures, all of which are ball-basis.
	\begin{itemize}[leftmargin=1.5em]
		
		\item \textbf{Euclidean ball}: In \(\mathbb{R}^n\), consider the family of balls \(B(x_k,\,r) = \left\{x \in \mathbb{R}^n : \|x_k - x\|_{l^2} \leq r,\; r>0 \right\}\), denoted by \[\mathfrak{B}_E = \left\{B(x_k,\,r): x_k \in \mathbb{R}^n,\; k=1,2,\dots\right\}.\]
		Then \(\mathfrak{B}_E\) is a ball-basis in \((\mathbb{R}^n,\mathcal{L}^n)\). Here \(\|x - y\|_{l^2} = \left( \sum_{i=1}^n (x_i - y_i)^2 \right)^{1/2}\), with \(x = (x_1,\dots, x_n),\; y = (y_1,\dots, y_n) \in \mathbb{R}^n\). In fact, conditions B1, B2 and B3 are obvious. For \(\ell \geq 1+2^{1+1/n}\), take \(\mathscr{C}_0 = \ell^n\). Then for every \(B \in \mathfrak{B}_E\), we have \(B^{\P} = \ell B\), so condition B4 is satisfied.

		\item \textbf{Dyadic lattice}: In the space \((\mathbb{R}^n,\mathcal{L}^n)\), a dyadic lattice \(\mathfrak{D}\) is a ball-basis. A dyadic lattice \(\mathfrak{D}\) in \(\mathbb{R}^n\) is a collection of cubes \(Q\) satisfying the following properties:
		\begin{itemize}
			\item[\(-\)] {\it Closure property}: Whenever a cube \(Q\) belongs to \(\mathfrak{D}\), so do all cubes obtained by successively halving its sides (its dyadic descendants). Denoting the descendant set by \(D(Q)\), this means \(D(Q) \subset \mathfrak{D}\).
			
			\item[\(-\)] {\it Existence property}: Any two cubes \(Q_1, \, Q_2 \in \mathfrak{D}\) admit a cube \(Q \in \mathfrak{D}\) of which both are dyadic descendants, meaning that \(Q_1, \, Q_2 \in D(Q)\).
			
			\item[\(-\)] {\it Covering property}: For each compact set \(K \subset \mathbb{R}^n\), one can find a cube \(Q \in \mathfrak{D}\) that contains \(K\) (so \(K \subset Q\)).
		\end{itemize}

	\end{itemize}
	
	However, it should be noted that the following example does not meet the requirements of a ball-basis.
			
	\vspace{4pt}
			
	\begin{itemize}[leftmargin=1.5em]
					
			\item \textbf{Zygmund rectangles}: Investigate the family \(\mathfrak{B}\) comprising all Zygmund rectangles in \(\mathbb{R}^3\), defined to be rectangles with sides parallel to the coordinate axes and side lengths proportional to \(s\), \(t\), and \(st\) for some \(s,\,t > 0\). This family does not constitute a ball-basis for \((\mathbb{R}^3,\,\mathcal{L}^3)\). For a counterexample, one may take the cube
			\[
			B = [0,1)^3,~~~\text{and}~~~B_k = [0, 2^{k+1/2}) \times [0, 2^{-k}) \times [0, 2^{1/2}),\,k = 0,1,2,....
			\]
			Each \(B_k\) belongs to \(\mathfrak{B}\) and satisfies \(B_k \cap B \neq \varnothing\) and \(|B_k| = 2|B|\). However, no Zygmund rectangle in \(\mathfrak{B}\) can contain the union \(\bigcup_{k=0}^\infty B_k\). Hence condition B4 of a ball-basis is violated.

			\item \textbf{Centrally shrinking cube sequence}: Consider the subset of \(\mathbb{R}^n\) given by the cube \[Q = [a, b]^n = \{(x_1, \dots, x_n) : x_i \in [a, b],\, i=1,2,\dots, n \},\] where \(a < b\), and the corresponding space \((Q,\mathcal{L}^n)\). The centrally shrinking cube sequences are defined as
			\[Q_k=\left[a+\sum_{l=1}^k\frac{b-a}{2^{l+1}},b-\sum_{l=1}^k\frac{b-a}{2^{l+1}}\right]^n,\,\,\widetilde{Q}_k=\left[a+\frac{b-a}{2^{k+1}},b-\frac{b-a}{2^{k+1}}\right]^n,\quad k=1,\,2,\,....\]
			The family \(\mathfrak{Q}=\{Q_k,\,\widetilde{Q}_k:\,k=1,\,2,\,...\}\) does not satisfy condition B3 of a ball-basis. Therefore, \(\mathfrak{Q}\) is not a ball-basis for \((Q,\mathcal{L}^n)\).
					
		\end{itemize}
\end{remark}

In order to introduce a new class of bounded oscillation operators, the necessary notation is first established. Let \((X,\,\mathfrak{M},\,\mu)\) be a measure space endowed with a ball-basis \(\mathfrak{B}\), and let \(\Phi \in \mathscr{Y}\) and \(\phi \in \mathscr{G}\) be given as in (\ref{Y function}) and (\ref{G function}), respectively. We denote
\begin{align*}
	\langle \|f\|\rangle_{\Phi,\phi,B}=\sup_{B'\in\mathfrak{B}:B' \supset B}\|f\|_{\Phi,\phi,B'}, 
\end{align*}
where \(\|f\|_{\Phi,\phi,B}\) is the generalized Luxemburg norm as defined in Appendix \ref{Section A2}.

\begin{remark}
	Below we introduce simplified notations for specific choices of \(\Phi\) and \(\phi\).
	\begin{itemize*}
		\item When \(\Phi(t)=t^r\), we denote \(\langle \|f\|\rangle_{\Phi,\phi,B}:=\langle \|f\|\rangle_{r,\phi,B}\) and \(\|f\|_{\Phi,\phi,B}=\|f\|_{r,\phi,B}\).
		\begin{itemize*}
			\item If \(\phi(t)=1\), we further write \(\langle \|f\|\rangle_{r,\phi,B}:=\langle \|f\|\rangle_{r,B}\) and \(\|f\|_{r,\phi,B}=\|f\|_{r,B}\).
		\end{itemize*}
		
		\item When \(\Phi(t)=t\), we denote \(\langle \|f\|\rangle_{\Phi,\phi,B}:=\langle \|f\|\rangle_{\phi,B}\) and \(\|f\|_{\Phi,\phi,B}=\|f\|_{\phi,B}\).
		\begin{itemize*}
			\item If \(\phi(t)=1\), we further write \(\langle \|f\|\rangle_{\phi,B}:=\langle \|f\|\rangle_{B}\) and \(\|f\|_{\phi,B}=\|f\|_{B}\).
		\end{itemize*}
	\end{itemize*}
	Here \(\|f\|_{r,\phi,B}\), \(\|f\|_{r,B}\), \(\|f\|_{\phi,B}\), and \(\|f\|_{B}\) are respectively defined as:
	\begin{figure}[H]
		\centering
		\begin{tikzpicture}[
			node distance=2cm,
			box/.style={draw, rectangle, minimum width=1.5cm, minimum height=1cm},
			arrow/.style={-Stealth, thick}
			]
			\node (1) {\(\displaystyle\|f\|_{r,\phi,B} = \left(\frac{1}{\mu(B)\phi(\mu(B))} \int_{B} |f(x)|^r d\mu\right)^{\frac{1}{r}}\)};
			\node[right=2cm of 1] (2) {\(\displaystyle\|f\|_{r,B} = \left(\frac{1}{\mu(B)} \int_{B} |f(x)|^r d\mu\right)^{\frac{1}{r}}\)};
			\node[below=1.5cm of 1] (3) {\(\displaystyle\|f\|_{\phi,B} = \frac{1}{\mu(B)\phi(\mu(B))} \int_{B} |f(x)| d\mu\)};
			\node at (2 |- 3) (4) {\(\displaystyle\|f\|_{B} = \frac{1}{\mu(B)} \int_{B} |f(x)| d\mu\)};

			\draw[arrow] (1) -- node[above] {\(\phi(t)=1\)} (2);
			\draw[arrow] (3) -- node[above] {\(\phi(t)=1\)} (4);
			\draw[arrow] (1) -- node[left] {\(r=1\)} (3);
			\draw[arrow] (2) -- node[right] {\(r=1\)} (4);
			
		\end{tikzpicture}
		\caption{The relationships between \(\|f\|_{r,\phi,B}\)}
	\end{figure}
\end{remark}

\begin{definition}\cite{CFA}
	Let \((X,\,\mathfrak{M},\,\mu)\) be a measure space. Denote by \(\mathscr{L}_0(X,\,\mathfrak{M},\,\mu)\) the set of all real-valued measurable functions on \(X\), and let \(\mathscr{U}(X,\,\mathfrak{M},\,\mu)\) be a suitable linear subspace of \(\mathscr{L}_0(X,\,\mathfrak{M},\,\mu)\). Consider an operator \(T: \mathscr{U}(X,\,\mathfrak{M},\,\mu) \to \mathscr{L}_0(X,\,\mathfrak{M},\,\mu)\).
	\begin{itemize*}
		\item \(T_l\) is said to be {\sffamily linear} if for all \(f,\,g\in \mathscr{U}(X,\,\mathfrak{M},\,\mu)\) and all \(\alpha \in \mathbb{R}\),  
		\begin{align*}
			T_l(\alpha \; f)(x) = \alpha \; T_lf(x),\quad \text{and}\quad T_l(f + g) = T_l(f) + T_l(g).
		\end{align*}
		
		\item \(T_{sl}\) is called {\sffamily sublinear} if for all \(f,\,g\in \mathscr{U}(X,\,\mathfrak{M},\,\mu)\) and all \(\alpha \in \mathbb{R}\),
		\begin{align*}
			|T_{sl}(\alpha \; f)(x)| = |\alpha| \; |T_{sl}f(x)|,\quad \text{and}\quad |T_{sl}(f + g)(x)| \leq |T_{sl}f(x)| + |T_{sl}g(x)|.
		\end{align*}
		
		\item A sublinear operator \(T_{sl}\) is said to be {\sffamily linearizable} if there exist a Banach space \(\mathbb{B}\) and a \(\mathbb{B}\)-valued linear operator \(\mathscr{T}_\mathbb{B}: \mathscr{U}(X, \mathfrak{M}, \mu) \to \mathscr{L}_0(X,\,\mathfrak{M},\,\mu;\, \mathbb{B})\) such that for every \(f \in \mathscr{U}(X,\,\mathfrak{M},\,\mu)\),
		\begin{align*}
			T_{sl}f(x) = \left\|\mathscr{T}_\mathbb{B}f(x)\right\|_{\mathbb{B}} \quad \text{for } \mu\text{-almost every } x \in X.
		\end{align*}
	\end{itemize*}
\end{definition}

The following property was established in \cite[Proposition 3.2]{OM-Nakai2008}: Let \((X,\,\mathfrak{M},\,\mu)\) be a measure space. For Young functions \(\Phi,\,\Psi\in\mathscr{Y}\) and \(\phi,\,\psi\in\mathscr{G}\), under the assumptions  
\begin{align}\label{Y-G condition}
	\Phi(t) \leq \Psi(C(\Phi,\Psi)t) \quad \text{and} \quad \psi(t) \leq C(\psi,\phi)\phi(t),
\end{align}
the following hold:
\begin{itemize}
	\item \textbf{Inclusion relation:} \[L^{(\Phi,\phi)} \; \supset L^{(\Psi,\phi)} \; \supset L^{(\Psi,\psi)}\].
	
	\item \textbf{Norm estimate:} \[\|f\|_{L^{(\Phi,\phi)}} \; \leq C(\Phi,\Psi) \, \|f\|_{L^{(\Psi,\phi)}} \;\leq C(\Phi,\Psi) \max\{1, \, C(\psi,\phi)\} \, \|f\|_{L^{(\Psi,\psi)}}.\]
\end{itemize}
Setting
\begin{align}\label{K}
	\mathscr{K} :=C(\Phi,\Psi)\max\{1,\,C(\psi,\phi)\}>0,
\end{align}
we obtain the bound \(\|f\|_{L^{(\Phi,\phi)}} \leq \mathscr{K}  \|f\|_{L^{(\Psi,\psi)}}\).

\begin{definition}\label{assumption1}
	A quadruple \((\Phi,\Psi,\phi,\psi)\) is said to satisfy the \(\mathscr{Y} \uplus \mathscr{G}\)-condition, written \((\Phi,\Psi,\phi,\psi)\in\mathscr{Y} \uplus \mathscr{G}\), provided that relation (\ref{Y-G condition}) is fulfilled, where \(\Phi,\,\Psi\in\mathscr{Y}\) and \(\phi,\,\psi\in\mathscr{G}\).
\end{definition}

\begin{definition}\label{BOO}
	Let \((X,\,\mathfrak{M},\,\mu)\) be a measure space endowed with a ball-basis \(\mathfrak{B}\), and let \((\Phi,\Psi,\phi,\psi) \in \mathscr{Y} \uplus \mathscr{G}\). Given a Banach space \(\mathbb{B}\), we say that an operator \(T\) is a {\sffamily \(\mathbb{B}\)-valued \(\Psi\)-bounded oscillation operator} {\rm (\(\Psi\)-BOO)} with respect to \(\mathfrak{B}\) (and \((\Phi,\Psi,\phi,\psi)\in\mathscr{Y} \uplus \mathscr{G}\)) if there exist constants \(\mathscr{C}_1(T), \mathscr{C}_2(T) \in (0, \infty)\) and a \(\mathbb{B}\)-valued linear operator \(\mathscr{T}\) satisfying \[Tf(x) = \|\mathscr{T}f(x)\|_\mathbb{B} \quad (\forall x \in X),\] or, in the case \(\mathbb{B} = \mathbb{R}\), a single real-valued (sub)linear operator \(\mathscr{T} := \{T\}\), such that for every \(f \in L^{(\Psi,\psi)}(X)\subset L^{(\Phi,\phi)}(X)\) the following two conditions hold:
	
	\begin{enumerate}[label=(\roman*), leftmargin=*, align=left, itemsep=0.5\baselineskip]
		\item[{\rm (\(\Psi\)-BOO-I)}] For every \(B_0 \in \mathfrak{B}\) with \(B_0^\P \subsetneq X\), there exists a ball \(B \in \mathfrak{B}\) with \(B \supsetneq B_0\) such that
		\begin{align*}
			\sup_{x \in B_0} \left\| \mathscr{T}(f\,\mathbb{I}_{B^\P})(x) - \mathscr{T}(f\,\mathbb{I}_{B_0^\P})(x) \right\|_{\mathbb{B}}
			\leq \mathscr{C}_1(T) \,\|f\|_{\Phi,\phi,B^\P}.
		\end{align*}
		
		\item[{\rm (\(\Psi\)-BOO-II)}] For every ball \(B \in \mathfrak{B}\),
		\begin{align*}
			\sup_{x,x' \in B} \left\| \left(\mathscr{T}f - \mathscr{T}(f \,\mathbb{I}_{B^\P})\right)(x) - \left(\mathscr{T}f - \mathscr{T}(f\, \mathbb{I}_{B^\P})\right)(x') \right\|_{ \mathbb{B}} \leq \mathscr{C}_2(T) \, \langle \|f\|\rangle_{\Phi,\phi,B}.
		\end{align*}
	\end{enumerate}
	When \(T\) is real-valued, which is equivalent to \(\mathbb{B} = \mathbb{R}\), we will omit the reference to \(\mathbb{B}\) and the norm \(\|\cdot\|_{\mathbb{B}}\).
\end{definition}

\begin{remark}
	We note that the average integral in function spaces describes the average oscillation behavior of operators within local regions, whereas the maximal operator controls the upper bounds of these averages. We refine the control constants for Karagulyan-type bounded oscillation operators into the form of the Luxemburg norm. When \(\Phi(t)=\Psi(t)=t^r\), \(\phi(t)=1\), and \(\psi(t)=1/t\) (with \(r,\,t\geq1\)), this corresponds exactly to \cite[Definition 1.3]{Abstract2019} and \cite[Definition 1.4]{Caomingming2023} (for \(m=1\)).
\end{remark}

\begin{definition}\label{Maximal}
	Let \((X,\,\mathfrak{M},\,\mu)\) be a measure space endowed with a ball-basis \(\mathfrak{B}\). Given \(\Phi\in\mathscr{Y}\) and \(\phi\in\mathscr{G}\), the Orlicz-Morrey maximal operator is defined as
	\begin{align}\label{O-M maximal operator}
		\mathcal{M}_{\mathfrak{B},\Phi,\phi}f(x) := \sup_{B\in\mathfrak{B}:\,x\in B}\|f\|_{\Phi,\phi,B},
	\end{align}
	for \(x \in \bigcup_{B\in\mathfrak{B}}B \subset X\) and we set \(\mathcal{M}_{\mathfrak{B},\Phi,\phi}f(x)=0\) for \(x \notin \bigcup_{B\in\mathfrak{B}}B\).
\end{definition}

\begin{remark}
	
	The Orlicz-Morrey maximal operator \(\mathcal{M}_{\mathfrak{B}, \Phi, \phi}\) in (\ref{O-M maximal operator}) is clearly a sublinear operator. When \(\Phi(t)\) and \(\phi(t)\) are specified as concrete functions, we simplify the notation for the above Orlicz-Morrey maximal operator \(\mathcal{M}_{\mathfrak{B}, \Phi, \phi}\) by using the symbols given in Fig.~\ref{M-picture}.

	\begin{figure}[H]
		\centering
		\begin{tikzpicture}[
			node distance=2cm,
			box/.style={draw, rectangle, minimum width=1.5cm, minimum height=1cm},
			arrow/.style={-Stealth, thick}
			]
			\node (1) {\(\mathcal{M}_{\mathfrak{B},\Phi,\phi}\,\,\)};
			\node[right=1.8cm of 1] (2) {\(\,\,\mathcal{M}_{\mathfrak{B},\Phi}\,\,\)};
			\node[right=1.8cm of 2] (3) {\(\,\,\mathcal{M}_{\mathfrak{B},r}\,\,\)};
			\node[right=1.8cm of 3] (4) {\(\,\,M_{\mathfrak{B}}\)};
			
			\draw[arrow] (1) -- node[above] {(1)} (2);
			\draw[arrow] (2) -- node[above] {(2)} (3);
			\draw[arrow] (3) -- node[above] {(3)} (4);
		\end{tikzpicture}
		\caption{Different forms of the maximal operator \(\mathcal{M}_{\mathfrak{B},\Phi,\phi}\)\label{M-picture}}
	\end{figure}
	
	\begin{itemize*}
		\item[(1)] When \(\phi(t)=1\) (cf. \cite[Definition 1.1]{M_Phi1995}),
		\[
		\mathcal{M}_{\mathfrak{B},\Phi,\phi}:=\mathcal{M}_{\mathfrak{B},\Phi}
		=\sup_{B\in\mathfrak{B},\,x\in B} \|f\|_{\Phi,B}.
		\]
		
		\item[(2)] When \(\phi(t)=1\) and \(\Phi(t)=t^r\) (as in \cite[formula (4.1)]{Abstract2019}),
		\[
		\mathcal{M}_{\mathfrak{B},\Phi,\phi}:=\mathcal{M}_{\mathfrak{B},r}
		=\sup_{B\in\mathfrak{B},\,x\in B} 
		\left(\frac{1}{\mu(B)} \int_B |f|^r \, d\mu\right)^{1/r}.
		\]
		
		\item[(3)] When \(\phi(t)=1\) and \(\Phi(t)=t\) (following \cite[Definition 2.1.1, Definition 2.1.3]{CFA}),
		\[
		\mathcal{M}_{\mathfrak{B},\Phi,\phi}:=M_{\mathfrak{B}}
		=\sup_{B\in\mathfrak{B},\,x\in B} \frac{1}{\mu(B)} \int_B |f| \, d\mu.
		\]
	\end{itemize*}
\end{remark}

\begin{definition}\cite{Abstract2019}\label{sparse}
	A collection \(\mathcal{S} \subset \mathfrak{B} \) is called {\sffamily \(\beta\)-sparse}, with \(\beta \in (0, 1)\), if for each \( B \in \mathcal{S} \) there exists a subset \( E_B \subset B \) satisfying the following two properties:
	\begin{itemize*}
		\item \(\mu(E_B) \geq \beta \mu(B)\);
		\item the family \(\{E_B : B \in \mathcal{S}\}\) is pairwise disjoint.
	\end{itemize*}
\end{definition}

Given a sparse family \(\mathcal{S}\), \(\Phi \in \mathscr{Y}\) and \(\phi \in \mathscr{G}\), we define a linear operator by  
\begin{align}\label{sparse operator}
	\mathcal{A}_{\mathcal{S},\Phi,\phi}f(x) := \sum_{B \in \mathcal{S}} \|f\|_{\Phi,\phi,B} \cdot \mathbb{I}_B(x),
\end{align}
and refer to \(\mathcal{A}_{\mathcal{S},\Phi,\phi} \) as a {\sffamily sparse operator}. When \(\phi(t)=1\), we have the following simplified notations:
\begin{figure}[H]
	\centering
	\begin{tikzpicture}[
		node distance=2cm,
		box/.style={draw, rectangle, minimum width=1.5cm, minimum height=1cm},
		arrow/.style={-Stealth, thick}
		]
		
		\node (1) {\(\mathcal{A}_{\mathcal{S},\Phi,\phi}\,\)};
		\node[right=4cm of 1] (2) {\(\,\mathcal{A}_{\mathcal{S},r}\,\)};
		\node[right=4cm of 2] (3) {\(\,\mathcal{A}_{\mathcal{S}}\)};
		
		\draw[arrow] (1) -- node[above] {\(\phi(t)=1,\,\Phi(t)=t^r\)} (2);
		\draw[arrow] (2) -- node[above] {\(\phi(t)=1,\,\Phi(t)=t\)} (3);
	\end{tikzpicture}
	\caption{Different forms of the sparse operator \(\mathcal{A}_{\mathcal{S},\Phi,\phi}\)\label{A-picture}}
\end{figure}

Inspired by the work of Lerner and Ombrosi \cite{Lerner2020}, we introduce the key condition required in this paper, namely the \(\mathbb{W}_{\Psi, \psi}\) condition, which is defined as follows.

\begin{definition}\label{weak condition}
	Let \((X,\,\mathfrak{M},\,\mu)\) be a measure space endowed with a ball-basis \(\mathfrak{B}\), and let \((\Phi,\Psi,\phi,\psi) \in \mathscr{Y} \uplus \mathscr{G}\). Let \(\mathbb{B}\) be a Banach space and \(T\) be a \(\mathbb{B}\)-valued sublinear operator that is bounded from \(L^{(\Psi,\psi)}(X)\) to \(wL^{(\Psi,\psi)}(X)\). We say that \(T \in \mathbb{W}_{\Psi, \psi, \lambda}\) (or simply write \(T \in \mathbb{W}_{\Psi, \psi}\)) if for any balls \(A,B \in \mathfrak{B}\) with \(A \subset B\), any function \(f \in L^{(\Psi,\psi)}(X)\), and any \(0 < \lambda < 1\), the following inequality holds:
	\begin{align}\label{D1.6}
		\mu \left( \left\{x \in A : \|\mathscr{T}(f\,\mathbb{I}_B)(x)\|_\mathbb{B} > \Psi^{-1}\left( \frac{\psi(\mu(A))}{\lambda} \right) \|T\|_{L^{(\Psi,\psi)} \to wL^{(\Psi,\psi)}} \|f\|_{\Phi,\phi,A} \right\} \right)\leq \lambda \mu(A).
	\end{align}
\end{definition}

\begin{remark}
	The spaces \(L^{(\Psi,\psi)}(X)\) and \(wL^{(\Psi,\psi)}(X)\) appearing in this paper are respectively the abstract Orlicz-Morrey space endowed with a ball-basis (cf. Definition \ref{Orlicz-Morrey space}) and the abstract weak Orlicz-Morrey space endowed with a ball-basis (cf. Definition \ref{weak Orlicz-Morrey space}).
\end{remark}

Finally, we introduce the following geometric condition, which enables a pointwise domination between the sparse operator \(\mathcal{A}_{\mathcal{S}, \Phi,\phi}\) and the maximal operator \(\mathcal{M}_{\mathfrak{B}, \Phi, \phi}\).

\begin{definition}{\cite{Abstract2019}}\label{B-condition}
	Let \((X,\,\mathfrak{M},\,\mu)\) be a measure space endowed with a ball-basis \(\mathfrak{B}\). We say that the ball-basis \(\mathfrak{B}\) satisfies the  {\sffamily Besicovitch \(\mathfrak{N}\)-condition} with a constant \(\mathfrak{N} \in \mathbb{N}_+\) if for every family \(\mathfrak{A} \subset \mathfrak{B}\), there exists a subfamily \(\mathfrak{A}' \subset \mathfrak{A}\) such that
	\begin{align*}
		\bigcup_{B \in \mathfrak{A}} B = \bigcup_{B \in \mathfrak{A}'} B, \quad \text{and} \quad \sum_{B \in \mathfrak{A}'} \mathbb{I}_B(x) \leq \mathfrak{N}, \quad x \in X.
	\end{align*}
\end{definition}

\subsection{Structure of the article}\label{Section 1.4}

The organization of the subsequent material is as follows. Concrete instances of \(\Psi\)-BOOs are developed in Section \ref{Section 2}, alongside refinements of select classical theorems, to highlight the applicability of our principal results. Section \ref{Section 3} commences with a summary of established facts about the ball-basis and proceeds to examine fundamental properties of \(\Psi\)-BOOs, culminating in the proof of a sparse domination principle for these operators. A supplementary segment offers elaboration on several concepts and properties employed in Section 3. Appendix \ref{Section A1} recalls two significant function classes within Orlicz-Morrey spaces and supplies pertinent illustrative examples. In Appendix \ref{Section A2}, we first revisit the conventional Luxemburg norm and its associated properties (Propositions \ref{propA}, \ref{propB}, and \ref{propC}). The appendix then furnishes the definition of abstract (weak) Orlicz-Morrey spaces endowed with a ball-basis, demonstrating that specific choices for the functions \(\Phi(t)\) and \(\phi(t)\) recover a spectrum of known spaces, such as Orlicz spaces and generalized Morrey spaces.

\section{\bf Specific examples}\label{Section 2}

\subsection{Orlicz-Morrey maximal operators}\label{Section 2.1}

The Orlicz-Morrey maximal operator required in this subsection has already been given in Definition \ref{Maximal}.

\begin{theorem}
	Let \((\Phi,\Psi,\phi,\psi)\in\mathscr{Y} \uplus \mathscr{G}\) and let \((X,\,\mathfrak{M},\,\mu)\) be a measure space endowed with a ball-basis \(\mathfrak{B}\). Then the operator \(\mathcal{M}_{\mathfrak{B},\Phi,\phi}\) is a \(\Psi\)-BOO with respect to \(\mathfrak{B}\).
\end{theorem}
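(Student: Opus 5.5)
We must verify ($\Psi$-BOO-I) and ($\Psi$-BOO-II) for $T=\mathcal{M}_{\mathfrak{B},\Phi,\phi}$. We linearize it by taking $\mathbb{B}=\ell^\infty(\mathfrak{B})$ and
\[
\mathscr{T}f(x)=\bigl\{\|f\|_{\Phi,\phi,A}\,\mathbb{I}_A(x)\bigr\}_{A\in\mathfrak{B}}.
\]
This is not literally linear, so we work instead with the sublinear scalar formulation permitted in Definition \ref{BOO}. Either way, the only inputs needed are two facts about the Luxemburg norm from Appendix \ref{Section A2}:
\begin{itemize}
\item[(i)] monotonicity and the triangle inequality for $\|\cdot\|_{\Phi,\phi,A}$ (Propositions \ref{propA}--\ref{propC});
\item[(ii)] the localization $\|f\mathbb{I}_E\|_{\Phi,\phi,A}=\|f\|_{\Phi,\phi,A}$ whenever $A\subset E$.
\end{itemize}

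\textbf{Condition ($\Psi$-BOO-II).} Fix $B\in\mathfrak{B}$ and $x\in B$. Split the balls $A\ni x$ into two classes:
\begin{itemize}
\item balls with $\mu(A)\le 2\mu(B)$, which by B4 satisfy $A\subset B^\P$;
\item balls with $\mu(A)>2\mu(B)$.
\end{itemize}
For a ball of the first class, (ii) gives $\|f\|_{\Phi,\phi,A}=\|f\mathbb{I}_{B^\P}\|_{\Phi,\phi,A}$, so its contribution to the two maximal functions is identical. For a ball of the second class, the sublinear difference satisfies
\[
\bigl|\|f\|_{\Phi,\phi,A}-\|f\mathbb{I}_{B^\P}\|_{\Phi,\phi,A}\bigr|\le\|f\|_{\Phi,\phi,A}.
\]
It therefore suffices to control $\|f\|_{\Phi,\phi,A}$ by $\langle\|f\|\rangle_{\Phi,\phi,B}$, which is a supremum over balls \emph{containing} $B$. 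Here we use the standard ball-basis lemma (recalled in Section \ref{Section 3}, cf. \cite{Caomingming2023}): since $A\cap B\neq\varnothing$ and $\mu(A)>2\mu(B)$, we have $B\subset A^\P$ and $\mu(A^\P)\le\mathscr{C}_0\mu(A)$.

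The comparison $\|f\|_{\Phi,\phi,A}\lesssim\|f\|_{\Phi,\phi,A^\P}$ follows from the definition of the Luxemburg norm. The normalization $\Phi(|f|/t)/(\mu\phi(\mu))$, together with the monotonicity properties of $\phi\in\mathscr{G}$ (namely $\phi$ almost decreasing and $t\phi(t)$ almost increasing), gives
\[
\mu(A)\phi(\mu(A))\gtrsim \mu(A^\P)\phi(\mu(A^\P))\cdot\mathscr{C}_0^{-1},
\]
and convexity of $\Phi$ absorbs the factor $\mathscr{C}_0$. Hence both $Tf(x)$ and $Tf(x')$ differ from their truncations by at most $C\,\langle\|f\|\rangle_{\Phi,\phi,B}$. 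Taking differences at $x$ and $x'$ yields the bound with $\mathscr{C}_2(T)\approx 2C(\mathscr{C}_0)$.

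\textbf{Condition ($\Psi$-BOO-I).} Given $B_0$ with $B_0^\P\subsetneq X$, choose $B\supsetneq B_0$ with $B^\P\supset B_0^\P$; this is possible by B4 applied to any strictly larger ball, which exists by B2 and B3. For $x\in B_0$, sublinearity gives
\[
|T(f\mathbb{I}_{B^\P})(x)-T(f\mathbb{I}_{B_0^\P})(x)|\le T(f\mathbb{I}_{B^\P\setminus B_0^\P})(x).
\]
For any $A\ni x$ with $A\cap(B^\P\setminus B_0^\P)\neq\varnothing$, we have $A\not\subset B_0^\P$. By B4 this forces $\mu(A)>2\mu(B_0)$, so by the same lemma $B_0\subset A^\P$. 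We then bound
\[
\|f\mathbb{I}_{B^\P}\|_{\Phi,\phi,A}\lesssim\|f\mathbb{I}_{B^\P}\|_{\Phi,\phi,A^\P}.
\]
To reach $\|f\|_{\Phi,\phi,B^\P}$, we split into two cases:
\begin{itemize}
\item if $\mu(A)\le\mu(B^\P)$, we want $A\subset(B^\P)^\P$-type containment and a measure comparison;
\item if $\mu(A)>\mu(B^\P)$, the integral over $A\cap B^\P$ is normalized by a larger mass, which the $\mathscr{G}$-monotonicity again controls.
\end{itemize}

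\textbf{Main obstacle.} The key difficulty is comparing $\|\cdot\|_{\Phi,\phi,A}$ across balls of comparable but unequal measure using only $\phi\in\mathscr{G}$. I expect to choose $B$ cleverly: take $B$ to be a ball of nearly minimal measure strictly containing $B_0$, so that only balls $A$ with $\mu(A)\gtrsim\mu(B)$ contribute and the second case above applies with uniform constants. If this fails, the fallback is to replace $\|f\|_{\Phi,\phi,B^\P}$ by a slight enlargement and absorb the discrepancy into $\mathscr{C}_1(T)$.
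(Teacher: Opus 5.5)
Your verification of (\(\Psi\)-BOO-II) is correct and matches the paper's argument. Balls \(A\ni x\) with \(\mu(A)\le 2\mu(B)\) lie in \(B^\P\), so they contribute identically to \(\mathcal{M}_{\mathfrak{B},\Phi,\phi}f(x)\) and \(\mathcal{M}_{\mathfrak{B},\Phi,\phi}(f\mathbb{I}_{B^\P})(x)\). Larger balls satisfy \(B\subset A^\P\), so Proposition \ref{propB} gives \(\|f\|_{\Phi,\phi,A}\lesssim\|f\|_{\Phi,\phi,A^\P}\le\langle\|f\|\rangle_{\Phi,\phi,B}\).

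The gap is in (\(\Psi\)-BOO-I), where you never establish a constant independent of \(B\).

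\textbf{Why the choice of \(B\) matters.} For an arbitrary ball \(B\supsetneq B_0\), the required estimate cannot hold with a constant independent of \(B\). The dangerous balls are those \(A\ni x\) with \(2\mu(B_0)<\mu(A)\ll\mu(B^\P)\), for which Proposition \ref{propA} only gives a constant proportional to \(\mu(B^\P)/\mu(A)\). Concretely, take Euclidean balls, \(\Phi(t)=t\), \(\phi\equiv1\), and \(f\ge0\) supported just outside \(B_0^\P\), with \(r_0,R\) the radii of \(B_0,B\). Then the left-hand side is \(\gtrsim r_0^{-n}\|f\|_{L^1}\), while \(\|f\|_{\Phi,\phi,B^\P}\sim R^{-n}\|f\|_{L^1}\). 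Your case ``\(\mu(A)\le\mu(B^\P)\)'' is exactly this situation. Containing \(A\) in \(B^{[2]}\) does not help, and neither does passing to \(\|f\mathbb{I}_{B^\P}\|_{\Phi,\phi,A^\P}\). So the choice of \(B\) is the entire content of (\(\Psi\)-BOO-I), and your last paragraph only guesses it.

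\textbf{How to complete your guess.} The guess does work, but you must prove the comparison. Put \(m:=\inf\{\mu(G):G\in\mathfrak{B},\ G\supsetneq B_0\}\ge\mu(B_0)>0\). This family is nonempty: for \(y\notin B_0^\P\) and \(x\in B_0\), B2 gives \(G\ni x,y\); ball-hull control forces \(\mu(G)>2\mu(B_0)\), hence \(B_0\subset G^\P\ni y\) (B3 is not needed). Take \(B\supsetneq B_0\) with \(\mu(B)<2m\). Now let \(x\in A\cap B_0\) with \(A\not\subset B_0^\P\). Then \(\mu(A)>2\mu(B_0)\), so \(B_0\subset A^\P\), and \(A^\P\neq B_0\) since \(A\not\subset B_0\). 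Thus \(\mu(A^\P)\ge m\), and
\[
\mu(B^\P)\le\mathscr{C}_0\,\mu(B)<2\mathscr{C}_0\,m\le2\mathscr{C}_0\,\mu(A^\P)\le2\mathscr{C}_0^2\,\mu(A).
\]
Since \(x\in A\cap B^\P\), Proposition \ref{propA} gives \(\|f\mathbb{I}_{B^\P}\|_{\Phi,\phi,A}\lesssim\|f\|_{\Phi,\phi,B^\P}\). Balls \(A\subset B_0^\P\) contribute equally to \(\mathcal{M}_{\mathfrak{B},\Phi,\phi}(f\mathbb{I}_{B^\P})(x)\) and \(\mathcal{M}_{\mathfrak{B},\Phi,\phi}(f\mathbb{I}_{B_0^\P})(x)\). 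Together with \(\mathcal{M}_{\mathfrak{B},\Phi,\phi}(f\mathbb{I}_{B_0^\P})\le\mathcal{M}_{\mathfrak{B},\Phi,\phi}(f\mathbb{I}_{B^\P})\), this closes (\(\Psi\)-BOO-I).

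The paper makes an analogous extremal choice: \(B=B_1^\P\), where \(B_1\) has nearly minimal measure among balls meeting \(B_0\) with \(\mu(B_1)>\mu(B_0)\). This yields \(\mu(B^\P)\le2\mathscr{C}_0^2\mu(A)\) for every \(A\ni x\) with \(\mu(A)>\mu(B_0)\).
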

\begin{proof}
	Choose an arbitrary ball \(B_0\in\mathfrak{B}\) and a point \(x\in B_0\). Define
	\begin{align*}
		b := \inf_{B \in \mathfrak{B}^\P} \mu(B), \qquad \text{where } \mathfrak{B}' := \{B \in \mathfrak{B}: B \cap B_0 \neq \varnothing,\ \mu(B) > \mu(B_0)\}.
	\end{align*}
	Select a ball \(B_1 \in \mathfrak{B}^\P\) such that \(b \leq \mu(B_1) < 2b \). Set \(B := B_1^\P\). By ball-hull control property we obtain
	\begin{align}\label{2.1}
		B_0 \subsetneq B_1^\P = B.
	\end{align}
	Note that for any functions \(f \in L^{(\Psi,\psi)}(X,\,\mu)\subset L^{(\Phi,\phi)}(X,\,\mu)\), and for any number \(\varepsilon\) satisfying \(0 < \varepsilon < \|f\|_{\Phi,\phi,B^\P}\), there exists a ball \(B_2 \in \mathfrak{B}\) containing \(x\) for which
	\begin{align}\label{2.2}
		\mathcal{M}_{\mathfrak{B},\Phi,\phi}(f \cdot \mathbb{I}_{B^\P})(x) = \sup_{\widetilde{B} \in \mathfrak{B}:\, \widetilde{B} \ni x} \|f \cdot \mathbb{I}_{B^\P}\|_{\Phi, \phi, \widetilde{B}} \leq \varepsilon + \|f \cdot \mathbb{I}_{B^\P}\|_{\Phi, \phi, B_2}.
	\end{align}
	\begin{itemize}[leftmargin=2.3em]
		\item Suppose \( \mu(B_2) \leq \mu(B_0) \). Then ball-hull control property together with (\ref{2.1}) gives \(B_2 \subset B_0^\P \subset B^\P\).
		Consequently,
		\begin{align}\label{2.3}
			\mathcal{M}_{\mathfrak{B},\Phi,\phi}(f \cdot \mathbb{I}_{B_0^\P})(x) \geq \|f \cdot \mathbb{I}_{B_0^\P}\|_{\Phi, \phi, B_2} = \|f\|_{\Phi, \phi, B_2}.
		\end{align}
		Combining this with (\ref{2.2}) and (\ref{2.3}) we obtain
		\begin{align*}
			\bigl|\mathcal{M}&_{\mathfrak{B},\Phi,\phi}(f\cdot\mathbb{I}_{B^\P})(x)-\mathcal{M}_{\mathfrak{B},\Phi,\phi}(f\cdot\mathbb{I}_{B_0^\P})(x) \bigr| \\
			&= \mathcal{M}_{\mathfrak{B}, \Phi, \phi}(f \cdot \mathbb{I}_{B^\P}) (x)-\mathcal{M}_{\mathfrak{B},\Phi,\phi}(f\cdot\mathbb{I}_{B_0^\P})(x) \\
			&\leq \varepsilon + \|f \cdot \mathbb{I}_{B^\P}\|_{\Phi, \phi, B_2} - \|f\|_{\Phi, \phi, B_2}
			= \varepsilon \leq \|f\|_{\Phi,\phi,B^\P}.
		\end{align*}
		
		\item Consider the alternative case \(\mu(B_2) > \mu(B_0)\). Then, by definition, \(B_2 \in \mathfrak{B}'\) and hence \(\mu(B_2) \geq b\). Moreover,
		\begin{align}\label{2.4}
			\mu(B^\P) = \mu(B_1^{\P\P}) \leq \mathscr{C}_0^2 \mu(B_1) \leq 2\mathscr{C}_0^2 b \leq 2\mathscr{C}_0^2 \mu(B_2).
		\end{align}
		Observing that \(B_2 \cap B^\P \ne \varnothing\), we therefore apply Proposition \ref{propA} in combination with (\ref{2.2}) and (\ref{2.4}) to deduce that
		\begin{align*}
			\bigl|\mathcal{M}_{\mathfrak{B},\Phi,\phi}&(f\cdot\mathbb{I}_{B^\P})(x)-\mathcal{M}_{\mathfrak{B},\Phi,\phi}(f\cdot\mathbb{I}_{B_0^\P})(x)\bigr| \\
			&\leq \mathcal{M}_{\mathfrak{B}, \Phi, \phi} (f \cdot \mathbb{I}_{B^\P}) (x) \leq \varepsilon + \|f \cdot \mathbb{I}_{B^\P}\|_{\Phi, \phi, B_2} \\
			&\leq \|f\|_{\Phi, \phi, B^\P} + \max\{1, C^{-1}_{dc}\} \max\{1, C_{ic}\} 2 \mathscr{C}_0^2 \|f\|_{\Phi,\phi,B^\P} \lesssim \|f\|_{\Phi, \phi, B^\P}.
		\end{align*}
	\end{itemize}
	Thus condition (\(\Psi\)-BOO-I) is satisfied.
	
	To proceed, fix a ball \(B \in \mathfrak{B}\), two points \(x,\,x'\in B\), and a nonzero function \(f \in L^{(\Psi,\psi)}(X,\,\mu)\subset L^{(\Phi,\phi)}(X,\,\mu)\). By the definition of the Orlicz-Morrey maximal operator,
	\begin{align}\label{2.5}
		\mathcal{M}_{\mathfrak{B},\Phi,\phi}f(x) \leq \|f\|_{\Phi,\phi,A} + \langle \|f\| \rangle_{\Phi,\phi,B},
	\end{align}
	for some ball \(A \in \mathfrak{B}\) containing \(x\).  
	\begin{itemize}[leftmargin=2.3em]
		\item If \(\mu(A) \leq \mu(B)\), ball-hull control property implies \( A \subset B^\P \), and therefore
		\begin{align}\label{2.6}
			\mathcal{M}_{\mathfrak{B},\Phi,\phi}(f\cdot\mathbb{I}_{B^\P})(x) \geq \|f\cdot\mathbb{I}_{B^\P}\|_{\Phi,\phi,A} = \|f\|_{\Phi,\phi,A} ,
		\end{align}
		Combining (\ref{2.5}) and (\ref{2.6}), we obtain
		\begin{align*}
			\bigl|\mathcal{M}_{\mathfrak{B},\Phi,\phi}f(x)-&\mathcal{M}_{\mathfrak{B},\Phi,\phi}(f\cdot\mathbb{I}_{B^\P})(x)\bigr| \\
			&=\mathcal{M}_{\mathfrak{B},\Phi,\phi}f(x)-\mathcal{M}_{\mathfrak{B},\Phi,\phi}(f\cdot\mathbb{I}_{B^\P})(x) \leq \langle \|f\|\rangle_{\Phi,\phi,B}.
		\end{align*}
		
		\item If \(\mu(A) > \mu(B)\), then \(B \subset A^\P\). Replacing \(B\) in Proposition \ref{propB} with \(A^\P\) gives
		\begin{align}\label{2.7}
			\|f\|_{\Phi,\phi,A} \lesssim \|f\|_{\Phi,\phi,A^\P} \leq \langle \|f\|\rangle_{\Phi,\phi,B}.
		\end{align}
		Using (\ref{2.5}) and (\ref{2.7}), we deduce
		\begin{align*}
			\bigl|\mathcal{M}_{\mathfrak{B},\Phi,\phi}f(x)-&\mathcal{M}_{\mathfrak{B},\Phi,\phi}(f\cdot\mathbb{I}_{B^\P})(x)\bigr| \leq \mathcal{M}_{\mathfrak{B},\Phi,\phi}f(x) \\
			&\leq \|f\|_{\Phi,\phi,A} + \langle \|f\| \rangle_{\Phi,\phi,B} \lesssim \langle \|f\|\rangle_{\Phi,\phi,B}.
		\end{align*}
	\end{itemize}
	Hence, for any \(x \in B\) we have shown
	\begin{align*}
		\left|\mathcal{M}_{\mathfrak{B},\Phi,\phi}f(x)-\mathcal{M}_{\mathfrak{B},\Phi,\phi}(f\cdot\mathbb{I}_{B^\P})(x)\right| \lesssim \langle \|f\|\rangle_{\Phi,\phi,B}.
	\end{align*}
	This estimate immediately yields
	\begin{align*}
		\bigl|\left(\mathcal{M}_{\mathfrak{B},\Phi,\phi}f-\mathcal{M}_{\mathfrak{B},\Phi,\phi}(f\cdot\mathbb{I}_{B^\P})\right)(x)-\left(\mathcal{M}_{\mathfrak{B},\Phi,\phi}f-\mathcal{M}_{\mathfrak{B},\Phi,\phi}(f\cdot\mathbb{I}_{B^\P})\right)(x')\bigr| \lesssim \langle \|f\|\rangle_{\Phi,\phi,B}.
	\end{align*}
	which verifies condition (\(\Psi\)-BOO-II). Consequently, \(\mathcal{M}_{\mathfrak{B},\Phi,\phi}\) is a \(\Psi\)-BOO with respect to \(\mathfrak{B}\).
\end{proof}

Let \((X,\,\mathfrak{M},\,\mu)\) be a measure space endowed with a ball-basis \(\mathfrak{B}\). Consider a Young function \(\Phi \in \mathscr{Y}\) and a function \(\phi \in \mathscr{G}\). For a ball \(B \in \mathfrak{B}\) and a constant \(K > 0\), we obtain
\begin{align*}
	\|K\|_{\Phi,\phi,B} = \inf\left\{\lambda > 0: \frac{1}{\mu(B) \phi( \mu(B))} \int_B \Phi \left(\frac{K}{\lambda}\right)d\mu \leq 1\right\} = \frac{K}{\Phi^{-1}(\phi( \mu(B)))}.
\end{align*}

If we now require \(\|K\|_{\Phi,\phi,B}\) to be bounded---that is, if there exist positive constants \(K_1, K_2 > 0\) and a parameter \(\lambda > 0\) such that
\begin{align}\label{2.8}
	K_1 \leq \Phi^{-1}(\lambda \phi( \mu(B))) \leq K_2,
\end{align}
then the following condition is introduced.

\begin{definition}\label{assumption2}
	For \(\Phi \in \mathscr{Y}\) and \(\phi \in \mathscr{G}\), we say that the pair \((\Phi,\phi)\) satisfies the \(E_{mb}\)-condition, written as \((\Phi,\phi) \in E_{mb}\), if inequality (\ref{2.8}) holds.
\end{definition}

According to Definition \ref{assumption1}, we obtain
\begin{align*}
	\mathcal{M}_{\mathfrak{B},\Phi,\phi}f(x) = \sup_{B\in\mathfrak{B}:\,x\in B}\|f\|_{\Phi,\phi,B} \leq \|f\|_{L^{(\Phi,\phi)}} \leq \mathscr{K} \cdot \|f\|_{L^{(\Psi,\psi)}}.
\end{align*}
Then, from Definition \ref{assumption2}, it follows that for any \(B \in \mathfrak{B}\),
\begin{align*}
	\left\|\mathcal{M}_{\mathfrak{B},\Phi,\phi}f(x)\right\|_{L^{(\Psi,\psi)}} = \sup_{B \in \mathfrak{B}} \left\|\mathcal{M}_{\mathfrak{B}, \Phi,\phi}f(x)\right\|_{\Psi,\psi,B}\leq \sup_{B \in \mathfrak{B}} \bigl\| \mathscr{K} \|f\|_{L^{(\Psi, \psi)}} \bigr\|_{\Psi,\psi,B} \lesssim \|f\|_{L^{(\Psi,\psi)}}.
\end{align*}
This implies that the operator \(\mathcal{M}_{\mathfrak{B},\Phi,\phi}: \, L^{(\Psi,\psi)}(X,\mu) \to L^{(\Psi,\psi)}(X,\mu)\) is bounded.

\subsection{\(\omega\)-Calder\'on-Zygmund operators}\label{Section 2.2}

In this section, we discuss the \(\omega\)-Calder\'on-Zygmund operator in the space \((\mathbb{R}^n,\mathcal{L}^n)\).
\begin{definition}
	Let \(\omega : [0, +\infty) \to [0, +\infty)\) be a modulus of continuity; that is, \(\omega\) is increasing, subadditive, and satisfies \(\omega(0)=0\). The modulus \(\omega\) is said to satisfy the {\sffamily Dini condition} if and only if the {\sffamily Dini norm}
	\begin{align*}
		\|\omega\|_{\rm Dini} := \int_0^1 \omega(t) \, \frac{dt}{t} < \infty.
	\end{align*}
	When this holds, we write \(\omega \in \mathrm{Dini}\).
\end{definition}

\begin{definition}
	Let \(\omega\) be a modulus of continuity. A function \(K(x,y)\) defined on \(\mathbb{R}^n \times \mathbb{R}^n \setminus \{(x,x) \in \mathbb{R}^{2n}\}\) is called an {\sffamily \(\omega\)-Calder\'on-Zygmund kernel} if there exists a constant \(C_K > 0\) for which the following estimates hold:
	\begin{itemize*}
		\item[{\rm (1)}] {\it Size condition:}
		\begin{align}\label{Kernel-1}
			\left|K(x, y)\right| \leq \frac{C_K}{\left|x-y\right|^n} \quad \text{for} \quad  x \ne y.
		\end{align}
		
		\item[{\rm (2)}] {\it Smoothness condition in the first variable:}
		\begin{align}\label{Kernel-2}
			\left|K(x, y) - K(x', y)\right| \leq \frac{C_K }{\left|x-y\right|^n}\,\omega\left(\frac{|x-x'|}{|x-y|} \right) \quad \text{for} \quad  |x-x'| \leq \frac{1}{2}|x-y|.
		\end{align}
		
		\item[{\rm (3)}] {\it Smoothness condition in the other variable:}
		\begin{align}\label{Kernel-3}
			\left|K(x, y) - K(x, y')\right| \leq \frac{C_K }{\left|x-y\right|^n}\,\omega\left(\frac{|y-y'|}{|x-y|} \right) \quad \text{for} \quad  |y-y'| \leq \frac{1}{2}|x-y|.
		\end{align}
	\end{itemize*}
	In the special case where \(\omega(t)=t^{\delta}\) for some \(\delta \in (0,1]\), the kernel \(K\) is referred to as a {\sffamily standard Calder\'on-Zygmund kernel}.
	
	A linear operator \(T: \mathcal{S} (\mathbb{R}^n) \to \mathcal{S}' (\mathbb{R}^n)\) is called an {\sffamily \(\omega\)-Calder\'on-Zygmund operator} provided that
	\begin{itemize*}
		\item[{\rm (i)\,}] \(T\) is bounded on \(L^2(\mathbb{R}^n)\).
		\item[{\rm (ii)}] there exists an \(\omega\)-Calder\'on-Zygmund kernel \(K\) such that for every \(f\in L^2_{\text{comp}}(\mathbb{R}^n)\),
		\begin{align}\label{CZ--Tf(x)}
			Tf(x)=\int_{\mathbb{R}^n} K(x,y)f(y)\,dy,\qquad x \notin {\rm supp}\,f.
		\end{align}
	\end{itemize*}
\end{definition}

\begin{remark}
	Two remarks are in order.
	\begin{itemize*}
		\item Because \(K(x,y)\) is continuous in \(y\) on \({\rm supp}\,f\) whenever \(x\notin{\rm supp}\,f\), the representation (\ref{CZ--Tf(x)}) --- initially required for \(f\in L^2_{comp}(\mathbb{R}^n)\) --- remains valid for all \(f\in L^1_{comp}(\mathbb{R}^n)\).
		
		\item It is known that the \(\omega\)-Calder\'on-Zygmund operator \(T\) is bounded from \(L^p(X,\,\rho,\,\mathfrak{M},\,\mu)\) to itself for \(1 < p < \infty\), and from \(L^1(X, \, \rho, \, \mathfrak{M}, \, \mu)\) to \(L^{1, \infty} (X, \, \rho, \, \mathfrak{M}, \, \mu)\). Here \((X,\,\rho,\,\mathfrak{M},\,\mu)\) is a space of homogeneous type endowed with the ball-basis \(\mathfrak{B}'_{\rho}\) (cf. \cite[Theorem 7.6]{Abstract2019}).
	\end{itemize*}
\end{remark}

\begin{theorem}
	Let \((\Phi,\Psi,\phi,\psi)\in\mathscr{Y} \uplus \mathscr{G}\). If \(T\) is an \(\omega\)-Calder\'on-Zygmund operator with \(\omega \in {\rm Dini}\), then \(T\) is a \(\Psi\)-BOO with respect to \(\mathfrak{B}\), with constants  
	\begin{align*}
		\mathscr{C}_1(T) \lesssim C_K \quad \text{and} \quad \mathscr{C}_2(T) \lesssim C_K\|\omega\|_{\rm Dini}.
	\end{align*}
\end{theorem}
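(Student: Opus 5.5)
We take \(\mathfrak{B}\) to be the Euclidean ball-basis \(\mathfrak{B}_E\) of Remark \ref{example ball-basis}, with \(B^\P=\ell B\), and check the two conditions of Definition \ref{BOO} directly from the kernel estimates (\ref{Kernel-1})--(\ref{Kernel-3}) and the representation (\ref{CZ--Tf(x)}). The mechanism is the classical one: in each condition the function is supported away from the points being tested, so only the kernel bounds are used, never the \(L^2\) theory. The one new ingredient is converting plain \(L^1\)-averages over balls into the Luxemburg-type quantity \(\|f\|_{\Phi,\phi,B}\). For this we use an Orlicz--Jensen type bound
\[
\frac{1}{\mu(B)}\int_B|f|\,d\mu\lesssim \|f\|_{\Phi,\phi,B}\cdot \Phi^{-1}\bigl(\phi(\mu(B))\bigr),
\]
which follows from Jensen's inequality for the convex \(\Phi\) and the definition of \(\|\cdot\|_{\Phi,\phi,B}\). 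The factor \(\Phi^{-1}(\phi(\mu(B)))\) is uniformly bounded under the normalisation behind the \(E_{mb}\)-condition (\ref{2.8}), or it is absorbed as in Propositions \ref{propA}--\ref{propB}. We write \(\langle|f|\rangle_B\lesssim\|f\|_{\Phi,\phi,B}\) for this comparison.

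For (\(\Psi\)-BOO-I), take \(B_0=B(x_0,r)\) and \(B=B(x_0,2r)\supsetneq B_0\). Then \(f\,\mathbb{I}_{B^\P}-f\,\mathbb{I}_{B_0^\P}=f\,\mathbb{I}_{2\ell B_0\setminus \ell B_0}\). This function is supported at distance \(\gtrsim r\) from every \(x\in B_0\), so by (\ref{CZ--Tf(x)}) and the size condition (\ref{Kernel-1}),
\[
|T(f\mathbb{I}_{B^\P})(x)-T(f\mathbb{I}_{B_0^\P})(x)|\le \int_{2\ell B_0\setminus\ell B_0}\frac{C_K|f(y)|}{|x-y|^n}\,dy\lesssim C_K\,\langle|f|\rangle_{B^\P}\lesssim C_K\|f\|_{\Phi,\phi,B^\P}.
\]
This gives \(\mathscr{C}_1(T)\lesssim C_K\).

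For (\(\Psi\)-BOO-II), fix a ball \(B=B(x_0,r)\) and points \(x,x'\in B\). We have \(Tf-T(f\mathbb{I}_{B^\P})=T(f\mathbb{I}_{(\ell B)^c})\), and since \(\ell\ge 3\), \(|x-x'|\le 2r\le\frac12|x-y|\) for every \(y\notin\ell B\). Applying the smoothness condition (\ref{Kernel-2}) and decomposing \((\ell B)^c\) into the annuli \(2^{k+1}\ell B\setminus 2^k\ell B\), \(k\ge0\), we get
\[
\Bigl|\int_{(\ell B)^c}(K(x,y)-K(x',y))f(y)\,dy\Bigr|\lesssim C_K\sum_{k\ge0}\omega(2^{-k})\,\langle|f|\rangle_{2^{k+1}\ell B}\lesssim C_K\sum_{k\ge0}\omega(2^{-k})\,\|f\|_{\Phi,\phi,2^{k+1}\ell B}.
\]
Each ball \(2^{k+1}\ell B\) belongs to \(\mathfrak{B}\) and contains \(B\), so every term is bounded by \(\langle\|f\|\rangle_{\Phi,\phi,B}\). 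Finally, monotonicity and subadditivity of \(\omega\) give \(\sum_k\omega(2^{-k})\lesssim\|\omega\|_{\rm Dini}\), hence \(\mathscr{C}_2(T)\lesssim C_K\|\omega\|_{\rm Dini}\).

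The main obstacle is making the comparison \(\langle|f|\rangle_B\lesssim\|f\|_{\Phi,\phi,B}\) precise. Because of the weight \(\phi(\mu(B))\) in the Luxemburg norm, a constant uniform in the radius needs a normalisation such as (\ref{2.8}) or the properties recorded in Appendix \ref{Section A2}. I would first try to deduce it from Jensen's inequality together with those appendix propositions, keeping the \(\phi\)-dependent factor explicit. If a uniform bound fails, I would state the constants relative to that normalisation. A secondary technical point is that (\ref{CZ--Tf(x)}) is only assumed for compactly supported \(f\), so for general \(f\in L^{(\Psi,\psi)}\) one must approximate by truncations \(f\mathbb{I}_{R B}\) and let \(R\to\infty\).
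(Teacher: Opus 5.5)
Your kernel estimates are the same as the paper's. For (\(\Psi\)-BOO-I), the size condition on the annulus \(B^\P\setminus B_0^\P\) gives a bound by \(C_K\fint_{B^\P}|f|\). For (\(\Psi\)-BOO-II), the smoothness condition on dyadic annuli outside \(B^\P\), together with the Dini sum, gives \(C_K\|\omega\|_{\rm Dini}\sup_{B'\supset B}\fint_{B'}|f|\).

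The genuine gap is the step you flag yourself: the uniform comparison \(\fint_B|f|\lesssim\|f\|_{\Phi,\phi,B}\). Your Jensen argument is correct, but it yields exactly
\[
\fint_B|f|\,d\mu\le \Phi^{-1}\bigl(\phi(\mu(B))\bigr)\,\|f\|_{\Phi,\phi,B},
\]
with equality when \(f\) is constant on \(B\), so the factor cannot be improved. Nothing in the hypotheses of this theorem bounds it uniformly in \(B\). The \(E_{mb}\)-condition (\ref{2.8}) is not assumed here. Propositions \ref{propA}--\ref{propC} cannot absorb the factor either, since they only compare \(\|\cdot\|_{\Phi,\phi,\cdot}\) on different balls, never an \(L^1\)-average with a \(\Phi\)-\(\phi\) norm.

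This is not just a missing lemma; the step is false in the stated generality. The quadruple \(\Phi=\Psi=t\), \(\phi=\psi=1/t\) lies in \(\mathscr{Y}\uplus\mathscr{G}\) and gives \(\|f\|_{\Phi,\phi,B}=\int_B|f|\,dx\). Consider the Hilbert transform \(H\) on \(\mathbb{R}\) with hulls \(B^\P=\ell B\), take \(B_0=(-r,r)\), and let \(B\supsetneq B_0\) be any interval. Then \(\ell B\) strictly contains \(\ell B_0\), so \(\ell B\setminus\ell B_0\) contains an interval \(I\) of length \(\delta\le r\) adjacent to \(\ell B_0\). For \(f=\mathbb{I}_I\) and \(x\in B_0\),
\[
\bigl|H(f\,\mathbb{I}_{B^\P})(x)-H(f\,\mathbb{I}_{B_0^\P})(x)\bigr|=|Hf(x)|\ge\frac{\delta}{\pi(\ell+2)r},\qquad \|f\|_{\Phi,\phi,B^\P}=\delta .
\]
Letting \(r\to0\) shows that no finite \(\mathscr{C}_1(T)\) exists. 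So the comparison cannot be closed under \(\mathscr{Y}\uplus\mathscr{G}\) alone.

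The paper's proof does not supply this step either. It stops at the same \(L^1\)-average bounds, writes them as \(\|f\|_{\Phi(t)=t,\phi(t)=1,B^\P}\) and \(\langle\|f\|\rangle_B\), and declares both conditions verified. That amounts to the case \(\Phi(t)=t\), \(\phi\equiv1\).

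What your argument genuinely proves is that case. Through your Jensen inequality it also proves the general \((\Phi,\phi)\) case under the additional assumption \(\sup_{t>0}\phi(t)<\infty\). The constants are then multiplied by \(\Phi^{-1}(\sup_{t>0}\phi(t))\), and the upper half of (\ref{2.8}) suffices. You should state that assumption explicitly rather than appeal to an unspecified normalisation.

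There is also a smaller slip in (\(\Psi\)-BOO-II). For \(x,x'\in B\) and \(y\notin\ell B\) you only know \(|x-y|\ge(\ell-1)r\). So \(|x-x'|\le 2r\le\frac12|x-y|\) requires \(\ell\ge5\), not \(\ell\ge3\). You can either choose \(\ell\ge5\), which Remark \ref{example ball-basis} permits, or, as the paper does, compare \(x\) and \(x'\) separately with the centre \(x_0\), where \(|x-x_0|<r\le\frac12|x_0-y|\) holds.
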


\begin{proof}
	It is well known that Euclidean balls \(\mathfrak{B}_E\) form a ball-basis in \(\mathbb{R}^n\). Hence we choose a ball \(B = B(x_0, r_0) \in \mathfrak{B}_E\); then  
	\begin{align}\label{E ball-basis}
		B^\P = B(x_0, R_0) \in \mathfrak{B}_E,\quad\text{with} \; 2r_0 \leq R_0 \leq +\infty.
	\end{align}
	Now let \(T\) be an \(\omega\)-Calder\'on-Zygmund operator with \(\omega \in {\rm Dini}\). For a ball \(B\in\mathfrak{B}_E\), we denote by \(|B|\) its Lebesgue measure in \(\mathbb{R}^n\).
	
	First, we show that \(T\) satisfies condition (\(\Psi\)-BOO-I). Take an arbitrary ball \(A=B(x_0,r_0)\in \mathfrak{B}_E\) with \(A^\P\subsetneq \mathbb{R}^n\). From (\ref{E ball-basis}) we know that \(A^\P=B(x_0,R_0)\) with \(R_0 \geq 2r_0\). Set \(B=B(x_0,2R_0)\). Because \(A^\P\subsetneq\mathbb{R}^n\), we have \(R_0<\infty\) and  
	\begin{align}\label{2.13}
		A^\P = B(x_0,R_0) \subset B(x_0,2R_0) = B.
	\end{align} 
	Then, by the size condition (\ref{Kernel-1}), for any \(x \in A\) and \(y \in B^\P \setminus A^\P\),  
	\begin{align}\label{2.14}
		\left|K(x, y)\right| \leq \frac{C_K}{\left|x-y\right|^n} \lesssim \frac{C_K}{\omega_n\,r_0^n},
	\end{align} 
	where \(\omega_n\) denotes the volume of the unit ball \(\{ x \in \mathbb{R}^n : |x| \leq 1 \}\). Combining (\ref{2.13}) and (\ref{2.14}), we obtain for any \(x \in A\)  
	\begin{align*}
		\bigl| T(f\,\mathbb{I}_{B^\P})(x)&\,-\,T(f\,\mathbb{I}_{A^\P})(x) \bigr| \leq \int_{(B^\P) \setminus (A^\P)} |K(x, y)| |f(y)| \, dy \\
		&\lesssim \frac{C_K}{\omega_n\,r_0^n} |B^\P| \fint_{B^\P}|f(y)| = C_K \frac{R^n}{r_0^n} \|f\|_{\Phi(t)=t, \phi(t)=1, B^\P}, \quad\text{where} \; R\geq 8r_0.
	\end{align*}
	This implies that \(T\) fulfills condition (\(\Psi\)-BOO-I) with \(\mathscr{C}_1(T) \lesssim C_K\).
	
	Next, we verify that \(T\) satisfies condition (\(\Psi\)-BOO-II). For any ball \(B = B(x_0,r_0) \in \mathfrak{B}_E\) with \(B \subsetneq \mathbb{R}^n\) and for any \(x \in B\), we estimate as follows:  
	\begin{align*}
		\bigl| (T&f-T(f \cdot \mathbb{I}_{B^\P}))(x) - (Tf - T(f \cdot \mathbb{I}_{B^\P}))(x_0) \bigr| = \left|\, \int_{\mathbb{R}^n \setminus B^\P} \left( K(x, y) - K(x_0, y) \right) f(y)\, dy \,\right| \\
		&\leq \sum_{k=0}^{\infty} \int_{2^{k+1}B^\P \setminus 2^kB^\P} \left| K(x, y) - K(x_0, y) \right| \, |f(y)| \, dy \leq \sum_{k=0}^{\infty} \int_{2^{k+1}B^\P \setminus 2^kB^\P} \frac{C_K \omega\left(\frac{|x_0-x|}{|x_0-y|} \right) }{\left|x_0-y\right|^n} \, |f(y)| \, dy  \\
		&\leq \sum_{k=0}^{\infty} \frac{C_K \omega\left(2^{-k-1} \right) }{\left|2^kB^\P\right|} \int_{2^{k+1}B^\P \setminus 2^kB^\P} \, |f(y)| \, dy \leq 2^n\sum_{k=0}^{\infty} \frac{C_K \omega\left(2^{-k-1} \right) }{\left|2^{k+1} B^\P\right|} \int_{2^{k+1}B^\P} \, |f(y)| \, dy \\
		&\leq 2^n C_K \sum_{k=0}^{\infty} \omega\left(2^{-k-1} \right) \langle \|f\|\rangle_{\Phi(t)=t, \phi(t)=1, B} \lesssim C_K \|\omega\|_{\rm Dini} \langle \|f\|\rangle_{B},
	\end{align*}
	where we have used the following facts:  
	\begin{itemize*}
		\item For the ball \(B=B(x_0,r_0)\in\mathfrak{B}_E\) and any \(x \in B\), we have \(|x_0-x| < r_0\).
		
		\item If \(y \in 2^{k+1}B^\P \setminus 2^kB^\P\), then \(|x_0-y| \geq 2^kR_0 \geq 2^{k+1}r_0\) for \(k=0,1,2,...\).
		
		\item For the last estimate,
		\begin{align*}
			\sum_{k=0}^{\infty} \omega\left(2^{-k-1} \right) =  2 \sum_{k=0}^{\infty} \int_{2^{-k-1}}^{2^{-k}} \frac{\omega\left(2^{-k-1} \right)}{2^{-k}}dt \lesssim \sum_{k=0}^{\infty} \int_{2^{-k-1}}^{2^{-k}} \frac{\omega(t)}{t}dt = \int_{0}^{1} \frac{\omega(t)}{t}dt = \|\omega\|_{\rm Dini}.
		\end{align*}
	\end{itemize*}
	Consequently, condition (\(\Psi\)-BOO-II) holds with \(\mathscr{C}_2(T) \lesssim C_K\|\omega\|_{\rm Dini}\).
\end{proof}

According to Theorem \ref{main} and Theorem \ref{main2}, we can obtain the following conclusion.
\begin{theorem}
	Let \(\mathfrak{B}_E\) be a ball-basis in \(\mathbb{R}^n\) and let \(\lambda > 3\mathscr{C}_0^6\). Let \(T\) be an \(\omega\)-Calder\'on-Zygmund operator as described above and assume \(T \in \mathbb{W}_{\Psi, \psi, \lambda}\). Suppose that \((\Phi,\Psi,\phi,\psi) \in \mathscr{Y} \uplus \mathscr{G}\) and \((\Phi, \phi) \in E_{mb}\). Then the following statements hold:
	\begin{itemize*}
		\item[({\it a})] There exist two \(\frac{1}{2 \mathscr{C}_0^3}\)-sparse families \(\mathcal{S}_1, \, \mathcal{S}_2 \subset \mathfrak{B}_E\) such that for every \(f \in L^{(\Psi,\psi)}(\mathbb{R}^n) \subset L^{(\Phi,\phi)}(\mathbb{R}^n)\), every \(B \in \mathfrak{B}_E\), and \(a.e. \, x \in B\),
		\begin{align*}
			|\mathscr{T}f(x)| \lesssim \left(\mathscr{C}_1(T) + \mathscr{C}_2(T) + \|T\|_{L^{(\Psi, \psi)}(\mathbb{R}^n) \to wL^{(\Psi,\psi)}(\mathbb{R}^n)} \right) \cdot \left[ \mathcal{A}_{\mathcal{S}_1, \Phi,\phi} f(x) + \mathcal{A}_{\mathcal{S}_2, \Phi,\phi} f(x) \right].
		\end{align*}
		
		\item[({\it b})] Moreover, if \(\mathfrak{B}_E\) satisfies the Besicovitch \(\mathfrak{N}\)-condition, then
		\begin{align*}
			\|T\|_{L^{(\Psi, \psi)}(\mathbb{R}^n) \to L^{(\Psi,\psi)}(\mathbb{R}^n)} \lesssim \mathscr{K} \cdot \mathfrak{N} \cdot \left(\mathscr{C}_1(T) + \mathscr{C}_2(T) + \|T\|_{L^{(\Psi, \psi)}(\mathbb{R}^n) \to wL^{(\Psi,\psi)}(\mathbb{R}^n)} \right).
		\end{align*}
	\end{itemize*}
\end{theorem}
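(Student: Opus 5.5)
The theorem is a direct specialization of Theorem \ref{main} and Theorem \ref{main2} to $(X,\mathfrak{M},\mu)=(\mathbb{R}^n,\mathfrak{M}_{\mathcal{L}},\mathcal{L}^n)$ with the Euclidean ball-basis $\mathfrak{B}_E$. The plan is to check the hypotheses of those two theorems one by one and read off the conclusions.

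First, by Remark \ref{example ball-basis}, $\mathfrak{B}_E$ is a ball-basis with $\mathscr{C}_0=\ell^n$, $\ell\ge 1+2^{1+1/n}$, and $B^\P=\ell B$. The admissible range $\lambda>3\mathscr{C}_0^6$ is therefore meaningful with this $\mathscr{C}_0$. Second, the preceding theorem shows that an $\omega$-Calder\'on-Zygmund operator with $\omega\in\mathrm{Dini}$ is a $\Psi$-BOO with respect to $\mathfrak{B}_E$, with $\mathscr{C}_1(T)\lesssim C_K$ and $\mathscr{C}_2(T)\lesssim C_K\|\omega\|_{\rm Dini}$. Here $T$ is linear and real-valued, so we take $\mathbb{B}=\mathbb{R}$ and $\mathscr{T}=T$, giving $Tf(x)=|\mathscr{T}f(x)|$. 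The remaining hypotheses, $T\in\mathbb{W}_{\Psi,\psi,\lambda}$, $(\Phi,\Psi,\phi,\psi)\in\mathscr{Y}\uplus\mathscr{G}$ and $(\Phi,\phi)\in E_{mb}$, are assumed in the statement. In particular, Definition \ref{weak condition} presupposes that $T$ is bounded from $L^{(\Psi,\psi)}$ to $wL^{(\Psi,\psi)}$, so the constant $\mathscr{C}(T)$ is finite.

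With these checks, Theorem \ref{main} produces $\mathcal{S}\subset\mathfrak{B}_E$, the union of two $\frac{1}{2\mathscr{C}_0^3}$-sparse families $\mathcal{S}_1,\mathcal{S}_2$, such that $|\mathscr{T}f(x)|\lesssim\mathscr{C}(T)\,\mathcal{A}_{\mathcal{S},\Phi,\phi}f(x)$ for a.e. $x\in B$. Part (a) follows from the definition (\ref{sparse operator}), since each summand is nonnegative. If $\mathcal{S}_1$ and $\mathcal{S}_2$ overlap, then
\[
\mathcal{A}_{\mathcal{S},\Phi,\phi}f\le\mathcal{A}_{\mathcal{S}_1,\Phi,\phi}f+\mathcal{A}_{\mathcal{S}_2,\Phi,\phi}f
\]
pointwise; if they are disjoint, equality holds. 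Part (b) is Theorem \ref{main2} applied verbatim under the additional Besicovitch $\mathfrak{N}$-condition, yielding the bound $\mathscr{C}(T)\cdot\mathscr{K}\cdot\mathfrak{N}$.

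The only point needing care is the phrase ``$\Psi$-BOO with respect to $\mathfrak{B}$'' in the earlier theorem. It should be read with $\mathfrak{B}=\mathfrak{B}_E$, and its proof used $\Phi(t)=t$, $\phi\equiv1$ averages, so it must be matched to the $\|f\|_{\Phi,\phi,B}$ required in Definition \ref{BOO}. I would bridge this by comparing the $L^1$ average with the Luxemburg norm $\|f\|_{\Phi,\phi,B}$, using Jensen's inequality for the convex $\Phi$ together with the $E_{mb}$ normalization (\ref{2.8}), which controls $\Phi^{-1}(\phi(\mu(B)))$ from above and below. I expect this comparison to be the main obstacle. If it cannot be established, I would simply take the $\Psi$-BOO property as given by the earlier theorem, as the statement intends, and the rest is a direct citation.
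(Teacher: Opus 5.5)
Your proposal is correct and takes the same route as the paper, which obtains this theorem by combining the preceding result (an $\omega$-Calder\'on-Zygmund operator with $\omega\in\mathrm{Dini}$ is a $\Psi$-BOO on $\mathfrak{B}_E$) with Theorems \ref{main} and \ref{main2}. The bridge you flag is a step the paper itself passes over silently, and it does go through. Jensen's inequality gives $\frac{1}{\mu(B)}\int_B|f|\,d\mu\le \Phi^{-1}(\phi(\mu(B)))\,\|f\|_{\Phi,\phi,B}$, and the upper bound in (\ref{2.8}) together with convexity of $\Phi$ gives $\Phi^{-1}(\phi(\mu(B)))\le \max\{1,\lambda^{-1}\}K_2$, where $\lambda$ is the parameter in (\ref{2.8}); so only the upper half of the $E_{mb}$ condition is needed.
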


\subsection{Intrinsic square operators}\label{Section 2.3}

In this subsection, we fix a parameter \(0 < \alpha \leq 1\) and define \({\rm Lip}_{\alpha}\) as the collection of all Lipschitz functions \(\varphi : \mathbb{R}^{n} \to \mathbb{R}\) of order \(\alpha\) with homogeneous norm \(1\), supported in the closed unit ball \(\{ x \in \mathbb{R}^n : |x| \leq 1 \}\), and satisfying \(\int_{\mathbb{R}^{n}} \varphi(x) \, dx = 0\). For a locally integrable function \(f \in L^{1}_{{\rm loc}}(\mathbb{R}^{n})\) and a point \((y, t) \in \mathbb{R}^n \times \mathbb{R}_+ := \mathbb{R}^{n+1}_+\), we define  
\begin{align*}
	A_{\alpha}f(t, y) \;:=\; \sup_{\varphi \in {\rm Lip}_{\alpha}} \left| (\varphi_t * f)(y) \right|,
\end{align*}
where \(\varphi_t(x) = t^{-n} \varphi(x / t)\), and we call \(\varphi_t\) the kernel of \(A_{\alpha}\).

Let \(\Gamma_\beta(x) := \{(y, t) \in \mathbb{R}^{n+1}_+ : |x-y| < \beta t\}\) for \(\beta \in (0, \infty)\). We introduce the following three {\sffamily intrinsic square operators}:
\begin{align}
	\label{G 1} g_{\alpha}f(x) & := \left(\int_{0}^{\infty} \left( A_\alpha f(t, x) \right)^2\frac{dt}{t} \right)^{\frac{1}{2}},\\
	\label{G 2} g_{\alpha, \beta}f(x) & := \left( \iint_{\Gamma_\beta(x)} \left( A_\alpha f(t, z) \right)^2 \frac{dzdt}{t^{n+1}} \right)^{\frac{1}{2}},\\
	\label{G 3} g^*_{\lambda,\alpha}f(x) & := \left(\iint_{\mathbb{R}^{n+1}_+}\left(\frac{t}{t+|x-z|}\right)^{n\lambda}\left( A_\alpha f(t, z) \right)^2\frac{dzdt}{t^{n+1}} \right)^{\frac{1}{2}}, \quad \lambda > 3+\frac{2\alpha}{n}.
\end{align}

To simplify notation, we introduce the following three Banach spaces:
\begin{align}
	\label{U 1} \mathbb{B}_{g_{\alpha}} & := \left\{ F_1 : \mathbb{R}_+ \to \mathbb{R} \;\big|\; \left\|f_1\right\|_{\mathbb{B}_{g_{\alpha}}} := \left( \int_0^\infty |f_1(t)|^2 \frac{dt}{t} \right)^{\frac{1}{2}} < \infty \right\},\\
	\label{U 2} \mathbb{B}_{g_{\alpha, \beta}} & := \left\{ F_2 : \mathbb{R}^{n+1}_+ \to \mathbb{R} \;\big|\; \left\|f_2\right\|_{\mathbb{B}_{g_{\alpha, \beta}}} := \left( \int_{\mathbb{R}^{n+1}_+} |f_2(z,t)|^2 \frac{dz dt}{t^{n+1}} \right)^{\frac12} < \infty \right\},\\
	\label{U 3} \mathbb{B}_{g^*_{\lambda,\alpha}} & := \mathbb{B}_{g_{\alpha, \beta}}.
\end{align}

We proceed as follows for each of the three intrinsic square operators mentioned above.

(1) For the operator \(g_{\alpha}\): Let
\begin{align}\label{Iso-Kernel 1}
	\mathscr{P}_{g_{\alpha}}(x) := \{ \varphi_t(x) \}_{t>0},
\end{align}
where \(\varphi \in {\rm Lip}_{\alpha}\). Further, denote
\begin{align}\label{Iso-Tf 1}
	\mathscr{A}_{g_{\alpha}}f(x):=\{A_\alpha f(t, x)\}_{t>0}.
\end{align}
Here \(\mathscr{P}_{g_{\alpha}}\) is the kernel of \(\mathscr{A}_{g_{\alpha}}\). We then have \(g_{\alpha}f(x)=\left\|\mathscr{A}_{g_{\alpha}}f(x)\right\|_{\mathbb{B}_{g_{\alpha}}}\).

\vspace{6pt}

(2) For the operator \(g_{\alpha, \beta}\): Take \(P_{z,t}(x)=\mathbb{I}_{ \Gamma_\beta(0)}(z)\varphi_t(x-z)\), where \(\varphi \in {\rm Lip}_{\alpha}\) and \((z,t)\in\mathbb{R}^{n+1}_+\). Define
\begin{align}\label{Iso-Kernel 2}
	\mathscr{P}_{g_{\alpha, \beta}}(x):=\{P_{z,t}(x)\}_{(z,t)\in\mathbb{R}^{n+1}_+}
\end{align}
and 
\begin{align}\label{Iso-Tf 2}
	\mathscr{A}_{g_{\alpha, \beta}}f(x):=\{A_\alpha f(P_{z,t}, x)\}_{(z,t) \in \mathbb{R}^{n+1}_+}=\sup \left| (P_{z,t} * f)(x) \right|
\end{align}
with \(\mathscr{P}_{g_{\alpha, \beta}}\) being the kernel of \(\mathscr{A}_{g_{\alpha, \beta}}\). Then \(g_{\alpha, \beta}f(x)=\|\mathscr{A}_{g_{\alpha, \beta}}f(x)\|_{\mathbb{B}_{g_{\alpha, \beta}}}\).

\vspace{6pt}

(3) For the operator \(g^*_{\lambda,\alpha}\):  
Following a similar procedure, define
\[
Q_{z,t}(x)=\left(\frac{t}{t+|z|}\right)^{\frac{n \lambda}{2}} \varphi_t(x-z),
\quad \varphi \in {\rm Lip}_{\alpha},\,\, (z,t)\in \mathbb{R}^{n+1}_+.
\]
Set
\begin{align}\label{Iso-Kernel 3}
	\mathscr{P}_{g^*_{\lambda,\alpha}}(x) := \{Q_{z,t}(x)\}_{(z,t) \in \mathbb{R}^{n+1}_+}
\end{align}
and 
\begin{align}\label{Iso-Tf 3}
	\mathscr{A}_{g^*_{\lambda,\alpha}}f(x):=\{A_\alpha f(Q_{z,t}, x)\}_{(z,t) \in \mathbb{R}^{n+1}_+}=\sup \left| (Q_{z,t} * f)(x) \right|.
\end{align}
Consequently, \(g^*_{\lambda,\alpha}\) can be expressed as \(g^*_{\lambda,\alpha}f(x)=\|\mathscr{A}_{g^*_{\lambda,\alpha}}f(x)\|_{\mathbb{B}_{g^*_{\lambda,\alpha}}}\).

A canonical ball-basis in \((\mathbb{R}^n,\mathcal{L}^n)\) is furnished by the family \(\mathfrak{B}_Q\) of all cubes aligned with the coordinate axes, as demonstrated in remark \ref{example ball-basis}. For any \(Q \in \mathfrak{B}_Q\), the associated ball-hull is precisely the concentric fivefold dilation: \(Q^\P = 5Q\).

We begin by noting H\"older's inequality in Orlicz-Morrey spaces.
\begin{lemma}\cite[Lemma 9.2]{OM-Nakai2008}
	Given a Young function \(\Phi\), a function \(\phi \in \mathcal{G}\), and a set \(Q \in\mathfrak{B}_Q\), the following integral inequality holds:
	\begin{align*}
		\int_{Q} f(x)g(x) \, dx \leq 2|Q|\phi(|Q|)\left\| f\right\|_{\Phi,\phi,Q}\left\|g\right\|_{\widetilde{\Phi},\phi,Q},
	\end{align*}
	where \(\widetilde{\Phi}\) denotes the complementary Young function of \(\Phi\).
\end{lemma}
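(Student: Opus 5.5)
We prove the generalized Hölder inequality $\int_Q fg\,dx\le 2|Q|\phi(|Q|)\|f\|_{\Phi,\phi,Q}\|g\|_{\widetilde\Phi,\phi,Q}$ by normalizing both functions, applying Young's inequality pointwise, and integrating. Throughout, $\|\cdot\|_{\Phi,\phi,Q}$ is the generalized Luxemburg norm from Appendix \ref{Section A2}:
\[
\|f\|_{\Phi,\phi,Q}=\inf\Bigl\{\lambda>0:\ \frac{1}{|Q|\phi(|Q|)}\int_Q\Phi\Bigl(\frac{|f(x)|}{\lambda}\Bigr)dx\le 1\Bigr\}.
\]

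\textbf{Reductions.} Since $|fg|\ge fg$, we may assume $f,g\ge0$.

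\begin{itemize}
\item If $\|f\|_{\Phi,\phi,Q}=0$, then $\int_Q\Phi(f/\lambda)\,dx\le|Q|\phi(|Q|)$ for every $\lambda>0$. Letting $\lambda\to0$ forces $f=0$ a.e. on $Q$, because $\Phi(s)\to\infty$ as $s\to\infty$ and $\Phi(s)>0$ for $s>0$ (the nondegenerate Young functions of class $\mathscr{Y}$). Then both sides vanish. The same holds if $\|g\|_{\widetilde{\Phi},\phi,Q}=0$.
\item If either norm is infinite, the inequality is trivial.
\end{itemize}

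So assume $0<a:=\|f\|_{\Phi,\phi,Q}<\infty$ and $0<b:=\|g\|_{\widetilde{\Phi},\phi,Q}<\infty$.

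\textbf{Unit modular at the norm.} We need
\[
\int_Q\Phi(f/a)\,dx\le|Q|\phi(|Q|),
\]
and the analogous bound for $g$ with $\widetilde\Phi$. Take $\lambda_k\downarrow a$ admissible in the infimum. Since $\Phi$ is increasing and left-continuous, $\Phi(f/\lambda_k)\uparrow\Phi(f/a)$ pointwise. Monotone convergence then passes the bound $\int_Q\Phi(f/\lambda_k)\,dx\le|Q|\phi(|Q|)$ to the limit.

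\textbf{Young's inequality.} For all $s,t\ge0$ we have $st\le\Phi(s)+\widetilde{\Phi}(t)$. This follows directly from the definition $\widetilde\Phi(t)=\sup_{s\ge0}\{st-\Phi(s)\}$. Applying it with $s=f(x)/a$ and $t=g(x)/b$ and integrating over $Q$ gives
\[
\frac{1}{ab}\int_Q fg\,dx\le\int_Q\Phi(f/a)\,dx+\int_Q\widetilde\Phi(g/b)\,dx\le 2|Q|\phi(|Q|).
\]
Multiplying through by $ab$ yields the claim.

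\textbf{Main obstacle.} The only point needing care is the unit-modular bound, specifically the left-continuity of $\Phi$ when $\Phi$ may jump to $+\infty$. This is handled by the convention in $\mathscr{Y}$ that Young functions are left-continuous, so monotone convergence applies. Nothing about $\phi$ beyond the positivity of $\phi(|Q|)$ is used.
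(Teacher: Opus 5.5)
Your proof is correct and is the standard argument: the paper gives no proof of this lemma and only cites Nakai's Lemma 9.2. That lemma is proved exactly as you do, by normalizing $f$ and $g$ by their generalized Luxemburg norms, applying Young's inequality $st\le\Phi(s)+\widetilde{\Phi}(t)$ pointwise, and integrating over $Q$.

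One simplification is available. Run the same computation with an arbitrary admissible $\lambda$ for $f$ and an arbitrary admissible $\eta$ for $g$; this gives $\int_Q|fg|\,dx\le 2|Q|\phi(|Q|)\,\lambda\eta$. Taking infima over $\lambda$ and $\eta$ then removes the need for both the zero-norm case and the left-continuity/monotone-convergence step.
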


\begin{definition}
	Let \(\Phi \in \mathscr{Y}\) and \(\phi \in \mathscr{G}\), and let \(\widetilde{\Phi}\) be the complementary function of \(\Phi\). We say that a kernel \(\mathscr{P}\) satisfies the \(\mathbb{B}\)-valued \(\Phi\)-\(\phi\)-H\"ormander condition if
	\begin{align}\label{Hormander 1}
		\sup_{Q \in\mathfrak{B}_Q, \, x \in \frac{1}{2} Q}
		|Q|\phi(|Q|) \Bigl\|\left\| \mathscr{P}(x) \right\|_{\mathbb{B}}\Bigr\|_{\widetilde{\Phi},\phi,2Q \setminus Q} < \infty,
	\end{align}
	and 
	\begin{align}\label{Hormander 2}
		\sup_{\substack{Q\in\mathfrak{B}_Q \\ x,x' \in \frac{1}{2} Q}} \sum _{j=1}^{\infty} |2^jQ|\phi(|2^jQ|)\Bigl\| \left\| \mathscr{P}(x) - \mathscr{P}(x') \right\|_{\mathbb{B}} \Bigr\|_{ \widetilde{\Phi}, \phi, \widetilde{B}_j(Q)} < \infty.
	\end{align}
	where \(\widetilde{B}_j(Q) := 2^j Q \setminus 2^{j-1} Q\) for \(j= 1,2,\dots\). Here, \(\mathscr{P}(x)\) takes values as given in (\ref{Iso-Kernel 1}), (\ref{Iso-Kernel 2}), and (\ref{Iso-Kernel 3}), and \(\mathbb{B}\) takes values as in (\ref{U 1}), (\ref{U 2}), and (\ref{U 3}).
\end{definition}

It is worth noting that (\ref{Hormander 1}) and (\ref{Hormander 2}) are generalizations of the case \(m=1\) in \cite[Definition 2.12]{Caomingming2023}. The establishment of these two conditions will help optimize the proof process that intrinsic square operators are \(\Psi\)-BOO.

\begin{theorem}
	Suppose that \((\Phi,\Psi,\phi,\psi)\in\mathscr{Y} \uplus \mathscr{G}\). Let \(\mathscr{P}(x)\) be the intrinsic square operators satisfying the \(\mathbb{B}\)-valued \(\Phi\)-\(\phi\)-H\"ormander condition, where \(\mathfrak{G}(x)\) is taken from (\ref{G 1}), (\ref{G 2}) and (\ref{G 3}), and \(\mathbb{B}\) is taken from (\ref{U 1}), (\ref{U 2}) and (\ref{U 3}). Then \(\mathfrak{G}(x)\) is a \(\mathbb{B}\)-valued \(\Psi\)-BOO with respect to \(\mathfrak{B}_Q\).
\end{theorem}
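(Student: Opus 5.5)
We verify the two conditions of Definition \ref{BOO} directly, with the ball-basis \(\mathfrak{B}_Q\) of axis-parallel cubes and \(Q^\P = 5Q\). The \(\mathbb{B}\)-valued linear operator is \(\mathscr{T} = \mathscr{A}\) (one of (\ref{Iso-Tf 1}), (\ref{Iso-Tf 2}), (\ref{Iso-Tf 3})) with kernel \(\mathscr{P}\), so that \(\mathfrak{G}f(x) = \|\mathscr{A}f(x)\|_{\mathbb{B}}\). Off the support of \(g\), \(\mathscr{A}g(x) = \int \mathscr{P}(x-y) g(y)\,dy\) as a \(\mathbb{B}\)-valued integral. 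Minkowski's integral inequality then gives
\[
\|\mathscr{A}g(x)\|_{\mathbb{B}} \le \int \|\mathscr{P}(x-y)\|_{\mathbb{B}} |g(y)|\,dy .
\]
The only analytic tools are this bound, the H\"older inequality \cite[Lemma 9.2]{OM-Nakai2008} stated above, and the two H\"ormander conditions (\ref{Hormander 1}) and (\ref{Hormander 2}). We work with the kernel translated by \(x\), so that these conditions are uniform in \(x \in \frac12 Q\).

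\textbf{Condition (\(\Psi\)-BOO-I).} Given \(B_0 = Q_0\) with \(5Q_0 \subsetneq \mathbb{R}^n\), take \(B = 2Q_0\) (any fixed dilate works), so \(B^\P = 10Q_0\). For \(x \in Q_0\),
\[
\mathscr{A}(f\mathbb{I}_{10Q_0})(x) - \mathscr{A}(f\mathbb{I}_{5Q_0})(x) = \mathscr{A}(f\mathbb{I}_{10Q_0\setminus 5Q_0})(x).
\]
Since \(x \in Q_0\), the distance from \(x\) to the region \(10Q_0\setminus 5Q_0\) is comparable to \(\ell(Q_0)\). Let \(Q = 20Q_0\), so that \(x \in \frac12 Q\) and \(10Q_0\setminus 5Q_0\) lies inside an annulus \(2Q\setminus Q\) (after adjusting the dilation constants, or covering by boundedly many such annuli). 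H\"older then gives
\[
\|\mathscr{A}(f\mathbb{I}_{10Q_0\setminus5Q_0})(x)\|_{\mathbb{B}} \le 2|Q|\phi(|Q|)\,\bigl\|\|\mathscr{P}(x-\cdot)\|_{\mathbb{B}}\bigr\|_{\widetilde\Phi,\phi,2Q\setminus Q}\,\|f\|_{\Phi,\phi,2Q}.
\]
The first factor is bounded by (\ref{Hormander 1}). It remains to compare \(\|f\|_{\Phi,\phi,2Q}\) with \(\|f\|_{\Phi,\phi,B^\P}\), which needs the comparison of Luxemburg norms over comparable cubes (Propositions \ref{propA} and \ref{propB}). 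Alternatively, we choose \(B\) large enough that \(2Q \subset B^\P\) with comparable measure, for instance \(B = 8Q_0\), so that \(B^\P = 40Q_0 = 2Q\); the cube \(B\) is at our disposal.

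\textbf{Condition (\(\Psi\)-BOO-II).} Fix a cube \(Q'\) and \(x, x' \in Q'\). The difference equals
\[
\mathscr{A}(f\mathbb{I}_{\mathbb{R}^n\setminus 5Q'})(x) - \mathscr{A}(f\mathbb{I}_{\mathbb{R}^n\setminus 5Q'})(x') = \int_{\mathbb{R}^n\setminus5Q'} \bigl(\mathscr{P}(x-y)-\mathscr{P}(x'-y)\bigr) f(y)\,dy .
\]
Take \(Q = 2Q'\), so that \(x, x' \in \frac12 Q\) and \(\mathbb{R}^n\setminus 5Q' \subset \bigcup_{j\ge1}\widetilde B_j(Q)\). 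On each \(\widetilde B_j(Q)\) we apply Minkowski and then H\"older, obtaining the bound
\[
\sum_{j\ge1} 2|2^jQ|\phi(|2^jQ|)\,\bigl\|\|\mathscr{P}(x-\cdot)-\mathscr{P}(x'-\cdot)\|_{\mathbb{B}}\bigr\|_{\widetilde\Phi,\phi,\widetilde B_j(Q)}\;\|f\|_{\Phi,\phi,2^jQ}.
\]
Since \(2^jQ \supset Q'\), each \(\|f\|_{\Phi,\phi,2^jQ} \le \langle\|f\|\rangle_{\Phi,\phi,Q'}\). Pulling this supremum out, the remaining sum is controlled by (\ref{Hormander 2}), which gives \(\mathscr{C}_2\).

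\textbf{Main obstacle.} The delicate point is that \(\mathscr{A}\) is defined through \(\sup_{\varphi\in{\rm Lip}_\alpha}\), so it is not literally linear. We handle this as in \cite{Caomingming2023}: fix the extremal choice, or enlarge \(\mathbb{B}\) to \(\ell^\infty({\rm Lip}_\alpha;\mathbb{B})\), so that \(\mathscr{A}\) becomes a \(\mathbb{B}\)-valued linear operator with kernel \(\mathscr{P}\). The differences above are then taken at the linear level, and sublinearity only produces inequalities in the correct direction. A second technical issue is that \(f \in L^{(\Psi,\psi)}\) need not be integrable at infinity. We justify the kernel representation first for \(f\mathbb{I}_{2^NQ}\), where the sums are finite, and then let \(N\to\infty\); the uniform bound from (\ref{Hormander 2}) guarantees convergence.
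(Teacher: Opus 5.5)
Your proof is correct and is essentially the paper's argument. For (\(\Psi\)-BOO-I) you take \(B=2Q_0\) and bound \(\mathscr{A}(f\,\mathbb{I}_{10Q_0\setminus 5Q_0})(x)\) by Minkowski, Nakai's H\"older inequality and (\ref{Hormander 1}); for (\(\Psi\)-BOO-II) you split the complement of \(5Q'\) into the dyadic annuli \(\widetilde{B}_j(2Q')\), apply H\"older on each, bound \(\|f\|_{\Phi,\phi,2^j(2Q')}\le\langle\|f\|\rangle_{\Phi,\phi,Q'}\), and sum via (\ref{Hormander 2}). Your use of \(2Q'\), so that \(x,x'\in\frac12(2Q')\), is in fact slightly more careful than the paper.

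Two small slips:
\begin{enumerate}
\item In (\(\Psi\)-BOO-I) the right auxiliary cube is \(5Q_0\): then \(2(5Q_0)\setminus 5Q_0=B^\P\setminus B_0^\P\) exactly and \(Q_0\subset\frac12(5Q_0)\). With \(Q=20Q_0\), the annulus \(10Q_0\setminus5Q_0\) is not contained in \(2Q\setminus Q\).
\item In your linearization remark, the supremum over \({\rm Lip}_\alpha\) must sit inside the \(\mathbb{B}\)-norm, e.g.\ \(L^2(dt/t;\ell^\infty({\rm Lip}_\alpha))\) for \(g_\alpha\). The space \(\ell^\infty({\rm Lip}_\alpha;\mathbb{B})\) puts the supremum outside, so it only reproduces the supremum of the individual square functions, which is in general smaller than \(\mathfrak{G}f\).
\end{enumerate}
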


\begin{proof}
	Let \(Q_0 \in \mathfrak{B}_Q\) with \(Q_0 \subsetneq \mathbb{R}^n\). Take a point \(x \in Q_0\) and denote \(Q = 2Q_0\). Consider \(\mathscr{A}f(x)\) as defined in (\ref{Iso-Tf 1}), (\ref{Iso-Tf 2}), and (\ref{Iso-Tf 3}). Using (\ref{Hormander 1}) and H\"older's inequality, we obtain
	\begin{align*}
		\bigl\| \mathscr{A}(f\,&\mathbb{I}_{Q^\P})(x) - \mathscr{A} (f \, \mathbb{I}_{Q_0^\P})(x) \bigr\|_{\mathbb{B}}\\
		&\leq \left\| \int_{Q^\P \setminus Q_0^\P} \mathscr{P}(x-y) f(y)dy\, \right\|_{\mathbb{B}} \leq  \int_{Q^\P \setminus Q_0^\P} \left\|\mathscr{P}(x-y)\right\|_{\mathbb{B}} |f(y)|\, dy\\
		&\leq 2|Q|\phi(|Q|)\left\| \left\|\mathscr{P}(x-y)\right\|_{\mathbb{B}}\right\|_{\widetilde{\Phi},\phi,Q^\P \setminus Q_0^\P}\left\|f\right\|_{\Phi,\phi,Q^\P \setminus Q_0^\P} \lesssim \left\|f\right\|_{\Phi,\phi,Q^\P}.
	\end{align*}
	The last inequality follows from Proposition \ref{propB}. This yields the condition (\(\Psi\)-BOO-I).
	
	Next, fix an arbitrary cube \(Q \in \mathfrak{B}_Q\) and two points \(x, x' \in Q \). Using (\ref{Hormander 2}), the properties of the class \(\mathscr{G}\), and H\"older's inequality, we have
	\begin{align*}
		\bigl\| &\left(\mathscr{A}f-\mathscr{A}(f\,\mathbb{I}_{Q^\P})\right)(x) - \left(\mathscr{A}f-\mathscr{A}(f\,\mathbb{I}_{Q^\P})\right)(x') \bigr\|_{\mathbb{B}}\\
		&\leq \left\| \int_{\mathbb{R}^n \setminus 5Q} \left(\mathscr{P}(x-y)-\mathscr{P}(x'-y)\right) f(y)\,dy\, \right\|_{\mathbb{B}}  \\
		&\leq \sum_{j=1}^{\infty} \int_{\widetilde{B}_j(Q)} \left\| \mathscr{P}(x-y) - \mathscr{P} (x'-y)\right\|_{\mathbb{B}} |f(y)|\, dy\\
		&\leq 2 \sum_{j=1}^{\infty} |\widetilde{B}_j(Q)| \phi(|\widetilde{B}_j (Q)|) \left\| \left\| \mathscr{P}(x-y) - \mathscr{P} (x'-y) \right\|_{\mathbb{B}} \right\|_{ \widetilde{\Phi}, \phi, \widetilde{B}_j(Q)}\left\|f\right\|_{\Phi,\phi,\widetilde{B}_j(Q)}  \\
		&\lesssim \sum_{j=1}^{\infty} |2^jQ|\phi(|2^jQ|)\left\| \left\| \mathscr{P}(x-y) - \mathscr{P} (x'-y)\right\|_{\mathbb{B}} \right\|_{ \widetilde{\Phi},\phi,\widetilde{B}_j(Q)}\left\|f\right\|_{\Phi,\phi,2^jQ}  \\
		&\lesssim \sup_{j \geq 0}\left\|f\right\|_{\Phi,\phi,2^jQ} \leq \langle \|f\|\rangle_{\Phi,\phi,Q}.
	\end{align*}
	This implies that the condition (\(\Psi\)-BOO-II) holds.
\end{proof}

As a consequence of Theorem \ref{main} and Theorem \ref{main2}, we have the following theorem.
\begin{theorem}
	Let \(\mathfrak{B}_E\) be a ball-basis in \(\mathbb{R}^n\) and let \(\lambda > 3\mathscr{C}_0^6\). Suppose that \((\Phi,\Psi,\phi,\psi) \in \mathscr{Y} \uplus \mathscr{G}\) and \((\Phi, \phi) \in E_{mb}\). Denote by \(\mathfrak{G}(x)\) the object defined in (\ref{G 1}), (\ref{G 2}) and (\ref{G 3}), and by \(\mathbb{B}\) the one defined in (\ref{U 1}), (\ref{U 2}) and (\ref{U 3}). We then have the following:
	
	\begin{itemize*}
		\item[({\it a})] There exist two \(\frac{1}{2 \mathscr{C}_0^3}\)-sparse families \(\mathcal{S}_1, \, \mathcal{S}_2 \subset \mathfrak{B}_E\) such that for every \(f \in L^{(\Psi,\psi)}(\mathbb{R}^n)\subset L^{(\Phi,\phi)}(\mathbb{R}^n)\), every \(B \in \mathfrak{B}_E\), and \(a.e. \, x \in B\),
		\begin{align*}
			\|\mathscr{A}f(x)\|_\mathbb{B} \lesssim \left(\mathscr{C}_1(\mathfrak{G}) + \mathscr{C}_2(\mathfrak{G}) + \|\mathfrak{G}\|_{L^{(\Psi, \psi)}(\mathbb{R}^n) \to wL^{(\Psi,\psi)}(\mathbb{R}^n)} \right) \cdot \left[ \mathcal{A}_{\mathcal{S}_1, \Phi,\phi} f(x) + \mathcal{A}_{\mathcal{S}_2, \Phi,\phi} f(x) \right],
		\end{align*}
		where \(\mathscr{A}f(x)\) is taken from (\ref{Iso-Tf 1}), (\ref{Iso-Tf 2}) and (\ref{Iso-Tf 3}).
		
		\item[({\it b})] If, in addition, \(\mathfrak{B}_E\) satisfies the Besicovitch \(\mathfrak{N}\)-condition, then
		\begin{align*}
			\|\mathfrak{G}\|_{L^{(\Psi, \psi)}(\mathbb{R}^n) \to L^{(\Psi,\psi)}(\mathbb{R}^n)} \lesssim \mathscr{K} \cdot \mathfrak{N} \cdot \left(\mathscr{C}_1(\mathfrak{G}) + \mathscr{C}_2(\mathfrak{G}) + \|\mathfrak{G}\|_{L^{(\Psi, \psi)}(\mathbb{R}^n) \to wL^{(\Psi,\psi)}(\mathbb{R}^n)} \right).
		\end{align*}
	\end{itemize*}
\end{theorem}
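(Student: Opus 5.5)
The statement is a corollary: it is obtained by plugging the intrinsic square operators into Theorem \ref{main} and Theorem \ref{main2}. Everything reduces to checking that the hypotheses of those theorems hold for \(\mathfrak{G}\) with \(\mathbb{B}\) from (\ref{U 1})--(\ref{U 3}) and \(\mathscr{T}=\mathscr{A}\) from (\ref{Iso-Tf 1})--(\ref{Iso-Tf 3}).

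The first step is to fix the ball-basis. The \(\Psi\)-BOO property was proved for the cube basis \(\mathfrak{B}_Q\) (with \(Q^\P=5Q\)), whereas the statement is phrased for \(\mathfrak{B}_E\). I would either run the previous theorem verbatim with Euclidean balls, replacing \(2Q\), \(5Q\) and the annuli \(2^jQ\setminus 2^{j-1}Q\) by the corresponding dilated balls \(B(x_0,\ell r)\), or note that balls and cubes are comparable. Under comparability, the Hörmander conditions (\ref{Hormander 1})--(\ref{Hormander 2}) transfer with constants depending only on \(n\), using Proposition \ref{propB} to pass between Luxemburg averages over comparable sets. Either way, \(\mathfrak{G}\) becomes a \(\mathbb{B}\)-valued \(\Psi\)-BOO with respect to \(\mathfrak{B}_E\), and its constants \(\mathscr{C}_1(\mathfrak{G})\), \(\mathscr{C}_2(\mathfrak{G})\) are controlled by the Hörmander constants.

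The second step is to verify the remaining hypotheses. The linear representation \(\mathfrak{G}f(x)=\|\mathscr{A}f(x)\|_{\mathbb{B}}\) holds by construction. The conditions \((\Phi,\Psi,\phi,\psi)\in\mathscr{Y}\uplus\mathscr{G}\) and \((\Phi,\phi)\in E_{mb}\) are assumed. The membership \(\mathfrak{G}\in\mathbb{W}_{\Psi,\psi,\lambda}\) must be understood as part of the hypotheses, exactly as in the \(\omega\)-Calderón–Zygmund corollary. This includes the boundedness \(L^{(\Psi,\psi)}\to wL^{(\Psi,\psi)}\), which enters the constant, so I would make this assumption explicit. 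Theorem \ref{main} then yields the family \(\mathcal{S}\), a union of two \(\frac{1}{2\mathscr{C}_0^3}\)-sparse families \(\mathcal{S}_1,\mathcal{S}_2\). Since \(\mathcal{A}_{\mathcal{S},\Phi,\phi}\le \mathcal{A}_{\mathcal{S}_1,\Phi,\phi}+\mathcal{A}_{\mathcal{S}_2,\Phi,\phi}\) by (\ref{sparse operator}), this gives (a).

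For (b), I would apply Theorem \ref{main2} directly, under the Besicovitch \(\mathfrak{N}\)-condition, to get the bound \(\mathscr{K}\cdot\mathfrak{N}\cdot\mathscr{C}(\mathfrak{G})\). If spelled out, the chain runs as follows. The sparse bound from (a) is combined with the pointwise estimate (\ref{ine main2}), which gives \(\mathcal{A}_{\mathcal{S},\Phi,\phi}f\lesssim\mathfrak{N}\,\mathcal{M}_{\mathfrak{B}_E,\Phi,\phi}f\). Next comes the monotonicity (\ref{ine main3}) of the Orlicz–Morrey norm. Finally, the boundedness of \(\mathcal{M}_{\mathfrak{B}_E,\Phi,\phi}\) on \(L^{(\Psi,\psi)}\) from Section \ref{Section 2.1} contributes the factor \(\mathscr{K}\) through the embedding \(\|f\|_{L^{(\Phi,\phi)}}\le\mathscr{K}\|f\|_{L^{(\Psi,\psi)}}\) together with \(E_{mb}\).

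I expect the only real obstacle to be the first step: the transfer from \(\mathfrak{B}_Q\) to \(\mathfrak{B}_E\). In particular, (\(\Psi\)-BOO-II) needs the dyadic-annulus decomposition outside \(B^\P\) and the bound \(\sup_j\|f\|_{\Phi,\phi,2^jB}\le\langle\|f\|\rangle_{\Phi,\phi,B}\), which requires every dilate \(2^jB\) to be a ball of \(\mathfrak{B}_E\) containing \(B\). This holds for Euclidean balls. Everything else is a direct citation.
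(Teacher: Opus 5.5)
Your proposal is correct and takes the same route as the paper, which derives this statement in one line as a consequence of Theorem~\ref{main} and Theorem~\ref{main2}, applied to the intrinsic square operators through the preceding \(\Psi\)-BOO theorem; your chain for (a) and (b) is that citation written out. Your extra steps are justified and fill in hypotheses the paper's statement leaves tacit: reconciling \(\mathfrak{B}_Q\) with \(\mathfrak{B}_E\), and making explicit \(\mathfrak{G}\in\mathbb{W}_{\Psi,\psi,\lambda}\) and the \(\mathbb{B}\)-valued \(\Phi\)-\(\phi\)-H\"ormander condition. One caveat: Theorem~\ref{main} needs a linear \(\mathscr{T}\), but \(\mathscr{A}\) in (\ref{Iso-Tf 1})--(\ref{Iso-Tf 3}) is only sublinear, because the supremum over \(\mathrm{Lip}_\alpha\) sits inside, so ``holds by construction'' has to be read via a linearization with \(\varphi\) moved into the index set. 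That is harmless for the conclusion, which only involves \(\|\mathscr{A}f(x)\|_{\mathbb{B}}=\mathfrak{G}f(x)\).
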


\subsection{Carleson-type operators}\label{Section 2.4}

Let \((X,\,\mathfrak{M},\,\mu)\) be a measure space endowed with a ball-basis \(\mathfrak{B}\). Consider a family of \(\Psi\)-BOOs \(\{T_{\alpha}\}_{\alpha \in \mathfrak{A}}\) on \((X,\, \mathfrak{M},\, \mu)\), where \(\mathfrak{A}\) is an index set. We can define a Carleson-type operator of the form
\[
T^{\mathfrak{A}}f(x):=\sup_{\alpha \in \mathfrak{A}} \left| T_{\alpha}f(x) \right|.
\]
Here, we define a Banach space as follows:  
\[
\mathbb{B}:=\left\{ F: \mathfrak{A} \to \mathbb{R} \;\big|\; \left\|f\right\|_{\mathbb{B}} := \sup_{\alpha \in \mathfrak{A}} \left|f(\alpha)\right| < \infty \right\}.
\]
Then, according to Definition \ref{BOO}, the following theorem holds trivially.

\begin{theorem}
	Let \((\Phi,\Psi,\phi,\psi)\in\mathscr{Y} \uplus \mathscr{G}\) and let \((X,\,\mathfrak{M},\,\mu)\) be a measure space endowed with a ball-basis \(\mathfrak{B}\). Suppose that \(\{T_{\alpha}\}_{\alpha \in \mathfrak{A}}\) is a family of \(\Psi\)-BOOs satisfying  
	\begin{align}\label{Carleson-type-C}
		\mathscr{C}_1^{\mathfrak{A}} := \sup_{\alpha \in \mathfrak{A}} \mathscr{C}_1 (T_{\alpha}) < \infty \quad \text{and} \quad \mathscr{C}_2^{\mathfrak{A}} := \sup_{\alpha \in \mathfrak{A}}\mathscr{C}_2(T_{\alpha}) < \infty.
	\end{align}
	Then \(T^{\mathfrak{A}}\) is a \(\mathbb{B}\)-valued \(\Psi\)-BOO with respect to \(\mathfrak{B}\), and its constants satisfy  
	\[
	\mathscr{C}_1(T^{\mathfrak{A}}) \le \mathscr{C}_1^{\mathfrak{A}} \quad \text{and} \quad \mathscr{C}_2(T^{\mathfrak{A}}) \le \mathscr{C}_2^{\mathfrak{A}} .
	\]
\end{theorem}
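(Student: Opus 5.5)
The plan is to linearize the supremum through the Banach space \(\mathbb{B}\) of bounded functions on \(\mathfrak{A}\), and then check the two conditions of Definition \ref{BOO} one index at a time, taking the supremum over \(\alpha\) only at the very end.

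\textbf{Setting up the linear operator.} Each \(T_\alpha\) is a \(\Psi\)-BOO. In the scalar case its linear operator is \(\mathscr{T}_\alpha = T_\alpha\). Define the \(\mathbb{B}\)-valued operator
\[
\mathscr{T}^{\mathfrak{A}}f(x) := \{T_\alpha f(x)\}_{\alpha\in\mathfrak{A}},
\]
so that \(\|\mathscr{T}^{\mathfrak{A}}f(x)\|_{\mathbb{B}} = \sup_\alpha |T_\alpha f(x)| = T^{\mathfrak{A}}f(x)\). Linearity of \(\mathscr{T}^{\mathfrak{A}}\) is inherited coordinatewise from linearity of each \(T_\alpha\). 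Membership \(\mathscr{T}^{\mathfrak{A}}f(x)\in\mathbb{B}\) is only needed at points where \(T^{\mathfrak{A}}f(x)<\infty\). If one insists on finiteness, I would remark that it follows from the two estimates below together with finiteness at a single point.

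\textbf{Condition (\(\Psi\)-BOO-I).} Fix \(B_0\in\mathfrak{B}\) with \(B_0^\P\subsetneq X\). The main obstacle is that the enlarged ball \(B\supsetneq B_0\) in (\(\Psi\)-BOO-I) is a priori chosen separately for each \(\alpha\). I would avoid this by observing that every verification in this paper (see the maximal-operator proof, and the CZ and intrinsic cases) produces a ball \(B\) depending only on \(B_0\) and \(\mathfrak{B}\), not on the operator. So I would read the hypothesis (\ref{Carleson-type-C}) as a uniform family in which a common choice \(B=B(B_0)\) works for all \(\alpha\); this is the natural reading of ``holds trivially''. With this \(B\), for \(x\in B_0\),
\[
\bigl\|\mathscr{T}^{\mathfrak{A}}(f\mathbb{I}_{B^\P})(x)-\mathscr{T}^{\mathfrak{A}}(f\mathbb{I}_{B_0^\P})(x)\bigr\|_{\mathbb{B}}
=\sup_\alpha\bigl|T_\alpha(f\mathbb{I}_{B^\P})(x)-T_\alpha(f\mathbb{I}_{B_0^\P})(x)\bigr|
\le \sup_\alpha \mathscr{C}_1(T_\alpha)\|f\|_{\Phi,\phi,B^\P}.
\]
Taking the supremum over \(x\in B_0\) gives \(\mathscr{C}_1(T^{\mathfrak{A}})\le\mathscr{C}_1^{\mathfrak{A}}\). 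If a common \(B\) is not assumed, I would state this as an explicit standing convention.

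\textbf{Condition (\(\Psi\)-BOO-II).} No choice of ball is involved here. For \(B\in\mathfrak{B}\) and \(x,x'\in B\), the \(\mathbb{B}\)-norm of the difference is
\[
\sup_\alpha\bigl|(T_\alpha f-T_\alpha(f\mathbb{I}_{B^\P}))(x)-(T_\alpha f-T_\alpha(f\mathbb{I}_{B^\P}))(x')\bigr|.
\]
Each term is at most \(\mathscr{C}_2(T_\alpha)\langle\|f\|\rangle_{\Phi,\phi,B}\). Taking the supremum over \(\alpha\) and then over \(x,x'\in B\) yields \(\mathscr{C}_2(T^{\mathfrak{A}})\le\mathscr{C}_2^{\mathfrak{A}}\), which completes the proof.
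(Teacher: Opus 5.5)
Your route is the paper's route. The paper gives no argument beyond saying the result follows ``trivially'' from Definition~\ref{BOO} with $\mathbb{B}$ the bounded functions on $\mathfrak{A}$, and your coordinatewise check of ($\Psi$-BOO-II) is complete.

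The point you raise about ($\Psi$-BOO-I) is a genuine gap. You identify it correctly, but you handle it only by assuming it away. Definition~\ref{BOO} lets the enlarged ball $B_\alpha\supsetneq B_0$ depend on the operator, while (\ref{Carleson-type-C}) bounds only the constants. So nothing in the hypotheses yields one ball serving all $\alpha$ at once. That the paper's concrete examples use operator-independent balls is not an argument for an arbitrary family.

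What you actually prove is the theorem under an extra hypothesis: for each $B_0$ there is a ball that works for every $\alpha$. That hypothesis has to be written into the statement. The paper's ``trivially'' tacitly relies on the same assumption. The paper itself gives two weaker substitutes:
\begin{itemize}
\item By Lemma~\ref{Lm 3.3}(1), it suffices that all $B_\alpha$ contain a common ball $B\supsetneq B_0$. Then $\Delta_{T_\alpha}(B_0,B)\le\max\{1,C_{ic}\}\,\mathscr{C}_1(T_\alpha)$, which gives the constant $\max\{1,C_{ic}\}\,\mathscr{C}_1^{\mathfrak A}$ rather than $\mathscr{C}_1^{\mathfrak A}$.
\item Suppose $\mathfrak{B}$ satisfies the doubling condition, $(\Psi,\psi)\in E_{mb}$ and $T^{\mathfrak{A}}\in\mathbb{W}_{\Psi,\psi}$. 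Then Lemma~\ref{Lm 3.4}(d), applied to $\mathscr{T}^{\mathfrak A}$, turns your ($\Psi$-BOO-II) bound into ($\Psi$-BOO-I) with no common-ball assumption. The constant becomes $\lesssim\mathscr{C}_2^{\mathfrak A}+\|T^{\mathfrak A}\|_{L^{(\Psi,\psi)}\to wL^{(\Psi,\psi)}}$ instead of $\mathscr{C}_1^{\mathfrak A}$. This is harmless for Theorem~\ref{main}, where only $\mathscr{C}(T)$ enters.
\end{itemize}

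Your side remark on finiteness is incorrect. Both conditions control only differences, never the size of $T_\alpha(f\,\mathbb{I}_{B^\P})$, so finiteness at one point cannot propagate. Take the multiplication operators $T_\alpha f=\alpha\,\mathbb{I}_E f$, $\alpha\in\mathbb{N}$. They satisfy ($\Psi$-BOO-I) and ($\Psi$-BOO-II) with any positive constants, because every ball lies in its hull and $B_0^\P\subset B^\P$, so all the relevant differences vanish identically. Yet $\sup_\alpha|T_\alpha f|$ is $0$ off $E$ and $+\infty$ on $E\cap\{f\neq0\}$.

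So $\mathscr{T}^{\mathfrak A}f(x)\in\mathbb{B}$ is a further hypothesis. In practice one gets it almost everywhere from the weak-type bound on $T^{\mathfrak A}$ assumed in the subsequent application theorem.
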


Combining Theorem \ref{main} and Theorem \ref{main2}, we arrive at the following result.
\begin{theorem}
	Let \((X,\,\mathfrak{M},\,\mu)\) be a measure space endowed with a ball-basis \(\mathfrak{B}\) and let \(\lambda > 3\mathscr{C}_0^6\). Suppose that \((\Phi,\Psi,\phi,\psi)\in\mathscr{Y} \uplus \mathscr{G}\) and \((\Phi, \phi) \in E_{mb}\). Let \(\{T_{\alpha}\}_{\alpha \in \mathfrak{A}}\) be a family of \(\Psi\)-BOOs satisfying (\ref{Carleson-type-C}), and \(T^{\mathfrak{A}} \in \mathbb{W}_{\Psi, \psi, \lambda}\). The following assertions are valid:
	\begin{itemize*}
		\item[({\it a})] There exist two \(\frac{1}{2 \mathscr{C}_0^3}\)-sparse families \(\mathcal{S}_1, \, \mathcal{S}_2 \subset \mathfrak{B}\) such that for every \(f \in L^{(\Psi,\psi)}(X)\subset L^{(\Phi,\phi)}(X)\), every \(B \in \mathfrak{B}\), and \(a.e. \, x \in B\),
		\begin{align*}
			|T^{\mathfrak{A}}f(x)| \lesssim \left(\mathscr{C}_1(T^{\mathfrak{A}}) + \mathscr{C}_2(T^{\mathfrak{A}}) + \|T^{\mathfrak{A}}\|_{L^{(\Psi, \psi)}(\mathbb{R}^n) \to wL^{(\Psi,\psi)}(\mathbb{R}^n)} \right) \cdot \left[ \mathcal{A}_{\mathcal{S}_1, \Phi,\phi} f(x) + \mathcal{A}_{\mathcal{S}_2, \Phi,\phi} f(x) \right].
		\end{align*}
		
		\item[({\it b})] If, furthermore, \(\mathfrak{B}\) satisfies the Besicovitch \(\mathfrak{N}\)-condition, then
		
		\begin{align*}
			\|T^{\mathfrak{A}}\|_{L^{(\Psi, \psi)}(X) \to L^{(\Psi,\psi)}(X)} \lesssim \mathscr{K} \cdot \mathfrak{N} \cdot  \left(\mathscr{C}_1(T^{\mathfrak{A}}) + \mathscr{C}_2(T^{\mathfrak{A}}) + \|T^{\mathfrak{A}}\|_{L^{(\Psi, \psi)}(\mathbb{R}^n) \to wL^{(\Psi,\psi)}(\mathbb{R}^n)} \right).
		\end{align*}
	\end{itemize*}
\end{theorem}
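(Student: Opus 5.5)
The statement is the corollary for Carleson-type operators. My plan is to read it off Theorem \ref{main} and Theorem \ref{main2}, applied to the single operator \(T^{\mathfrak{A}}\). The only hypothesis of those theorems not assumed directly is that \(T^{\mathfrak{A}}\) is a \(\mathbb{B}\)-valued \(\Psi\)-BOO. This is supplied by the preceding theorem, with \(\mathscr{C}_1(T^{\mathfrak{A}})\le\mathscr{C}_1^{\mathfrak{A}}\) and \(\mathscr{C}_2(T^{\mathfrak{A}})\le\mathscr{C}_2^{\mathfrak{A}}\), where the finiteness comes from (\ref{Carleson-type-C}).

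For completeness I would recheck that theorem briefly, since it is labelled trivial. Take \(\mathscr{T}^{\mathfrak{A}}f(x):=\{T_\alpha f(x)\}_{\alpha\in\mathfrak{A}}\), which is linear when each \(T_\alpha\) is linear (or after linearizing each \(T_\alpha\)). Then \(\|\mathscr{T}^{\mathfrak{A}}f(x)\|_{\mathbb{B}}=T^{\mathfrak{A}}f(x)\). For (\(\Psi\)-BOO-I), the sup-norm of the difference is \(\sup_\alpha|T_\alpha(f\mathbb{I}_{B^\P})(x)-T_\alpha(f\mathbb{I}_{B_0^\P})(x)|\). The same argument applies to (\(\Psi\)-BOO-II), giving the stated constants.

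This step is the main obstacle. The enlarged ball \(B\supsetneq B_0\) in (\(\Psi\)-BOO-I) may depend on \(\alpha\), so a single \(B\) working for all \(\alpha\) is needed. I would first handle this by choosing \(B\) as in the maximal-operator example, namely \(B=B_1^\P\) with \(B_1\) nearly minimal among larger balls meeting \(B_0\). I would then note that for the examples in this section the construction of \(B\) depends only on \(B_0\) and the basis, not on the operator. Failing that, the cleanest route is to include uniform choice of \(B\) as part of the hypothesis (\ref{Carleson-type-C}).

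With \(T^{\mathfrak{A}}\) a \(\mathbb{B}\)-valued \(\Psi\)-BOO, and with \(T^{\mathfrak{A}}\in\mathbb{W}_{\Psi,\psi,\lambda}\), \(\lambda>3\mathscr{C}_0^6\) and \((\Phi,\phi)\in E_{mb}\) assumed, Theorem \ref{main} gives a family \(\mathcal{S}\) that is a union of two \(\frac{1}{2\mathscr{C}_0^3}\)-sparse families \(\mathcal{S}_1,\mathcal{S}_2\). Since \(\mathcal{A}_{\mathcal{S},\Phi,\phi}\le\mathcal{A}_{\mathcal{S}_1,\Phi,\phi}+\mathcal{A}_{\mathcal{S}_2,\Phi,\phi}\) pointwise, this is (a). For (b), I would invoke Theorem \ref{main2} directly under the Besicovitch \(\mathfrak{N}\)-condition. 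Alternatively, I would chain (a), the pointwise bound (\ref{ine main2}), monotonicity (\ref{ine main3}), and the boundedness of \(\mathcal{M}_{\mathfrak{B},\Phi,\phi}\) on \(L^{(\Psi,\psi)}\) established in Section \ref{Section 2.1}. The factor \(\mathscr{K}\) enters through \(\|f\|_{L^{(\Phi,\phi)}}\le\mathscr{K}\|f\|_{L^{(\Psi,\psi)}}\).
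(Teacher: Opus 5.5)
Your overall route is the paper's. The paper obtains this theorem by applying Theorems \ref{main} and \ref{main2} to \(T^{\mathfrak{A}}\), and takes the \(\Psi\)-BOO property from the preceding theorem, which it says follows ``trivially'' from Definition \ref{BOO}.

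You are right that this step is not trivial. Condition (\(\Psi\)-BOO-II) is a universal statement, so it passes to \(\sup_\alpha\) with constant \(\mathscr{C}_2^{\mathfrak{A}}\). Condition (\(\Psi\)-BOO-I), however, only gives, for each \(\alpha\), some ball \(B_\alpha\supsetneq B_0\) with \(\Delta_{T_\alpha}(B_0,B_\alpha)\le\mathscr{C}_1^{\mathfrak{A}}\). None of your three remedies closes this.
\begin{enumerate}
\item A near-minimal ball is the right instinct. But Definition \ref{BOO} says nothing about \(\Delta_{T_\alpha}(B_0,B)\) for any \(B\neq B_\alpha\), so a transfer argument is required, and you do not give one.
\item The remark about the concrete examples of Section \ref{Section 2} does not apply to an arbitrary family.
\item Adding uniformity of \(B\) to (\ref{Carleson-type-C}) proves a weaker theorem.
\end{enumerate}

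The transfer uses the hypothesis \(T^{\mathfrak{A}}\in\mathbb{W}_{\Psi,\psi}\), which you never invoke at this point.

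First, take the near-minimal ball among the \(B_\alpha\) themselves. Set \(b:=\inf_\alpha\mu(B_\alpha)\ge\mu(B_0)>0\) and pick \(\alpha_0\) with \(\mu(B_{\alpha_0})<2b\). For every \(\alpha\) we have \(B_{\alpha_0}\cap B_\alpha\supset B_0\neq\varnothing\) and \(\mu(B_{\alpha_0})\le 2\mu(B_\alpha)\). The ball-hull control property then gives \(B_0\subsetneq B_{\alpha_0}\subset B_\alpha^\P\), and Lemma \ref{Lm 3.3}(1) gives \(\Delta_{T_\alpha}(B_0,B_{\alpha_0})\lesssim\Delta_{T_\alpha}(B_0,B_\alpha^\P)\).

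Next, for \(x\in B_0\subset B_\alpha\), write
\[
T_\alpha(f\mathbb{I}_{B_\alpha^{[2]}})-T_\alpha(f\mathbb{I}_{B_0^\P})=\bigl[T_\alpha(f\mathbb{I}_{B_\alpha^{[2]}})-T_\alpha(f\mathbb{I}_{B_\alpha^\P})\bigr]+\bigl[T_\alpha(f\mathbb{I}_{B_\alpha^\P})-T_\alpha(f\mathbb{I}_{B_0^\P})\bigr].
\]
\begin{itemize}
\item The second bracket is at most \(\mathscr{C}_1^{\mathfrak{A}}\|f\|_{\Phi,\phi,B_\alpha^\P}\lesssim\mathscr{C}_1^{\mathfrak{A}}\|f\|_{\Phi,\phi,B_\alpha^{[2]}}\). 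This follows from Proposition \ref{propB}, since \(\mu(B_\alpha^{[2]})\le\mathscr{C}_0\mu(B_\alpha^\P)\) and \(\phi\) is almost decreasing.
\item The first bracket is at most \(\Delta_{T^{\mathfrak{A}}}(B_\alpha,B_\alpha^\P)\,\|f\|_{\Phi,\phi,B_\alpha^{[2]}}\). Apply Lemma \ref{Lm 3.4}(a) to \(T^{\mathfrak{A}}\) itself; it needs only (\(\Psi\)-BOO-II) and the \(\mathbb{W}_{\Psi,\psi}\) condition, both available for \(T^{\mathfrak{A}}\). Since \(\mu(B_\alpha^\P)\le\mathscr{C}_0\mu(B_\alpha)\), this gives \(\Delta_{T^{\mathfrak{A}}}(B_\alpha,B_\alpha^\P)\lesssim\mathscr{C}_2^{\mathfrak{A}}+\|T^{\mathfrak{A}}\|_{L^{(\Psi,\psi)}\to wL^{(\Psi,\psi)}}\).
\end{itemize}

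Taking \(\sup_\alpha\), the single ball \(B_{\alpha_0}\) satisfies (\(\Psi\)-BOO-I) for \(T^{\mathfrak{A}}\). Its constant is \(\lesssim\mathscr{C}_1^{\mathfrak{A}}+\mathscr{C}_2^{\mathfrak{A}}+\|T^{\mathfrak{A}}\|_{L^{(\Psi,\psi)}\to wL^{(\Psi,\psi)}}\), not \(\le\mathscr{C}_1^{\mathfrak{A}}\) as the preceding theorem claims. This is enough, because the proof of Theorem \ref{main} uses (\(\Psi\)-BOO-I) only through Lemma \ref{Lm 3.5}(2), where a bound by \(\mathscr{C}(T)\) suffices. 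With this step in place, your deductions of (a) and (b) go through as written.
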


\section{\bf Sparse domination of \(\Psi\)-BOOs}\label{Section 3}

\subsection{Geometric properties of ball-basis}\label{Section 3.1}

We use the notation \(a \lesssim b\) to mean \(a \leq c\cdot b\) for some constant \(c>0\) independent of all relevant parameters, and we write \(a \sim b\) when both \(a \lesssim b\) and \(b \lesssim a\) hold.

Throughout this work, \((X,\,\mathfrak{M},\,\mu)\) be a measure space endowed with a ball-basis \(\mathfrak{B}\). We shall frequently use several simple consequences of the axioms defining \(\mathfrak{B}\):

\begin{itemize*}
	\item[({\it a})] (\textbf{Ball-hull control property}) If \(A, B \in \mathfrak{B}\) satisfy \(\mu(A) \leq 2\mu(B)\) and \(A \cap B \neq \varnothing\), then \(A \subset B^\P\).
	
	\item[({\it b})] (\textbf{Ball-hull relation}) We refer to \(B^\P\) in condition B4 as the hull of the ball \(B\). For a ball \(B \in \mathfrak{B}\), set \(B^{[0]} := B\), \(B^{[1]} := B^\P\), and inductively \(B^{[n+1]} := (B^{[n]})^\P\) for \(n \geq 1\). From condition B4 it follows that \(\mu(B^{[n+1]}) \leq \mathscr{C}_0\,\mu(B^{[n]})\) and consequently \(\mu(B^{[n]}) \leq \mathscr{C}_0^n\,\mu(B)\) for all \(n \geq 0\).
	
	\item[({\it c})] (\textbf{Bounded sets}) A set \(E \subset X\) is termed {\sffamily bounded} when there exists some ball \(B \in \mathfrak{B}\) containing \(E\).
	
	\item[({\it d})] (\textbf{Almost-sure inclusion}) For measurable sets \(E, F \subset X\), we write \(E \subset F\) a.s. (almost surely) when \(\mu(E \setminus F) = 0\).
	
	\item[({\it e})] (\textbf{Density point}) A point \(x \in E\) is a {\sffamily density point} of a measurable set \(E \in \mathfrak{M}\) if for any \(\varepsilon > 0\) there exists a ball \(B \ni x\) such that \(\mu(B \cap E) > (1 - \varepsilon) \mu(B)\).
	
	\item[({\it f})] (\textbf{Density property}) The {\sffamily density property} for a measure space \((X,\,\mathfrak{M},\,\mu)\) is characterized by the condition that, given any measurable set \(E\), almost every point of \(E\) is a density point.
	
	\item[({\it g})] (\textbf{Doubling condition}) A ball-basis \(\mathfrak{B}\) satisfies the {\sffamily doubling condition} if and only if there exists a constant \(\theta > 1\) such that, given any ball \(A \in \mathfrak{B}\) with \(A^\P \subsetneq X\), we can select a ball \(B \in \mathfrak{B}\) for which both inclusions \(A \subset B\) and the measure inequality \(\mu(B) \leq \theta \mu(A)\) hold.

\end{itemize*}

The geometric properties of the ball-basis discussed below can be found in \cite{Abstract2019,Caomingming2023,Abstract2021,Abstract2023}.

\begin{lemma}\label{Lm 3.1}
	Let \((X,\,\mathfrak{M},\,\mu)\) be a measure space endowed with a ball-basis \(\mathfrak{B}\). Assume that there exist a ball \(B \in \mathfrak{B}\) and a sequence of balls \(\{G_k\}_{k \geq 1} \subset \mathfrak{B}\) such that \(G_k \cap B \neq \varnothing\) for each \(k\) and \(\lim_{k \to \infty} \mu(G_k) = r := \sup_{A \in \mathfrak{B}} \mu(A)\). Then \(X \subset \bigcup_k G_k^\P\). Moreover, for any ball \(A \in \mathfrak{B}\), there exists an integer \(k_0\) such that \(A \subset G_k\) whenever \(k \ge k_0\).
\end{lemma}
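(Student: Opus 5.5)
We prove the two assertions in order, using only axioms B1 and B4, and in particular the ball-hull control property (\textit{a}). Since the balls $G_k$ all meet the fixed ball $B$, the idea is that once $\mu(G_k)$ is close to the supremal measure $r$, every ball meeting $B$ has measure at most about $2\mu(G_k)$. Ball-hull control then places such balls inside $G_k^\P$.

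\textbf{First claim: $X\subset\bigcup_k G_k^\P$.} Fix $x\in X$ and choose a point $y\in B$. By B2 there is a ball $A\in\mathfrak{B}$ containing both $x$ and $y$, so $A\cap B\neq\varnothing$. We then compare $\mu(A)$ with $\mu(G_k)$.
\begin{itemize*}
\item If $r<\infty$: by B1, $\mu(A)\le r$, and since $\mu(G_k)\to r>0$, we have $\mu(A)\le r<2\mu(G_k)$ for all large $k$.
\item If $r=\infty$: $\mu(G_k)\to\infty$, so again $\mu(A)\le 2\mu(G_k)$ for large $k$.
\end{itemize*}
The delicate point is that $A$ meets $B$, not necessarily $G_k$, so property (\textit{a}) cannot be applied to the pair $(A,G_k)$ directly. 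The fix is to route through $B$ and use the monotonicity $E\subset F\Rightarrow E^\P\subset F^\P$. For large $k$ we also have $\mu(B)\le 2\mu(G_k)$, and $B\cap G_k\neq\varnothing$, so (\textit{a}) gives $B\subset G_k^\P$.

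We then aim to show $A\subset G_k^{[2]}$ or $A\subset G_k^{[3]}$ for large $k$. Property (\textit{a}) applied to the pair $(A,G_k^\P)$ gives $A\subset G_k^{[2]}$: indeed $A$ meets $B\subset G_k^\P$, and $\mu(A)\le 2\mu(G_k)\le 2\mu(G_k^\P)$ for large $k$. In the paper's argument one presumably instead picks $A$ containing $x$ and a point of $G_k$ directly (again by B2), which gives $A\cap G_k\neq\varnothing$ and hence $A\subset G_k^\P$ at once. I will use that cleaner choice: pick any $z\in G_{k}$ and a ball $A\ni x,z$. Care is needed because $A$ then depends on $k$.

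To handle this dependence, take $A\ni x,y$ with $y\in B$ as above and pass through hulls. By (\textit{a}), $A\subset B^\P$ whenever $\mu(A)\le 2\mu(B)$. Otherwise $B\subset A^\P$, and then $\mu(A^\P)\le \mathscr C_0\mu(A)$.

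\textbf{Second claim: $A\subset G_k$ for $k\ge k_0$.} Fix $A\in\mathfrak{B}$. The obstacle is that hull inclusion only gives $A\subset G_k^\P$, not $A\subset G_k$. I would try to exploit maximality. Apply the first step to the balls $G_k^\P$: for large $k$ the measure $\mu(G_k)$ exceeds $r/\mathscr C_0$ (or tends to infinity), so any ball meeting $G_k$ with measure below $2\mu(G_k)$ is already absorbed. I would attempt to show directly that $A\cup B$ lies in a ball of measure at most $2\mu(G_k)$ that meets $G_k$, and then invoke the hull of a suitable subball.

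If this direct route fails, I would follow the cited sources \cite{Abstract2019,Caomingming2023}. There the statement is proved by applying the first claim to a subsequence with $\mu(G_{k+1})>\mathscr C_0\mu(G_k)$ or a near-maximal measure, using that $G_k^\P\in\mathfrak{B}$ has measure at most $r$. This forces $\mu(G_k^\P)\approx\mu(G_k)$, and by B4, with $A\subset G_{k'}^\P$, containment in $G_k$ itself follows for $k$ beyond some $k_0$. I expect this last upgrade from $G_k^\P$ to $G_k$ to be the main difficulty.
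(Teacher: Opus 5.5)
There is a genuine gap. Part of it is that the second assertion, read literally as $A\subset G_k$, is false. So the upgrade from $G_k^\P$ to $G_k$ that you flag as the main difficulty cannot be carried out. (The paper gives no proof of this lemma; it is quoted from the cited sources.) In the Euclidean ball-basis on $\mathbb{R}$ take $B=[0,1]$ and $G_k=[0,k]$. The hypotheses hold with $r=\infty$, yet $A=[-1,0]\not\subset G_k$ for every $k$, and even $\bigcup_k G_k=[0,\infty)\neq X$. Finite $r$ does not help either. On $X=[0,1]$ with its subintervals (hull $[a-2h,b+2h]\cap[0,1]$, $h=b-a$), take $B=[0,\tfrac12]$, $G_k=[0,1-2^{-k}]$ and $A=[\tfrac12,1]$. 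Hence no subsequence or near-maximality argument can force $G_{k'}^\P\subset G_k$ for a general sequence. What is true is $A\subset G_k^\P$ for $k\ge k_0$. This is also all that Lemma~\ref{Lm 3.5}(3) uses, since there $B_k^{[2]}\subset B_{k+1}$ gives $\bigcup_k B_k^\P=\bigcup_k B_k$.

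Your first claim is also never closed. Routing through $B\subset G_k^\P$ only gives $A\subset G_k^{[2]}$. Choosing $A$ through $x$ and a point of $G_k$ works only when $r<\infty$, where $\mu(A)\le r<2\mu(G_k)$ holds uniformly in $A$; for $r=\infty$ nothing controls the size of a $k$-dependent ball. Your final dichotomy then stops short of a conclusion. The missing observation is that this dichotomy already yields a single ball containing both $A$ and $B$: take $D:=B^\P$ if $\mu(A)\le 2\mu(B)$, and $D:=A^\P$ otherwise (recall $A\subset A^\P$ by B4). Since $D\supset B$, it meets every $G_k$. Its measure is fixed and finite, and $\mu(D)\le 2\mu(G_k)$ for all large $k$: if $r<\infty$ this is because $2\mu(G_k)\to 2r>r\ge\mu(D)$, and if $r=\infty$ because $\mu(G_k)\to\infty$. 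Ball-hull control then gives $A\subset D\subset G_k^\P$. For an arbitrary ball $A$, first pick a B2 ball $E$ meeting both $A$ and $B$. Apply the step to $(A,E)$ to get a ball $D_1\supset A\cup E$, then to $(D_1,B)$ to get a ball $D_2\supset A\cup B$. This gives $A\subset D_2\subset G_k^\P$ for $k\ge k_0$, and $X\subset\bigcup_k G_k^\P$ follows because every point lies in some ball.
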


\begin{lemma}\label{Lm more}
	Let \((X,\,\mathfrak{M},\,\mu)\) be a measure space endowed with a ball-basis \(\mathfrak{B}\) that satisfies ball-hull control property. The following statements then hold:
	\begin{itemize*}
		\item[(1)] If a set \(E \subset X\) is bounded and covered by a family of balls \(\mathcal{G} \subset \mathfrak{B}\) (that is, \(E \subset \bigcup_{G \in \mathcal{G}} G\)), then one can extract a finite or infinite pairwise disjoint subfamily \(\{G_k\}_{k \in I} \subset \mathcal{G}\), where the index set \(I\) is either finite or \(I = \mathbb{N}_+\), such that \(E \subset \bigcup_{k \in I} G_k^\P\).
		
		\item[(2)] Under the additional assumption that \(\mathfrak{B}\) satisfies the density property, the following holds: for every bounded measurable set \(E\) with \(\mu(E)>0\) and every \(\varepsilon > 0\), there exists a sequence \(\{B_k\} \subset \mathfrak{B}\) satisfying
		\begin{align*}
			\mu \left( \bigcup_k B_k \setminus E \right) < \varepsilon \quad \text{and} \quad \mu \left( E \setminus \bigcup_k B_k \right) < \alpha \mu(E),
		\end{align*}
		with some admissible constant \(\alpha \in (0,1)\).
		
		\item[(3)] If, furthermore, \(\mathfrak{B}\) satisfies the density property, then for every bounded measurable set \(E \subset X\) one can find a sequence of balls \(\{B_k\}_{k \in \mathbb{N}_+} \subset \mathfrak{B}\) such that
		\begin{align*}
			E \subset \bigcup_k B_k \; a.s. \quad \text{and} \quad \sum_k \mu(B_k) \leq 2\mathscr{C}_0\,\mu(E).
		\end{align*}
		
		\item[(4)] Let \(A \in \mathfrak{B}\) and let \(\mathcal{G} \subset \mathfrak{B}\) be a pairwise disjoint family of balls such that for every \(G \in \mathcal{G}\), and there exist constants \(c_1,\, c_2 > 0\) satisfying
		\begin{align*}
			G^\P \cap A \neq \varnothing \quad \text{and} \quad 0 < c_1 \leq \mu(G) \leq c_2 <\infty.
		\end{align*}
		Then \(\mathcal{G}\) is finite, and its cardinality \(\#\mathcal{G}\) satisfies
		\begin{align*}
			\#\mathcal{G} \lesssim c_1^{-1} \max\{c_2,\; \mu(A)\}.
		\end{align*}
		(Here \(\#\mathcal{G}\) denotes the number of elements in the family \(\mathcal{G}\).)
		
		\item[(5)] The condition B3 holds if and only if the density property is satisfied.
		
	\end{itemize*}
\end{lemma}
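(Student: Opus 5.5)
I would prove the five items in the order (4), (1), (2), (3), (5). Each later item reuses the earlier ones together with the ball-hull control property (a) and the hull iterates (b).

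\textbf{Item (4).} The plan is to trap every $G\in\mathcal{G}$ inside one fixed ball whose measure is $\lesssim\max\{c_2,\mu(A)\}$, with a constant depending only on $\mathscr{C}_0$. Disjointness then gives $c_1\,\#\mathcal{G}\le\sum_{G}\mu(G)\le\mu(\text{that ball})$. For each $G$ we have $G^\P\cap A\neq\varnothing$ and $\mu(G^\P)\le\mathscr{C}_0c_2$, and we compare measures via (a):
\begin{itemize}
\item If $\mu(G^\P)\le 2\mu(A)$, then $G\subset G^\P\subset A^\P$.
\item Otherwise $A\subset G^{[2]}$. Fix one such $G_0$. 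Any other $G$ of this type has $G^\P$ meeting $G_0^{[2]}$, and $\mu(G^\P)\le\mathscr{C}_0c_2\le 2\mu(G_0^{[2]})$ up to a further hull. Hence $G\subset G_0^{[3]}$ or $G\subset G_0^{[4]}$.
\end{itemize}
The container is $A^\P$ or a fixed hull iterate $G_0^{[k]}$, and its measure is at most $\mathscr{C}_0^{k}\max\{c_2,\mu(A)\}$ by (b). This gives the bound.

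\textbf{Item (1).} I will run a Vitali-type greedy selection. Let $E\subset B_*$ with $B_*\in\mathfrak{B}$, and discard balls not meeting $E$.
\begin{itemize}
\item If $\sup_{G\in\mathcal{G}}\mu(G)=\sup_{\mathfrak{B}}\mu$ is attained along a sequence, Lemma \ref{Lm 3.1} gives $X\subset\bigcup_kG_k^\P$. In that case extracting a disjoint subfamily is simple (a single ball suffices eventually).
\item Otherwise, inductively choose $G_k$ disjoint from $G_1,\dots,G_{k-1}$ with $\mu(G_k)>\frac12\sup\{\mu(G):G\cap G_j=\varnothing,\ j<k\}$.
\end{itemize}
Every unselected $G$ meets some $G_k$ with $\mu(G)\le 2\mu(G_k)$, so $G\subset G_k^\P$ by (a), and hence $E\subset\bigcup_k G_k^\P$. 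The real point is to show that $\mu(G_k)\to0$ when infinitely many balls are chosen, so that no $G$ escapes. For this, apply (4) with $A=B_*$ to each dyadic layer $2^{-j-1}<\mu(G_k)\le 2^{-j}$: each layer is finite, and since the measures are non-increasing up to a factor $2$, the sequence must tend to zero.

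\textbf{Items (2) and (3).} For (2), use B3 together with the density property. Take an open-like approximation: by the density property, almost every $x\in E$ lies in a ball $B\ni x$ with $\mu(B\cap E)>(1-\varepsilon')\mu(B)$. Restrict to balls inside a fixed hull of $E$'s container and apply (1) to obtain disjoint $\{G_k\}$ with $E\subset\bigcup G_k^\P$ a.s. Then
\[
\sum_k\mu(G_k)\ge\mathscr{C}_0^{-1}\mu(E).
\]
Each $G_k$ is mostly inside $E$, so $\mu(\bigcup G_k\setminus E)\le\varepsilon'\sum\mu(G_k)<\varepsilon$. We also get $\mu(E\setminus\bigcup G_k)\le(1-\tfrac{1}{2\mathscr{C}_0})\mu(E)$, which gives $\alpha=1-\frac{1}{2\mathscr{C}_0}$. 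Iterating (2) on the leftover $E\setminus\bigcup B_k$ with $\varepsilon_j=2^{-j}\varepsilon$ yields (3): the leftovers decay geometrically, and the total measure is bounded by a geometric sum giving $\le 2\mathscr{C}_0\mu(E)$ after tuning the constants.

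\textbf{Item (5) and the main obstacle.}
\begin{itemize}
\item Density $\Rightarrow$ B3: split a general $E$ into bounded pieces (using B2 and Lemma \ref{Lm 3.1} to exhaust $X$) and apply (3) or the iterated (2).
\item B3 $\Rightarrow$ density: suppose the set of non-density points of $E$ has positive measure, say $F$ with $\mu(B\cap E)\le(1-\varepsilon)\mu(B)$ for every ball meeting $F$. Approximate $F$ by $\bigcup B_k$ via B3 and pass to disjoint balls with (1). This produces a ball in which $E\supset F$ has density close to $1$, contradicting the choice of $F$.
\end{itemize}
I expect the main obstacle to be the termination and measure-decay step in (1), which relies on (4), and the bookkeeping of $\mathscr{C}_0$-powers in (4) itself.
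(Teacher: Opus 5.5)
\textbf{Gap in item (4).} The paper gives no proof of this lemma; it is quoted from the cited works of Karagulyan and Cao et al. Your outline therefore has to stand on its own. Most of it does, but the decisive step of (4) fails as written.

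For the balls with $\mu(G^{\P})>2\mu(A)$, you fix an arbitrary such $G_0$ and assert $\mu(G^{\P})\le\mathscr{C}_0c_2\le 2\mu(G_0^{[2]})$ ``up to a further hull'', hence $G\subset G_0^{[3]}$ or $G\subset G_0^{[4]}$. Nothing supports this. About $G_0$ you only know $\mu(G_0)\ge c_1$, while $c_2$ may be much larger. Moreover, iterated hulls need not grow, since B4 permits $B^{\P}=B$.

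Here is a concrete counterexample to that step. Let $X=\{a,b\}\cup Y$ with $\#Y=N\ge4$, $\mu(\{a\})=0.4$, and all other points of mass $1$. Take as balls all singletons, $C=\{a,b\}$, $Y$ and $X$, with hulls $\{a\}^{\P}=\{a\}$, $\{b\}^{\P}=C^{\P}=C$, $\{y\}^{\P}=\{y\}$ and $Y^{\P}=X^{\P}=X$. This satisfies B1--B4 with $\mathscr{C}_0=1.4$. For $A=\{a\}$ and $\mathcal{G}=\{\{b\},Y\}$, both balls are of your second type. Choosing $G_0=\{b\}$ gives $G_0^{[k]}=C$ for every $k\ge1$, while $Y\not\subset C$.

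\textbf{Repair, and the remaining items.} Choose $G_0$ extremal: among the second-type balls, take $G_0$ with $\mu(G_0^{\P})>\frac12\sup_G\mu(G^{\P})$; this supremum is at most $\mathscr{C}_0c_2$. For every second-type $G$ you then have $G^{\P}\cap G_0^{[2]}\supset G^{\P}\cap A\neq\varnothing$ and $\mu(G^{\P})<2\mu(G_0^{\P})\le 2\mu(G_0^{[2]})$. Ball-hull control gives $G\subset G^{\P}\subset G_0^{[3]}$, and $\mu(G_0^{[3]})\le\mathscr{C}_0^3c_2$. Combined with the first-type balls lying in $A^{\P}$ and disjointness, this yields $\#\mathcal{G}\le c_1^{-1}\bigl(\mathscr{C}_0\mu(A)+\mathscr{C}_0^3c_2\bigr)$.

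Note also that (1) need not pass through (4) or Lemma \ref{Lm 3.1} at all. All $G\in\mathcal{G}$ meet $B_*$. If some $G$ has $\mu(G)\ge\mu(B_*)/2$, then $B_*\subset G^{\P}$ and one ball suffices. Otherwise every $G$ lies in $B_*^{\P}$, so your greedy balls are disjoint subsets of $B_*^{\P}$, and $\sum_k\mu(G_k)<\infty$ forces $\mu(G_k)\to0$. This keeps the defect in (4) from propagating.

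Items (2) and (3) then work as you sketch. In (5), apply B3 to $E$ rather than to the set $F$ of non-density points at a fixed level $\delta$. That set need not be measurable, because arbitrary unions of balls need not be. Every ball meeting $F$ has $\mu(B\setminus E)\ge\delta\mu(B)$. So disjointifying, via (1), the approximating balls that meet $F$ yields $\mu^*(F)\le\mu(E\setminus\bigcup_kB_k)+\mathscr{C}_0\delta^{-1}\mu(\bigcup_kB_k\setminus E)$, which B3 makes arbitrarily small.
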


\subsection{Properties of \(\Psi\)-BOOs}\label{Section 3.2}

For a \(\mathbb{B}\)-valued operator \(T\) satisfying \(Tf(x) = \|\mathscr{T}f(x)\|_{\mathbb{B}}\), we consider condition (\(\Psi\)-BOO-I). For any \(A, B \in \mathfrak{B}\) with \(A \subset B\) we introduce  
\begin{align*}
	\Delta_T(A,B) := \sup_{f \in L^{(\Psi,\psi)}(X)} \sup_{x\in A} \frac{ \left\| \mathscr{T}(f\,\mathbb{I}_{B^\P})(x) - \mathscr{T}(f \, \mathbb{I}_{A^\P})(x) \right\|_{\mathbb{B}}} {\|f\|_{\Phi,\phi,B^\P}}.
\end{align*}

\begin{lemma}\label{Lm 3.3}
	Let \((X,\,\mathfrak{M},\,\mu)\) be a measure space endowed with a ball-basis \(\mathfrak{B}\). Given a Young function \(\Phi \in \mathscr{Y}\) and a function \(\phi \in \mathscr{G}\), the following hold:
	
	(1) If \(A, B, C \in \mathfrak{B}\) satisfy \(A \subset B \subset C\), then \(\Delta_T(A,B) \lesssim \Delta_T(A, C)\).
	
	(2) For any \(A, B \in \mathfrak{B}\) with \(A \subset B\), we have
	\begin{align*}
		\langle \|f \, \mathbb{I}_{B^\P}\|\rangle_{\Phi,\phi,A} \lesssim \frac{\mu(B) \phi(\mu(B))}{\mu(A)\phi(\mu(A))} \|f\|_{\Phi,\phi, B^\P}.
	\end{align*}
\end{lemma}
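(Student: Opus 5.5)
Part (1) is a statement about the ratio defining \(\Delta_T\), and the plan is to run a triangle-inequality argument through the intermediate hull. Fix \(A\subset B\subset C\), \(f\in L^{(\Psi,\psi)}(X)\) and \(x\in A\). By B4, \(B\subset C\) gives \(B^\P\subset C^\P\). Put \(g:=f\,\mathbb{I}_{B^\P}\). Then \(g\,\mathbb{I}_{C^\P}=g\,\mathbb{I}_{B^\P}=f\,\mathbb{I}_{B^\P}\) and \(g\,\mathbb{I}_{A^\P}=f\,\mathbb{I}_{A^\P}\), since \(A^\P\subset B^\P\). Hence
\begin{align*}
\left\|\mathscr{T}(f\,\mathbb{I}_{B^\P})(x)-\mathscr{T}(f\,\mathbb{I}_{A^\P})(x)\right\|_{\mathbb{B}}
=\left\|\mathscr{T}(g\,\mathbb{I}_{C^\P})(x)-\mathscr{T}(g\,\mathbb{I}_{A^\P})(x)\right\|_{\mathbb{B}}
\le \Delta_T(A,C)\,\|g\|_{\Phi,\phi,C^\P}.
\end{align*}
This uses only the definition of \(\Delta_T(A,C)\) applied to \(g\). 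Note that \(g\in L^{(\Psi,\psi)}(X)\) because truncation does not increase the Orlicz--Morrey norm, by the monotonicity (\ref{ine main3}).

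It then remains to compare \(\|g\|_{\Phi,\phi,C^\P}=\|f\,\mathbb{I}_{B^\P}\|_{\Phi,\phi,C^\P}\) with \(\|f\|_{\Phi,\phi,B^\P}\). This is the main obstacle. A crude comparison produces the factor \(\mu(C^\P)\phi(\mu(C^\P))/(\mu(B^\P)\phi(\mu(B^\P)))\), which is not uniformly bounded. I would first check the direction of monotonicity of the normalized Luxemburg norm with respect to enlarging the set while keeping the support inside \(B^\P\). Since \(\Phi\) is convex and \(\Phi(0)=0\), the Luxemburg norm over a larger set of an \(f\) supported in the smaller set satisfies
\[
\|f\,\mathbb{I}_{B^\P}\|_{\Phi,\phi,C^\P}\le \max\Bigl\{1,\tfrac{\mu(B^\P)\phi(\mu(B^\P))}{\mu(C^\P)\phi(\mu(C^\P))}\Bigr\}\,\|f\|_{\Phi,\phi,B^\P}.
\]
For \(\phi\in\mathscr{G}\), \(t\mapsto t\phi(t)\) is almost increasing; this is the content of Propositions \ref{propA}--\ref{propC}, with constants \(C_{dc},C_{ic}\). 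So the maximum is bounded by a constant depending only on those almost-monotonicity constants. I would invoke Proposition \ref{propB} (as it was used in Section \ref{Section 2}) to get \(\|f\,\mathbb{I}_{B^\P}\|_{\Phi,\phi,C^\P}\lesssim\|f\|_{\Phi,\phi,B^\P}\). Dividing by \(\|f\|_{\Phi,\phi,B^\P}\) and taking suprema over \(x\) and \(f\) gives \(\Delta_T(A,B)\lesssim\Delta_T(A,C)\).

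For (2), fix \(B'\in\mathfrak{B}\) with \(B'\supset A\) and estimate \(\|f\,\mathbb{I}_{B^\P}\|_{\Phi,\phi,B'}\) by the definition of the Luxemburg norm. Take \(\lambda=\kappa\|f\|_{\Phi,\phi,B^\P}\), where \(\kappa=\mu(B)\phi(\mu(B))/(\mu(A)\phi(\mu(A)))\) up to constants. Convexity gives \(\Phi(s/\kappa)\le\Phi(s)/\kappa\) once \(\kappa\ge1\), and \(\kappa\gtrsim1\) by almost-monotonicity of \(t\phi(t)\) since \(\mu(A)\le\mu(B)\). Then
\begin{align*}
\frac{1}{\mu(B')\phi(\mu(B'))}\int_{B'\cap B^\P}\Phi\Bigl(\frac{|f|}{\lambda}\Bigr)d\mu
\le \frac{1}{\kappa}\cdot\frac{\mu(B^\P)\phi(\mu(B^\P))}{\mu(B')\phi(\mu(B'))}
\lesssim \frac{\mu(B^\P)\phi(\mu(B^\P))}{\kappa\,\mu(A)\phi(\mu(A))}\lesssim 1.
\end{align*}
Here \(\mu(B')\phi(\mu(B'))\gtrsim\mu(A)\phi(\mu(A))\) because \(B'\supset A\). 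In addition, \(\mu(B^\P)\le\mathscr{C}_0\mu(B)\) and the almost-monotonicity/doubling of \(\phi\) give \(\mu(B^\P)\phi(\mu(B^\P))\lesssim\mu(B)\phi(\mu(B))\). Taking the supremum over \(B'\supset A\) yields the claim. The delicate point is again tracking which of the \(\mathscr{G}\)-properties (\(\phi\) almost decreasing, \(t\phi(t)\) almost increasing) is needed at each comparison.
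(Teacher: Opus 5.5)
Your proof is correct. Part (1) is essentially the paper's argument. The displayed inequality you derive there, with the \(\max\{1,\cdot\}\), is exactly Proposition \ref{propC}, and that is what the paper cites. Proposition \ref{propB} runs in the opposite direction: it bounds the norm over a smaller ball by the norm over a larger one, at the cost of the ratio of \(\mu(\cdot)\phi(\mu(\cdot))\). Invoking it would return the crude factor you rightly rejected, so the label should be changed.

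For part (2) you take a different route from the paper. The paper fixes an enclosing ball \(A_1\supset A\) and splits according to whether \(\mu(A_1)\le\mu(B^\P)\).
\begin{itemize}
\item In the first case, the ball-hull control property gives \(A_1\subset B^{[2]}\). The paper then applies Proposition \ref{propB} to \(A_1\subset B^{[2]}\) and Proposition \ref{propC} to \(B^\P\subset B^{[2]}\).
\item In the second case it gets \(B^\P\subset A_1^\P\), runs the same two propositions through \(A_1^\P\), and obtains the sharper bound \(\lesssim\|f\|_{\Phi,\phi,B^\P}\).
\end{itemize}

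You work with the modular directly. The integral is at most the one over \(B^\P\), hence at most \(\mu(B^\P)\phi(\mu(B^\P))/\kappa\). The normalizing factor satisfies \(\mu(B')\phi(\mu(B'))\gtrsim\mu(A)\phi(\mu(A))\) simply because \(B'\supset A\).

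This buys simplicity. It needs no geometry beyond \(\mu(B^\P)\le\mathscr{C}_0\mu(B)\): no ball-hull control and no case split. In fact it proves the set-free estimate \(\|f\,\mathbb{I}_F\|_{\Phi,\phi,E}\le\max\{1,\,\mu(F)\phi(\mu(F))/(\mu(E)\phi(\mu(E)))\}\,\|f\|_{\Phi,\phi,F}\), of which Propositions \ref{propA} and \ref{propC} are special cases. If you keep \(\mu(B')\phi(\mu(B'))\) instead of replacing it by \(\mu(A)\phi(\mu(A))\), you also recover the paper's sharper bound for large \(B'\). The paper's route buys only modularity, by reusing the stated propositions.

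The one point to state explicitly is that the constant in \(\kappa\) must be fixed so that \(\kappa\ge1\) and the modular is \(\le1\), not merely \(\lesssim1\). Your phrase ``up to constants'' allows this.
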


\begin{proof}
	Now we prove part (1). Let \(g = f \, \mathbb{I}_{B^\P}\). For any \(x \in A\), we have  
	\begin{align*}
		\|\mathscr{T}(f\,\mathbb{I}_{B^\P})&(x) - \mathscr{T}(f \, \mathbb{I}_{A^\P})(x)\|_{\mathbb{B}} = \|\mathscr{T} (g\,\mathbb{I}_{C^\P})(x) - \mathscr{T}(g \, \mathbb{I}_{A^\P})(x) \|_{\mathbb{B}} \\
		&\leq \Delta_T(A,C) \|g\|_{\Phi,\phi,C^\P} = \Delta_T(A,C) \|f \, \mathbb{I}_{B^\P}\|_{\Phi,\phi,C^\P} \\
		& \leq \Delta_T(A,C) \max\{1,C_{ic}\} \|f\|_{\Phi,\phi,B^\P} \lesssim \Delta_T(A,C) \|f\|_{\Phi,\phi,B^\P}.
	\end{align*}
	The penultimate inequality follows from Proposition \ref{propC}. This yields part (1).
	
	Next we present the proof of part (2). Take a ball \(A_1 \in \mathfrak{B}\) such that \(A \subset A_1 \cap B\).
	\begin{itemize}[leftmargin=1em]
		\item {\bf {\emph Case} 1} \(\mu(A_1) \leq \mu(B^\P)\).  
		Since \(A_1 \cap B^\P \ne \varnothing\), ball-hull control property implies \(A_1 \subset B^{[2]}\). Consequently,
		\begin{align}\label{3.1}
			\|f \, \mathbb{I}_{B^\P}&\|_{\Phi,\phi,A_1} \leq \frac{\max\{1, C_{ic}\} \mu(B^{[2]})\phi(\mu(B^{[2]}))}{\mu(A_1)\phi( \mu(A_1))} \|f \, \mathbb{I}_{B^\P}\|_{\Phi,\phi,B^{[2]}} \nonumber \\
			&\lesssim \frac{\mu(B^{[2]})\phi(\mu(B^{[2]}))}{\mu(A_1)\phi( \mu(A_1))} \|f\|_{\Phi,\phi,B^\P} \lesssim \frac{\mu(B) \phi(\mu(B))}{\mu(A)\phi(\mu(A))} \|f\|_{\Phi,\phi,B^\P}.
		\end{align}
		
		\item {\bf {\emph Case} 2} \(\mu(A_1) > \mu(B^\P)\).  
		Ball-hull control property together with \(A_1 \cap B^\P \ne \varnothing\) gives \(B^\P \subset A_1^\P\). Hence,
		\begin{align}\label{3.2}
			\|f \, \mathbb{I}_{B^\P}&\|_{\Phi,\phi,A_1} \leq \frac{\max\{1, C_{ic}\} \mu(A_1^\P)\phi(\mu(A_1^\P))}{\mu(A_1)\phi( \mu(A_1))} \|f \, \mathbb{I}_{B^\P}\|_{\Phi,\phi,A_1^\P} \nonumber \\
			&\lesssim \frac{\mu(A_1^\P)\phi(\mu(A_1^\P))}{\mu(A_1)\phi( \mu(A_1))} \|f\|_{\Phi,\phi,B^\P} \lesssim \|f\|_{\Phi, \phi, B^\P}.
		\end{align}
	\end{itemize}
	The inequalities in (\ref{3.1}) and (\ref{3.2}) follow from Proposition \ref{propB}, Proposition \ref{propC}, and the fact that \(\phi \in \mathscr{G}\). Therefore, (\ref{3.1}) and (\ref{3.2}) together imply part (2).
\end{proof}

\begin{lemma}\label{Lm 3.4}
	Let \((X,\,\mathfrak{M},\,\mu)\) be a measure space endowed with a ball-basis \(\mathfrak{B}\), and let \((\Psi,\psi) \in E_{mb}\). Assume that \(T\) is a \(\mathbb{B}\)-valued linear operator satisfying the condition (\(\Psi\)-BOO-II) and that \(T \in \mathbb{W}_{\Psi, \psi}\). Then for every \(f \in L^{(\Psi, \psi)}(X)\) the following estimates hold:
	
	\begin{itemize*}
		\item[(a)] For any balls \(A,B \in \mathfrak{B}\) with \(A \subset B\),
		\begin{align*}
			\Delta_T(A,B)\lesssim\left(\mathscr{C}_2(T)+\|T\|_{L^{(\Psi, \psi)}\to wL^{(\Psi,\psi)}}\right)\frac{\mu(B)\phi(\mu(B)) }{\mu(A)\phi(\mu(A))}.
		\end{align*}
		
		\item[(b)] For any balls \(A, B, C \in \mathfrak{B}\) with \(A \subset B \subset C\),
		\begin{align*}
			\Delta_T(A,C)\lesssim\left(\mathscr{C}_2(T)+\|T\|_{L^{(\Psi, \psi)}\to wL^{(\Psi,\psi)}}+\Delta_T(A,B)\right) \frac{\mu(C)\phi(\mu(C)) }{\mu(B)\phi(\mu(B))}.
		\end{align*}
		
		\item[(c)] For any balls \(A,B \in \mathfrak{B}\) with \(A \subset B\),
		\begin{align*}
			\Delta_T(A,B^{[k]}) \lesssim \left(\mathscr{C}_2(T) + \|T\|_{ L^{(\Psi, \psi)}\to wL^{(\Psi,\psi)}}+\Delta_T(A,B)\right) \frac{\mathscr{C}_0^k \phi \left(\mathscr{C}_0^k\mu(B)\right)}{\phi(\mu(B))}, \quad k\geq 1.
		\end{align*}
		
		\item[(d)] If \(\mathfrak{B}\) also satisfies the doubling condition, then \(T\) satisfies condition (\(\Psi\)-BOO-I). Consequently, \(T\) is a \(\mathbb{B}\)-valued \(\Psi\)-BOO.
		
	\end{itemize*}
\end{lemma}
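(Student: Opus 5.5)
The plan is to adapt the argument behind the corresponding lemma of Karagulyan and Cao to the Luxemburg-norm setting. The weak-type ingredient is supplied by the $\mathbb{W}_{\Psi,\psi}$ condition and the oscillation ingredient by ($\Psi$-BOO-II). Throughout, Proposition \ref{propB}, Proposition \ref{propC} and Lemma \ref{Lm 3.3}(2) will be used to compare $\|f\|_{\Phi,\phi,\cdot}$ over nested balls.

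\emph{Part (a).} Fix $A\subset B$, $f$, and $x\in A$, and set $g=f\,\mathbb{I}_{B^\P}$. By linearity,
\[
\mathscr{T}(f\mathbb{I}_{B^\P})(x)-\mathscr{T}(f\mathbb{I}_{A^\P})(x)=\bigl(\mathscr{T}g-\mathscr{T}(g\mathbb{I}_{A^\P})\bigr)(x).
\]
Applying ($\Psi$-BOO-II) on the ball $A$ to $g$, this quantity differs from the same expression at any other point $x'\in A$ by at most $\mathscr{C}_2(T)\langle\|g\|\rangle_{\Phi,\phi,A}$. By Lemma \ref{Lm 3.3}(2), this is at most a constant times $\frac{\mu(B)\phi(\mu(B))}{\mu(A)\phi(\mu(A))}\|f\|_{\Phi,\phi,B^\P}$.

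It therefore suffices to find one $x'\in A$ at which both $\|\mathscr{T}g(x')\|_{\mathbb{B}}$ and $\|\mathscr{T}(g\mathbb{I}_{A^\P})(x')\|_{\mathbb{B}}$ are controlled. Apply (\ref{D1.6}) with a fixed small $\lambda$ (say $1/4$) twice: once to $g=f\mathbb{I}_{B^\P}$ with the pair $A\subset B^\P$, and once to $f\mathbb{I}_{A^\P}$ with the pair $A\subset A^\P$. Each exceptional set has measure at most $\mu(A)/4$, so a good point $x'$ exists. At that point both norms are bounded by
\[
\Psi^{-1}\bigl(4\psi(\mu(A))\bigr)\,\|T\|_{L^{(\Psi,\psi)}\to wL^{(\Psi,\psi)}}\,\|g\|_{\Phi,\phi,A}.
\]
The factor $\Psi^{-1}(4\psi(\mu(A)))$ is a bounded constant by the $E_{mb}$ hypothesis on $(\Psi,\psi)$, which is exactly (\ref{2.8}). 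Then $\|g\|_{\Phi,\phi,A}\le\langle\|g\|\rangle_{\Phi,\phi,A}$ is estimated again by Lemma \ref{Lm 3.3}(2). Taking the supremum over $x$ and $f$ gives (a).

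\emph{Part (b).} For $A\subset B\subset C$, split
\[
\mathscr{T}(f\mathbb{I}_{C^\P})-\mathscr{T}(f\mathbb{I}_{A^\P})=\bigl[\mathscr{T}(f\mathbb{I}_{C^\P})-\mathscr{T}(f\mathbb{I}_{B^\P})\bigr]+\bigl[\mathscr{T}(f\mathbb{I}_{B^\P})-\mathscr{T}(f\mathbb{I}_{A^\P})\bigr].
\]
The second bracket is at most $\Delta_T(A,B)\|f\|_{\Phi,\phi,B^\P}$. Proposition \ref{propC} together with the monotonicity of $t\phi(t)$ bounds $\|f\|_{\Phi,\phi,B^\P}$ by a constant times $\frac{\mu(C)\phi(\mu(C))}{\mu(B)\phi(\mu(B))}\|f\|_{\Phi,\phi,C^\P}$. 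For the first bracket, evaluated at $x\in A\subset B$, repeat the argument of (a) with $B$ in place of $A$ and $C$ in place of $B$. This gives the bound $(\mathscr{C}_2+\|T\|)\frac{\mu(C)\phi(\mu(C))}{\mu(B)\phi(\mu(B))}\|f\|_{\Phi,\phi,C^\P}$, and the sup over $x\in A$ is dominated by the sup over $x\in B$.

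\emph{Part (c).} Apply (b) with $C=B^{[k]}$, and use $\mu(B^{[k]})\le\mathscr{C}_0^k\mu(B)$ together with the monotonicity of $\phi$ (the $\mathscr{G}$ properties) to turn the measure ratio into $\mathscr{C}_0^k\phi(\mathscr{C}_0^k\mu(B))/\phi(\mu(B))$.

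\emph{Part (d).} Given $A$ with $A^\P\subsetneq X$, the doubling condition provides $B\supset A$ with $\mu(B)\le\theta\mu(A)$. The required strict inclusion $B\supsetneq A$ comes from Lemma \ref{Lm 3.1}, by taking a hull if necessary. Part (a) then gives $\Delta_T(A,B)\lesssim\theta\,\phi(\theta\mu(A))/\phi(\mu(A))$, which is a bounded constant by the properties of $\phi$. This is exactly ($\Psi$-BOO-I).

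The main obstacle is the weak-type step in (a). One has to check that (\ref{D1.6}) is applicable with the ambient ball $B^\P$ (respectively $A^\P$) and that the threshold $\Psi^{-1}(\psi(\mu(A))/\lambda)$ is uniformly bounded; I would rely on $E_{mb}$ for the second point. One also has to pass from $\|g\|_{\Phi,\phi,A}$ to $\|f\|_{\Phi,\phi,B^\P}$ without losing more than the stated ratio $\frac{\mu(B)\phi(\mu(B))}{\mu(A)\phi(\mu(A))}$.
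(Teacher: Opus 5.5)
Your proposal is correct and is essentially the paper's own proof. In (a) you use two applications of (\ref{D1.6}) to find a good point $x'\in A$, then ($\Psi$-BOO-II) on $A$ for $f\,\mathbb{I}_{B^\P}$ together with Lemma \ref{Lm 3.3}(2). In (b) you split through $\mathscr{T}(f\,\mathbb{I}_{B^\P})$ and apply (a) to $B\subset C$; (c) is the case $C=B^{[k]}$, and (d) is (a) applied to the doubling ball.

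Two local corrections.

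\emph{Part (b).} The bound $\|f\|_{\Phi,\phi,B^\P}\lesssim\frac{\mu(C)\phi(\mu(C))}{\mu(B)\phi(\mu(B))}\|f\|_{\Phi,\phi,C^\P}$ comes from Proposition \ref{propB} applied to $B^\P\subset C^\P$. You then finish with $\mu(C^\P)\le\mathscr{C}_0\mu(C)$ and the almost monotonicity of $\phi$ and of $t\phi(t)$. Proposition \ref{propC}, which you cite, runs in the opposite direction.

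\emph{Part (d).} Your appeal to Lemma \ref{Lm 3.1} and to the hull does not produce a ball strictly containing $A$ with measure $\lesssim\mu(A)$. Lemma \ref{Lm 3.1} only concerns balls whose measures tend to $\sup_{A'\in\mathfrak{B}}\mu(A')$. The hull gives nothing when $A^\P=A$, and that is exactly the case in which ($\Psi$-BOO-I) is used later, in Lemma \ref{Lm 3.5}(2). The strict inclusion has to be part of the doubling hypothesis itself. As literally written in Section 3.1, that condition is satisfied by $B=A$, and the paper's proof of (d) tacitly reads it as providing $B\supsetneq A$.
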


\begin{proof}
	We begin by proving part (a). Set  
	\begin{align*}
		K_0 = \Psi^{-1} (\lambda\psi(\mu(A))) \|T\|_{L^{(\Psi,\psi)} \to wL^{(\Psi, \psi)}} \|f\|_{\Phi,\phi,A}.
	\end{align*}
	Since \(A \subset B\) and \(T \in \mathbb{W}_{\Psi, \psi}\), applying estimate (\ref{D1.6}) with \(A\) and \(B^\P\) in place of \(A\) and \(B\) yields
	\begin{align*}
		\mu \left( \left\{x \in A : \|\mathscr{T}(f\, \mathbb{I}_{B^\P })(x)\|_\mathbb{B} > K_0 \right\} \right) \leq \frac{1}{\lambda} \mu(A).
	\end{align*}
	Similarly, applying (\ref{D1.6}) with \(A\) and \(A^\P\) gives
	\begin{align*}
		\mu \left( \left\{x \in A : \|\mathscr{T}(f\,\mathbb{I}_{A^\P })(x)\|_\mathbb{B} > K_0 \right\} \right) \leq \frac{1}{\lambda} \mu(A).
	\end{align*}
	Consequently,
	\begin{align*}
		\mu \left( \left\{x \in A : \|\mathscr{T}(f\, \mathbb{I}_{B^\P })(x) - \mathscr{T}(f\,\mathbb{I}_{A^\P})(x)\|_\mathbb{B} > 2 K_0 \right\} \right) \leq \frac{2}{\lambda}\mu(A).
	\end{align*}
	Now choose \(\lambda\) such that \(2/\lambda < 1\). This guarantees the existence of a point \(x' \in A\) for which
	\begin{align}\label{3.3}
		\|\mathscr{T}(f\, \mathbb{I}_{B^\P })(x') - \mathscr{T} & (f\, \mathbb{I}_{A^\P})(x')\|_\mathbb{B} \lesssim \Psi^{-1} (\lambda \psi(\mu(A))) \|T\|_{L^{(\Psi,\psi)} \to wL^{(\Psi, \psi)}} \|f\|_{\Phi,\phi,A} \nonumber \\
		&\lesssim \|T\|_{L^{(\Psi,\psi)} \to wL^{(\Psi, \psi)}} \|f\|_{\Phi,\phi,A}.
	\end{align}
	Next, for any \(x \in A\), replace \(f\) and \(B^\P\) in condition (\(\Psi\)-BOO-II) by \(f\,\mathbb{I}_{B^\P}\) and \(A^\P\). Using Lemma \ref{Lm 3.3} part (2) we obtain  
	\begin{align}\label{3.4}
		\|(\mathscr{T}(f\,&\mathbb{I}_{B^\P}) - \mathscr{T}(f\, \mathbb{I}_{A^\P}))(x) - (\mathscr{T}(f\,\mathbb{I}_{B^\P}) - \mathscr{T}(f\,\mathbb{I}_{A^\P}))(x')\|_{\mathbb{B}} \nonumber \\
		&\leq \mathscr{C}_2(T) \,\langle \|f\,\mathbb{I}_{B^\P}\| \rangle_{\Phi, \phi,A} \lesssim \mathscr{C}_2(T) \frac{\mu(B) \phi(\mu(B))}{\mu(A)\phi(\mu(A))} \|f\|_{\Phi,\phi, B^\P}.
	\end{align}
	Combining (\ref{3.3}) and (\ref{3.4}) we deduce  
	\begin{align*}
		\|\mathscr{T}(f\,\mathbb{I}_{B^\P})(x) - \mathscr{T}(f\, \mathbb{I}_{A^\P})(x) \|_{\mathbb{B}} \lesssim \frac{\mu(B) \phi(\mu(B))}{\mu(A)\phi(\mu(A))} (\mathscr{C}_2(T) + \|T\|_{L^{( \Psi, \psi)} \to wL^{(\Psi, \psi)}}) \|f\|_{\Phi,\phi, B^\P}.
	\end{align*}
	This completes the proof of part (a).
	
	Now we prove part (b). Assume \(A \subset B \subset C\) and take \(x \in A\). By part (a) we have  
	\begin{align}\label{3.5}
		\|\mathscr{T}(f\,\mathbb{I}_{C^\P})(x) - \mathscr{T}(f\, \mathbb{I}_{B^\P})(x) \|_{\mathbb{B}} \lesssim \frac{\mu(C) \phi(\mu(C))}{\mu(B)\phi(\mu(B))} (\mathscr{C}_2(T) + \|T\|_{L^{( \Psi, \psi)} \to wL^{(\Psi, \psi)}}) \|f\|_{\Phi,\phi, C^\P}.
	\end{align}
	From the definition of \(\Delta_T(A,B)\),
	\begin{align}\label{3.6}
		\|\mathscr{T}(f\,\mathbb{I}_{B^\P}) & (x) - \mathscr{T}(f\, \mathbb{I}_{A^\P})(x) \|_{\mathbb{B}} \leq \Delta_T(A,B) \|f\|_{\Phi,\phi, B^\P} \nonumber \\
		&\lesssim \Delta_T(A,B)\frac{\mu(C^\P)\phi(\mu(C^\P))}{\mu(B^\P) \phi(\mu(B^\P))} \|f\|_{\Phi,\phi,C^\P} \nonumber \\
		&\lesssim \Delta_T(A,B)\frac{\mu(C)\phi(\mu(C))}{\mu(B)\phi(\mu (B))}\|f\|_{\Phi,\phi,C^\P},
	\end{align}
	where the last two inequalities follow from Proposition \ref{propB} and the fact that \(\phi \in \mathscr{G}\). Adding (\ref{3.5}) and (\ref{3.6}) we obtain  
	\begin{align*}
		\|\mathscr{T}(f\, & \mathbb{I}_{C^\P})(x) - \mathscr{T}(f\, \mathbb{I}_{A^\P})(x) \|_{\mathbb{B}} \\
		&\lesssim (\mathscr{C}_2(T) + \|T\|_{L^{( \Psi, \psi)} \to wL^{(\Psi,\psi)}}+\Delta_T(A,B))\frac{\mu(C)\phi(\mu(C))}{\mu(B) \phi(\mu (B))} \|f\|_{\Phi,\phi, C^\P}.
	\end{align*}
	This establishes part (b).
	
	Part (c) is verified as follows. For \(k \geq 1\) we clearly have \(A \subset B \subset B^{[k]}\). Applying part (b) together with the property \(\phi \in \mathscr{G}\) gives  
	\begin{align*}
		\Delta_T(A, & B^{[k]}) \lesssim \left(\mathscr{C}_2(T) + \|T\|_{ L^{(\Psi, \psi)}\to wL^{(\Psi,\psi)}}+\Delta_T(A,B)\right) \frac{\mu(B^{[k]})\phi\left(\mu(B^{[k]})\right)}{\mu(B)\phi(\mu(B))} \\
		&\lesssim\left(\mathscr{C}_2(T) + \|T\|_{L^{(\Psi, \psi)}\to wL^{ (\Psi,\psi)}}+\Delta_T(A,B)\right)\frac{\mathscr{C}_0^k\phi\left( \mathscr{C}_0^k\mu(B)\right)}{\phi(\mu(B))}.
	\end{align*}
	
	Finally, we prove part (d). From the definition of the doubling condition, we fix a ball \(A \in \mathfrak{B}\) with \(A^\P \ne X\). Then there exist a constant \(\theta > 1\) and a ball \(B \in \mathfrak{B}\) such that \(A \subset B\) and \(\mu(B) \leq \theta \mu(A)\). Applying part (a) we obtain  
	\begin{align*}
		\Delta_T(A,B) \lesssim \left(\mathscr{C}_2(T)+\|T\|_{L^{(\Psi,\psi)} \to wL^{(\Psi,\psi)}}\right) \theta \frac{\phi(\theta \mu(A))}{\phi (\mu(A))} \lesssim \mathscr{C}_2(T)+\|T\|_{L^{(\Psi,\psi)} \to wL^{(\Psi,\psi)}}.
	\end{align*}
	This shows that \(T\) satisfies condition (\(\Psi\)-BOO-I).
\end{proof}

\begin{lemma}\label{Lm 3.5}
	Let \((X,\,\mathfrak{M},\,\mu)\) be a measure space endowed with a ball-basis \(\mathfrak{B}\). Assume that \(T\) is a \(\mathbb{B}\)-valued \(\Psi\)-BOO and that \(T \in \mathbb{W}_{\Psi, \psi}\). Define
	\begin{align*}
		\mathscr{C}(T) := \mathscr{C}_1(T) + \mathscr{C}_2(T) + \|T\|_{ L^{(\Psi, \psi)}(X) \to wL^{(\Psi,\psi)}(X)}.
	\end{align*}
	Then the following statements hold.
	
	\begin{itemize*}
		\item[(1)] For every ball \(B \in \mathfrak{B}\) there exists a ball \(\widetilde{B} \in \mathfrak{B}\) such that
		\begin{align*}
			B^{[2]} \subset \widetilde{B}, \quad \Delta_T(B^{[2]}, \widetilde{B}) \lesssim \mathscr{C}(T), \quad \text{and either} \; \widetilde{B}^{[1]}=\widetilde{B} \; \text{or} \; \mu(\widetilde{B}) \geq 2 \mu(B).
		\end{align*}
		
		\item[(2)] For every ball \(B \in \mathfrak{B}\) satisfying \(B^\P = B\), there exists \(\widetilde{B} \in \mathfrak{B}\) such that
		\begin{align*}
			B^{[2]} \subset \widetilde{B}, \quad \Delta_T(B^{[2]}, \widetilde{B}) \lesssim \mathscr{C}(T), \quad \text{and} \quad \mu(\widetilde{B}) \geq 2 \mu(B).
		\end{align*}
		
		\item[(3)] For every ball \(B \in \mathfrak{B}\) there exists a sequence \(\{B_k\}_{k \geq 0} \subset \mathfrak{B}\) with \(B_0 = B\) such that for all \(k \geq 0\),
		\begin{align*}
			X = \bigcup_k B_k, \quad B_k^{[2]} \subset B_{k+1}, \quad \text{and} \quad \Delta_T(B_k^{[2]}, B_{k+1}) \lesssim \mathscr{C}(T).
		\end{align*}
		
	\end{itemize*}
\end{lemma}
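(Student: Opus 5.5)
We follow the scheme of \cite{Abstract2019,Caomingming2023}, which carries over because Lemma \ref{Lm 3.4} already supplies the needed $\Delta_T$ estimates in the Orlicz-Morrey setting. Part (1) follows from the $\Psi$-BOO-I condition applied along the chain of hulls, with a dichotomy on whether the hull chain stabilizes. Part (2) follows from (1) via a maximal-measure argument. Part (3) follows by iterating (1)--(2) and using Lemma \ref{Lm 3.1} to obtain exhaustion.

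\emph{Part (1).} Fix $B\in\mathfrak{B}$. If $B^{[2]}$ satisfies $(B^{[2]})^\P = X$, we take $\widetilde B := B^{[3]}$, which equals $X$. Then $\widetilde B^{[1]}=\widetilde B$, and Lemma \ref{Lm 3.4}(a) with $\mu(B^{[3]})\le \mathscr{C}_0\mu(B^{[2]})$ and $\phi\in\mathscr G$ gives $\Delta_T(B^{[2]},\widetilde B)\lesssim \mathscr{C}(T)$.

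Otherwise $(B^{[2]})^\P\subsetneq X$, and condition ($\Psi$-BOO-I) yields $B'\supsetneq B^{[2]}$ with $\Delta_T(B^{[2]},B')\le \mathscr C_1(T)$. If $\mu(B')\ge 2\mu(B)$, we set $\widetilde B:=B'$.

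If instead $\mu(B')<2\mu(B)$, then $B'\cap B\neq\varnothing$, so ball-hull control gives $B'\subset B^\P$. Hence $B^{[2]}\subsetneq B'\subset B^\P\subset B^{[2]}$, which forces $B^{[2]} = B^\P$ and $B'=B^\P$. In particular the hull of $B^\P$ is itself, so we may take $\widetilde B:=B^{[2]}$, for which $\widetilde B^{[1]}=\widetilde B$ and $\Delta_T(\widetilde B,\widetilde B)=0$.

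In the mixed situations we combine steps using Lemma \ref{Lm 3.3}(1) and Lemma \ref{Lm 3.4}(b),(c). Each such step costs only a bounded power of $\mathscr C_0$ times $\phi(\mathscr C_0^k t)/\phi(t)$, which is bounded by the $\mathscr G$ property.

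\emph{Part (2).} Let $B=B^\P$. Consider $\mathfrak B'':=\{G: G\cap B\ne\varnothing,\ \mu(G)>\mu(B)\}$, which is nonempty unless $B=X$ (in that case the claim is degenerate, and we take $\widetilde B=X$, interpreting $\mu(\widetilde B)\ge 2\mu(B)$ as vacuous or by convention). Pick $G\in\mathfrak B''$ with $\mu(G)<2\inf_{\mathfrak B''}\mu$, as in the proof in Section \ref{Section 2.1}. Then $B\subset G^\P$ and $\mu(G^\P)\ge\mu(G)>\mu(B)$.

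Next we apply part (1) to $G$, obtaining a ball of measure at least $2\mu(G)$ or a self-hull ball containing $G^{[2]}\supset B^{[2]}$. We chain the $\Delta_T$ bounds: Lemma \ref{Lm 3.4}(a) controls $\Delta_T(B^{[2]},G^{[2]})$ because $\mu(G^{[2]})\le 2\mathscr C_0^2\inf\mu\lesssim\mu(B)$, and the remaining piece is bounded via Lemma \ref{Lm 3.4}(b). The self-hull alternative is excluded by minimality of $\inf\mu$ unless the measure already doubles.

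\emph{Part (3).} Iterate: $B_{k+1}:=\widetilde{B_k}$ from (1), or from (2) when $B_k$ is self-hull. Applying (2) at every other step guarantees $\mu(B_{k+2})\ge2\mu(B_k)$, so $\mu(B_k)\to r=\sup\mu$ (or the sequence reaches $X$). All $B_k$ meet $B$, so Lemma \ref{Lm 3.1} gives $X\subset\bigcup B_k^\P\subset\bigcup B_{k+1}$.

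The main obstacle is Part (2). There, every $\Delta_T$ comparison must be controlled by measure ratios of balls that are only comparable through hull constants, and the degenerate case $B=X$ or $r<\infty$ must be handled via Lemma \ref{Lm 3.1}.
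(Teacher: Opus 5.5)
Your Part (1) is correct, but it takes a different route from the paper. The paper never invokes ($\Psi$-BOO-I) in (1). With $\mathscr{B}=\{A\in\mathfrak{B}: B\subset A^\P\}$ it compares $a=\sup\{\mu(A): A\in\mathscr{B},\ \mu(A)\le 2\mu(B)\}$ with $b=\inf\{\mu(A): A\in\mathscr{B},\ \mu(A)>2\mu(B)\}$. It takes $\widetilde B=B_1^\P$ (which turns out to be self-hull) if $b>\mathscr{C}_0^2a$, and $\widetilde B=B_2^{[3]}$ otherwise. In both cases $\mu(\widetilde B)\sim\mu(B^{[2]})$, so Lemma \ref{Lm 3.4}(a) alone gives the bound. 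The paper's version therefore needs only ($\Psi$-BOO-II) and the weak-type hypothesis, and it keeps $\widetilde B$ of comparable size. You instead apply ($\Psi$-BOO-I) to $B^{[2]}$; this is shorter but gives no upper control on $\mu(\widetilde B)$, which is harmless for (3). One correction: your sub-case $\mu(B')<2\mu(B)$ does not ``force'' $B'=B^\P=B^{[2]}$. The chain $B^{[2]}\subsetneq B'\subset B^\P\subset B^{[2]}$ is a contradiction. Hence the ball produced by ($\Psi$-BOO-I) always satisfies $\mu(B')>2\mu(B)$, and no ``mixed situations'' arise.

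The genuine gap is in Part (2). The step $\mu(G^{[2]})\le 2\mathscr{C}_0^2\inf_{\mathfrak{B}''}\mu\lesssim\mu(B)$ is unjustified and false in general. Nothing in B1--B4 guarantees a ball that meets a self-hull ball $B$, is larger than $B$, and has comparable measure. That is a doubling-type property of $\mathfrak{B}$, which is not assumed; compare Lemma \ref{Lm 3.4}(d), where such a property makes ($\Psi$-BOO-I) automatic.

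For example, take $X=\{0,1\}$ with $\mu(\{0\})=1$, $\mu(\{1\})=N\ge2$, $\mathfrak{B}=\{\{0\},\{1\},X\}$, $\{0\}^\P=\{0\}$ and $\{1\}^\P=X^\P=X$. This is a ball-basis with $\mathscr{C}_0=2$ for every $N$. For $B=\{0\}$ the only member of $\mathfrak{B}''$ is $X$, and Lemma \ref{Lm 3.4}(a) yields only $\Delta_T(B,X)\lesssim\mathscr{C}(T)\,(N+1)\phi(N+1)/\phi(1)$. This is unbounded in $N$ when, e.g., $\phi\equiv1$. Lemma \ref{Lm 3.4}(b) cannot repair this, because it carries $\Delta_T(B,G^{[2]})$ along multiplied by a further measure ratio.

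The missing idea is to apply ($\Psi$-BOO-I) to $B$ itself, as the paper does. Since $B^\P=B\subsetneq X$, there is $\widetilde B\supsetneq B$ with $\Delta_T(B^{[2]},\widetilde B)=\Delta_T(B,\widetilde B)\le\mathscr{C}_1(T)$. If $\mu(\widetilde B)\le2\mu(B)$, ball-hull control would give $\widetilde B\subset B^\P=B$, a contradiction. This is exactly your own Part (1) argument once the contradiction above is recognized.

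Also, the case $B=X$ cannot be handled ``by convention'': no ball then has measure at least $2\mu(X)$, so (2) genuinely fails there. It must be excluded, and in Part (3) the sequence should be made stationary once it reaches $X$, using $\Delta_T(X,X)=0$. With these repairs your Part (3) goes through via Lemma \ref{Lm 3.1}.
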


\begin{proof}
	We begin by proving part (1). Fix a ball \(B \in \mathfrak{B}\) and define  
	\begin{align*}
		\mathscr{B} := \{A \in \mathfrak{B}: B \subset A^\P\},
	\end{align*}
	Set  
	\begin{align}
		\label{3.7} & a := \sup_{A \in \mathscr{B}:\mu(A) \leq 2\mu(B)}\mu(A) \leq 2 \mu(B), \\
		\label{3.8} & b := \inf_{A \in \mathscr{B}:\mu(A) > 2\mu(B)}\mu(A) \geq 2 \mu(B).
	\end{align}
	By the definitions of \(a\) and \(b\) we can choose balls \(B_1, B_2 \in \mathscr{B}\) such that
	\begin{align}\label{3.9}
		\frac a2 < \mu(B_1) \leq a \leq 2\mu(B) \leq b \leq \mu(B_2) < 2b.
	\end{align}
	We now distinguish two cases.
	
	\begin{itemize}[leftmargin=2.2em]
		\item {\bf {\emph Case} 1} \(b > a \mathscr{C}_0^2\). Take \(\widetilde{B} := B_1^\P \supset B\). From (\ref{3.9}) we obtain  
		\begin{align}\label{3.10}
			\mu(\widetilde{B}^\P) = \mu(B_1^{[2]}) \leq \mathscr{C}_0^2 \mu(B_1) \leq a \mathscr{C}_0^2 < b.
		\end{align}
		By construction (\ref{3.7}) and (\ref{3.8}), there is no ball \(B' \in \mathscr{B}\) with \(a< \mu(B')<b\). Hence (\ref{3.10}) implies \(\mu(\widetilde{B}^\P)<a<2\mu(B_1)\). Since \(\widetilde{B}^\P \cap B_1 \ne \varnothing\), ball-hull control property forces \(\widetilde{B}^\P \subset B_1^\P = \widetilde{B} \subset \widetilde{B}^\P\), so that \(\widetilde{B}^\P = \widetilde{B}\). Consequently \(B^{[2]} \subset \widetilde{B}^{[2]} = \widetilde{B}^\P = \widetilde{B}\). Using (\ref{3.9}) again we get  
		\begin{align}\label{3.11}
			\mu((B^{[2]}) \leq \mu(\widetilde{B}) = \mu(B_1^\P) \leq \mathscr{C}_0 \mu(B_1) \leq 2\mathscr{C}_0 \mu(B) \leq 2\mathscr{C}_0 \mu(B^{[2]}).
		\end{align}
		
		\item {\bf {\emph Case} 2} \(b \leq a \mathscr{C}_0^2\).  
		Now take \(\widetilde{B} := B_2^{[3]} = (B_2^\P)^{[2]} \supset B^{[2]}\). From (\ref{3.9}) we have
		\begin{align*}
			\mu(\widetilde{B}) \geq \mu(B_2) \geq b \geq 2\mu(B),
		\end{align*}
		and  
		\begin{align}\label{3.12}
			\mu(B^{[2]}) \leq \mu(&\widetilde{B}) = \mu(B_2^{[3]}) \leq \mathscr{C}_0^3 \mu(B_2) \nonumber \\
			&\leq 2b \mathscr{C}_0^3 \leq 2a \mathscr{C}_0^5 \leq 4 \mathscr{C}_0^5 \mu(B) \leq 4 \mathscr{C}_0^5 \mu(B^{[2]}).
		\end{align}
		
	\end{itemize}
	Both (\ref{3.11}) and (\ref{3.12}) give \(\mu(\widetilde{B}) \sim \mu(B^{[2]})\). Applying Lemma \ref{Lm 3.4} part (a) we obtain  
	\begin{align*}
		\Delta_T(B^{[2]},\widetilde{B}) \lesssim \left(\mathscr{C}_2(T) +\ |T\|_{L^{(\Psi, \psi)}\to wL^{(\Psi,\psi)}}\right) \frac{\mu(\widetilde{B})\phi(\mu(\widetilde{B}))}{\mu(B^{[2]})\phi(\mu(B^{[2]}))} \lesssim \mathscr{C}(T).
	\end{align*}
	where the last inequality follows from the equivalence \(\mu(\widetilde{B}) \sim \mu(B^{[2]})\) together with the properties of \(\phi\). This completes the proof of part (1).
	
	Now, we prove part (2). Fix a ball \(B \in \mathfrak{B}\) satisfying \(B^\P = B\). By condition (\(\Psi\)-BOO-I), there exists a ball \(\widetilde{B} \in \mathfrak{B}\) such that \(B \subsetneq \widetilde{B}\) and \(\Delta_T(B, \widetilde{B}) \lesssim \mathscr{C}_1(T)\). Using the identity \(B^\P = B\) twice we obtain  
	\begin{align*}
		B^{[2]} = B \subsetneq \widetilde{B} \quad \text{and} \quad \Delta_T(B^{[2]}, \widetilde{B}) = \Delta_T(B, \widetilde{B}) \lesssim \mathscr{C}_1(T).
	\end{align*}
	Assume for contradiction that \(\mu(\widetilde{B}) \leq 2 \mu(B)\). Then by ball-hull control property and the fact that \(\widetilde{B} \cap B \ne \varnothing\), we would have \(\widetilde{B} \subset B^\P = B\), contradicting the strict inclusion \(B \subsetneq \widetilde{B}\). Therefore \(\mu(\widetilde{B}) \geq 2 \mu(B)\), which completes the proof of part (2).
	
	Part (3) follows by an inductive construction based on parts (1) and (2). Set \(B_0 := B\). Suppose the ball \(B_k\) has already been chosen; using either part (1) or part (2) we select \(B_{k+1} \in \mathfrak{B}\) (\(k \geq 0\)) such that  
	\begin{align*}
		B_k^{[2]} \subset B_{k+1}, \quad \Delta_T(B_k^{[2]}, B_{k+1}) \lesssim \mathscr{C}(T), \quad \text{and} \quad \mu(B_{k+1}) \geq 2 \mu(B_k), \quad k \geq 0.
	\end{align*}
	Thus we obtain a sequence \(\{B_k\}_{k \geq 0}\subset\mathfrak{B}\). Lemma \ref{Lm 3.1} then yields \(X = \bigcup_{k \geq 0} B_k\), and the desired properties hold for all \(k \geq 0\). This completes the proof of the Lemma \ref{Lm 3.5}.
\end{proof}

\begin{remark}
	For convenience, we note that hereafter the notation \(\mathscr{C}(T)\) will always refer to the constant introduced in Lemma \ref{Lm 3.5}; that is,
	\[
	\mathscr{C}(T) := \mathscr{C}_1(T) + \mathscr{C}_2(T) + \|T\|_{L^{(\Psi, \psi)}(X) \to wL^{(\Psi,\psi)}(X)}.
	\]
\end{remark}

\begin{lemma}\label{Lm 3.6}
	Let \((X,\,\mathfrak{M},\,\mu)\) be a measure space endowed with a ball-basis \(\mathfrak{B}\), and assume that \(T\) is a \(\Psi\)-BOO with \(T \in \mathbb{W}_{\Psi, \psi}\). Set \(\lambda \geq 3 \mathscr{C}_0^4\), let \(F \subset X\) be a measurable set, and choose a ball \(A \in \mathfrak{B}\) such that
	\begin{align*}
		F \cap A \ne \varnothing \quad \text{and} \quad \mu(F) \leq \lambda^{-1}\mu(A).
	\end{align*} 
	Then there exists a family \(\mathcal{G} \subset \mathfrak{B}\) satisfying the following properties:
	
	\begin{itemize*}
		\item[(a)] For every \(G \in \mathcal{G}\), \(\displaystyle F \cap A^\P \cap G \ne \varnothing\),
		
		\vspace{6pt}
		
		\item[(b)] \(\displaystyle F \cap A^\P \subset \bigcup_{G \in \mathcal{G}} G \; a.s.\),
		
		\vspace{6pt}
		
		\item[(c)] \(\displaystyle \mu\left(\bigcup_{G \in \mathcal{G}} G^\P\right) \leq 3\lambda^{-1}\mathscr{C}_0^2 \mu(A)\),
		
		\vspace{6pt}
		
		\item[(d)] For each \(G \in \mathcal{G}\) one can find a ball \(\widetilde{G} \in \mathfrak{B}\) such that  
		\begin{align*}
			\widetilde{G} \not\subset F, \quad G^{[2]} \subset \widetilde{G} \subset A^\P, \quad \text{and} \quad \Delta_T(G^{[2]},\widetilde{G}) \lesssim \mathscr{C}(T),
		\end{align*}
		with constants independent of \(F\), \(A\), \(\lambda\) and \(\mathcal{G}\).
	\end{itemize*}
\end{lemma}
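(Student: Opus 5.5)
This is the abstract analogue of the covering lemma of Karagulyan and Cao; I would build the family through a Lebesgue-density/maximal-ball selection in three steps.

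\textbf{Step 1: the covering family.} By Lemma \ref{Lm more}(5) the basis has the density property. Hence almost every $x\in F\cap A^\P$ is a density point of $F$, so there are balls $B\ni x$ with $\mu(B\cap F)>\frac12\mu(B)$. Among all balls $G\ni x$ with $G\cap F\cap A^\P\neq\varnothing$ and $\mu(G\cap F)>\frac12\mu(G)$, I take one of nearly maximal measure: its measure should be at least half the supremum. The supremum is finite, since any such $G$ satisfies $\mu(G)<2\mu(F)\le 2\lambda^{-1}\mu(A)$. Let $\mathcal{G}$ be the collection of these chosen balls. Properties (a) and (b) then hold by construction.

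\textbf{Step 2: the measure bound (c).} Since $\mu(G)\le 2\lambda^{-1}\mu(A)<\mu(A)$ and $G\cap A^\P\neq\varnothing$, the ball-hull control property gives $G\subset A^{[2]}$, so $\bigcup\mathcal{G}$ is bounded. I apply Lemma \ref{Lm more}(1) to extract a pairwise disjoint subfamily $\{G_k\}$ with $\bigcup_{G\in\mathcal G}G\subset\bigcup_k G_k^\P$. Then
\[
\mu\Bigl(\bigcup_k G_k^\P\Bigr)\le\mathscr{C}_0\sum_k\mu(G_k)<2\mathscr{C}_0\sum_k\mu(G_k\cap F)\le2\mathscr{C}_0\mu(F).
\]
To get the stated bound with the full hulls $G^\P$, I would replace $\mathcal{G}$ by $\{G_k^\P\}$, keeping only those meeting $F\cap A^\P$. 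Alternatively, I would bound $\mu(\bigcup_{G\in\mathcal G}G^\P)$ by a second application of the same disjoint-selection argument to the hulls. Either way, the constant $3\lambda^{-1}\mathscr{C}_0^2\mu(A)$ should come out.

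\textbf{Step 3: property (d), the expected obstacle.} For each $G$, apply Lemma \ref{Lm 3.5}(1)/(2) to get $\widetilde G\supset G^{[2]}$ with $\Delta_T(G^{[2]},\widetilde G)\lesssim\mathscr C(T)$. From its proof we also have $\mu(\widetilde G)\lesssim\mathscr C_0^5\mu(G)$, and either $\mu(\widetilde G)\ge2\mu(G)$ or $\widetilde G^\P=\widetilde G$.

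\emph{$\widetilde G\not\subset F$.} If $\mu(\widetilde G)\ge 2\mu(G)$ and $\widetilde G\subset F$, then $\widetilde G$ is itself a competitor ball with density $1$ and larger measure. This contradicts the near-maximality of $G$, once the selection is made with factor $2$ (i.e. $\mu(G)$ at least half the supremum). If instead $\widetilde G^\P=\widetilde G\subset F$, I would use the fact that $\mu(\widetilde G)\le\mu(F)<\mu(A)$ with $\widetilde G\cap A\neq\varnothing$ forces $A\subset\widetilde G^\P=\widetilde G$. Thus $\mu(A)\le\mu(F)$, which contradicts $\mu(F)\le\lambda^{-1}\mu(A)$.

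\emph{$\widetilde G\subset A^\P$.} Use $\mu(\widetilde G)\lesssim \mathscr C_0^{5}\mu(G)\le 2\mathscr C_0^5\lambda^{-1}\mu(A)\le 2\mu(A)$, together with $\widetilde G\cap A\ne\varnothing$ and ball-hull control. Making $\widetilde G$ actually meet $A$, rather than only $A^\P$, is where the lower bound $\lambda\ge3\mathscr C_0^4$ must be spent. If it only meets $A^\P$, I would first replace $A$ by its hull in the argument and absorb one more factor $\mathscr C_0$.

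The main difficulty is this compatibility between the maximal selection and the constant bookkeeping in (c)–(d); everything else is routine.
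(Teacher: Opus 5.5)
Steps 1--2 are essentially sound, and your second option does close (c). Applying Lemma \ref{Lm more}(1) to the bounded family $\{G^\P: G\in\mathcal{G}\}$ gives balls $G_k$ whose hulls, and hence the $G_k$ themselves, are pairwise disjoint. So $\mu(\bigcup_{G}G^\P)\le\mathscr{C}_0^2\sum_k\mu(G_k)<2\mathscr{C}_0^2\sum_k\mu(G_k\cap F)\le 2\mathscr{C}_0^2\lambda^{-1}\mu(A)$.

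The genuine gap is in (d), in the alternative $\widetilde G^\P=\widetilde G\subset F$. There you use ball-hull control with the roles reversed. Concluding $A\subset\widetilde G^\P$ would require $\mu(A)\le2\mu(\widetilde G)$, whereas you have $\mu(\widetilde G)\le\mu(F)\le\lambda^{-1}\mu(A)$, which only yields $\widetilde G\subset A^\P$. So there is no contradiction, and this case really occurs. In the dyadic lattice of $\mathbb{R}^2$ one may take $Q^\P=Q$ for every cube, so $\mathscr{C}_0=1$. Let $A$ be a dyadic cube, $F$ one of its grandchildren, and $\lambda=16$. For every $x\in F$ your near-maximal selection is forced to be $G=F$, and the construction in the proof of Lemma \ref{Lm 3.5}(1) returns $\widetilde G=F\subset F$.

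Switching to Lemma \ref{Lm 3.5}(2) in this case does get you out of $F$. But that ball comes from ($\Psi$-BOO-I) and has no upper bound on its measure, so your proof of $\widetilde G\subset A^\P$, which rests on $\mu(\widetilde G)\lesssim\mathscr{C}_0^5\mu(G)$, is unavailable. Even in the branch covered by Lemma \ref{Lm 3.5}(1), its proof only gives $\mu(\widetilde G)\le 4\mathscr{C}_0^5\mu(G)<8\mathscr{C}_0^5\lambda^{-1}\mu(A)$, and $\lambda\ge3\mathscr{C}_0^4$ does not push this below $2\mu(A)$. Also, the near-maximal choice must be strict, $\mu(G)>\frac12\sup$; otherwise $\mu(\widetilde G)\ge2\mu(G)$ need not exceed the supremum.

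What is missing is a way to produce a $\widetilde G$ that escapes $F$ without any upper control on its size. The paper builds this into the construction:
\begin{enumerate}
\item It covers $F\cap A^\P$ by a sequence $\mathfrak{B}'$ with $\sum_{B\in\mathfrak{B}'}\mu(B)\le2\mathscr{C}_0\mu(F\cap A^\P)$, using Lemma \ref{Lm more}(3).
\item From each such $B$ it runs the chain $B=B_0\subset B_1\subset\cdots$ of Lemma \ref{Lm 3.5}(3).
\item It stops at the first $k$ with $B_{k+1}^\P\not\subset F$ and takes $G=B_k$.
\item It sets $\widetilde G=B_{k+1}^\P$ if $\mu(B_{k+1}^\P)\le\mu(A)$, and $\widetilde G=A^\P$ otherwise.
\end{enumerate}
Non-containment in $F$ is then automatic, by the stopping rule or because $\mu(A^\P)\ge\mu(A)>\mu(F)$. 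The $\Delta_T$-bound needs only one chain step plus Lemma \ref{Lm 3.4}(c). In the capped case it uses the sandwich $G^{[2]}\subset A^\P\subset B_{k+1}^{[2]}$ together with the monotonicity in Lemma \ref{Lm 3.3}(1), so the size of $B_{k+1}$ never matters. Property (c) follows from the dichotomy that either $G\in\mathfrak{B}'$ or $G^\P\subset F$.

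Your density-maximal selection can be kept if you add these two ingredients. First, when Lemma \ref{Lm 3.5}(1) returns a self-hull ball inside $F$, take that ball as $G$; it is still admissible, with density one. Apply Lemma \ref{Lm 3.5}(2) to it: the output has measure exceeding the supremum, so it cannot lie in $F$. Second, cap by $A^\P$ as above whenever the enlarged ball is too big.
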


\begin{proof}
	We begin by showing parts (a) and (b). Set \(E := F \cap A^\P\) and apply Lemma \ref{Lm more} part (3) to obtain a family \(\mathfrak{B}' \subset \mathfrak{B}\) such that for every \(B \in \mathfrak{B}'\),
	\begin{align}\label{3.13}
		E \cap B \ne \varnothing, \quad E \subset \bigcup_{B \in \mathfrak{B}'}B \; a.s., \quad \text{and} \quad \sum_{B \in \mathfrak{B}'}\mu(B) \leq 2 \mathscr{C}_0 \mu(E).
	\end{align}
	Fix \(B \in \mathfrak{B}'\). By Lemma \ref{Lm 3.5} part (3) we can find a sequence \(\{B_k\}_{k \geq 0} \subset \mathfrak{B}\) with \(B_0 = B\) satisfying  
	\begin{align}\label{3.14}
		X = \bigcup_{k \geq 0} B_k, \quad B_k^{[2]} \subset B_{k+1}, \quad \text{and} \quad \Delta_T(B_k^{[2]}, B_{k+1}) \lesssim \mathscr{C}(T), \quad k \geq 0.
	\end{align}
	We now construct \(\mathcal{G}\). For each \(B \in \mathfrak{B}'\) let \(k \geq 0\) be the smallest index for which \(B_{k+1}^\P \not\subset F\) and take \(G = B_k\). Denote by \(\mathcal{G}\) the collection of all such balls \(G\). Thus, for every \(G \in \mathcal{G}\) there exists \(B \in \mathfrak{B}'\) such that either \(G = B\) or, for some \(k \geq 1\), \(G = B_k \supset B^{[2]}\). Using the first two relations in (\ref{3.13}) we obtain  
	\begin{align*}
		F \cap A^\P \cap G = E \cap G \ne \varnothing \quad \text{and} \quad  F \cap A^\P = E \subset \bigcup_{G \in \mathcal{G}}G \; a.s.,
	\end{align*}
	which establishes parts (a) and (b).
	
	Next we prove part (c). For the family \(\mathcal{G}\) constructed above, observe that the condition \(G \not\subset F\) leads to the following two alternatives:
	\begin{itemize*}
		\item \(G = B_0\);
		\item or, for some \(k \geq 1\), \(G = B_k\) implies \(G^\P = B_k^\P \subset F\).
	\end{itemize*}
	Consequently,
	\begin{align}\label{3.15}
		\mu\left(\bigcup_{G \in\mathcal{G}}G^\P\right) = \mu & \left(\bigcup_{G \in \mathcal{G}: G^\P \subset F} G^\P\right) + \mu\left(\bigcup_{G \in\mathcal{G}: G^\P \not\subset F} G^\P \right)  \nonumber  \\
		& \leq \mu(F)+ \mu\left( \bigcup_{B \in \mathfrak{B}'} B^\P\right) \leq \mu(F) + \sum_{B \in \mathfrak{B}'}\mu(B^\P)  \nonumber  \\
		&\leq \mu(F) + \mathscr{C}_0 \sum_{B \in \mathfrak{B}'}\mu(B) \leq \mu(F) + \mathscr{C}_0 \cdot 2 \mathscr{C}_0 \mu(E)  \nonumber  \\
		&\leq (1 + 2 \mathscr{C}_0^2) \mu(F) \leq (1 + 2\mathscr{C}_0^2)\lambda^{-1}\mu(A)\leq 3 \lambda^{-1} \mathscr{C}_0^2 \mu(A)
	\end{align}
	which gives part (c).
	
	Finally we verify part (d). Suppose \(G = B_k \in \mathcal{G}\) for some \(k \geq 0\). Define  
	\begin{align*}
		\widetilde{G} := 
		\left\{ 
		\begin{array}{ll}
			A^\P,  & \mu(B_{k+1}^\P) > \mu(A), \\
			B_{k+1}^\P,  & \mu(B_{k+1}^\P) \leq \mu(A). \\
		\end{array}
		\right.
	\end{align*}
	By the assumptions we have \(\mu(A^\P)\geq\mu(A)\geq \lambda\mu(F)\geq \mu(F)\), which implies \(A^\P \not\subset F\). Consequently, for every \(G \in \mathcal{G}\), it follows that \(\widetilde{G} \not\subset F\). Moreover, from (\ref{3.15}) we obtain \(\mu(G^{[2]}) \leq \mathscr{C}_0^2\mu(G) \leq \mathscr{C}_0^2 \cdot 3 \lambda^{-1} \mathscr{C}_0^2 \mu(A) \leq \mu(A)\). Since \(G \cap A \ne \varnothing\), ball-hull control property yields \(G^{[2]} \subset A^\P\).
	\begin{itemize}[leftmargin=2.2em]
		\item If \(\mu(B_{k+1}^\P) > \mu(A)\), then \(G^{[2]} \subset A^\P = \widetilde{G} \subset B_{k+1}^{[2]}\). Applying Lemma \ref{Lm 3.3} part (1) and Lemma \ref{Lm 3.4} part (c) we get  
		\begin{align}\label{3.16}
			\Delta_T(G^{[2]},\widetilde{G}) \lesssim \Delta_T(B_k^{[2]}, B_{k+1}^{[2]}) \lesssim \mathscr{C}(T) + \Delta_T(B_k^{[2]}, B_{k+1}) \lesssim \mathscr{C}(T).
		\end{align}
		
		\item If \(\mu(B_{k+1}^\P) \leq \mu(A)\), then the conditions \(\mu(B_{k+1}^\P) \leq \mu(A)\) and \(B_{k+1}^\P \cap A \ne \varnothing\) together with ball-hull control property imply \(G^{[2]} = B_k^{[2]} \subset B_{k+1} \subset B^\P_{k+1} = \widetilde{G} \subset A^\P\). Again using Lemma \ref{Lm 3.3} part (1) and Lemma \ref{Lm 3.4} part (c) we obtain  
		\begin{align}\label{3.17}
			\Delta_T(G^{[2]},\widetilde{G}) = \Delta_T(B_k^{[2]},B^\P_{k+1}) \lesssim \mathscr{C}(T) + \Delta_T(B_k^{[2]}, B_{k+1}) \lesssim \mathscr{C}(T).
		\end{align}
	\end{itemize}
	The last estimates in (\ref{3.16}) and (\ref{3.17}) follow from the bound in (\ref{3.14}). This completes the proof of part (d) and hence of the lemma.
\end{proof}

Let \(T\) be a \(\mathbb{B}\)-valued operator such that \(Tf(x) = \|\mathscr{T}f(x)\|_{\mathbb{B}}\). The truncation operator is given by  
\begin{align*}
	T^*f(x) := \sup_{B \in \mathfrak{B} : x \in B} \left\| \mathscr{T}f(x) - \mathscr{T}(f \, \mathbb{I}_{B^\P})(x) \right\|_{\mathbb{B}}.
\end{align*}

\begin{theorem}\label{Th 3.7}
	Let \((X,\,\mathfrak{M},\,\mu)\) be a measure space endowed with a ball-basis \(\mathfrak{B}\), and let \((\Psi,\psi) \in E_{mb}\). Assume that \(T\) is a \(\mathbb{B}\)-valued linear operator satisfying the condition (\(\Psi\)-BOO-II) and that \(T \in \mathbb{W}_{\Psi, \psi}\). Then for every \(f \in L^{(\Psi, \psi)}(X)\), we have
	
	\begin{align*}
		\|T^*\|_{L^{(\Psi, \psi)}(X) \to wL^{(\Psi, \psi)}(X)} \lesssim \mathscr{C}_2(T) + \|T\|_{L^{(\Psi, \psi)}(X) \to wL^{(\Psi, \psi)}(X)}.
	\end{align*}
\end{theorem}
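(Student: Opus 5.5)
The plan is to prove the weak-type bound for \(T^*\) by reducing the truncated operator to \(T\) itself plus a controlled error. This follows the classical Karagulyan/Cao argument, with averages replaced by the Luxemburg quantities \(\|f\|_{\Phi,\phi,B}\) and \(\langle\|f\|\rangle_{\Phi,\phi,B}\).

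\textbf{Step 1: pointwise reduction.} Fix \(f\in L^{(\Psi,\psi)}(X)\), a point \(x\), and a ball \(B\ni x\). By (\(\Psi\)-BOO-II), for every \(x'\in B\),
\begin{align*}
\left\|\mathscr{T}f(x)-\mathscr{T}(f\mathbb{I}_{B^\P})(x)\right\|_{\mathbb{B}}\le \mathscr{C}_2(T)\langle\|f\|\rangle_{\Phi,\phi,B}+\|\mathscr{T}f(x')\|_{\mathbb{B}}+\|\mathscr{T}(f\mathbb{I}_{B^\P})(x')\|_{\mathbb{B}}.
\end{align*}
We then choose \(x'\) well. Apply the \(\mathbb{W}_{\Psi,\psi}\) condition (\ref{D1.6}) twice with \(A=B\): once with \(B^\P\) in the role of the big ball, giving the function \(f\mathbb{I}_{B^\P}\), and once with an exhausting sequence of balls, which by Lemma \ref{Lm 3.1} tends to \(f\) itself. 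Each exceptional set has measure at most \(\lambda\mu(B)\). Taking \(\lambda<1/2\), we find \(x'\in B\) outside both exceptional sets, so
\begin{align*}
\|\mathscr{T}f(x')\|_{\mathbb{B}}+\|\mathscr{T}(f\mathbb{I}_{B^\P})(x')\|_{\mathbb{B}}\lesssim \Psi^{-1}\!\left(\tfrac{\psi(\mu(B))}{\lambda}\right)\|T\|_{L^{(\Psi,\psi)}\to wL^{(\Psi,\psi)}}\|f\|_{\Phi,\phi,B}.
\end{align*}
The \(E_{mb}\) hypothesis on \((\Psi,\psi)\) bounds the factor \(\Psi^{-1}(\cdot)\) uniformly, exactly as in (\ref{3.3}). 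Bounding \(\|f\|_{\Phi,\phi,B}\le\langle\|f\|\rangle_{\Phi,\phi,B}\) and taking the supremum over \(B\ni x\), we would obtain the pointwise estimate
\begin{align*}
T^*f(x)\lesssim \left(\mathscr{C}_2(T)+\|T\|_{L^{(\Psi,\psi)}\to wL^{(\Psi,\psi)}}\right)\mathcal{M}_{\mathfrak{B},\Phi,\phi}f(x).
\end{align*}
Here \(\sup_{B\ni x}\langle\|f\|\rangle_{\Phi,\phi,B}=\mathcal{M}_{\mathfrak{B},\Phi,\phi}f(x)\), since the balls containing some \(B\ni x\) also contain \(x\).

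\textbf{Step 2: weak bound for the maximal operator.} It remains to show that \(\mathcal{M}_{\mathfrak{B},\Phi,\phi}\) maps \(L^{(\Psi,\psi)}\) into \(wL^{(\Psi,\psi)}\) with a bound that is an absolute constant. This follows from the computation at the end of Section \ref{Section 2.1}: \(\mathcal{M}_{\mathfrak{B},\Phi,\phi}f\le\mathscr{K}\|f\|_{L^{(\Psi,\psi)}}\) pointwise, together with the fact that the \(E_{mb}\) condition makes constants uniformly bounded in the relevant Luxemburg norms. We then use the monotonicity (\ref{ine main3}) together with the embedding \(L^{(\Psi,\psi)}\subset wL^{(\Psi,\psi)}\).

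\textbf{Main obstacle.} The hard step is the choice of \(x'\) for the untruncated term \(\mathscr{T}f(x')\). Condition (\ref{D1.6}) is stated only for \(f\mathbb{I}_{B}\) with \(B\) a ball, not for \(f\) itself. The plan is to pass to the limit along the sequence \(B_k\) from Lemma \ref{Lm 3.5}(3), which exhausts \(X\). We control \(\mathscr{T}(f\mathbb{I}_{B_k^\P})-\mathscr{T}(f\mathbb{I}_{B^\P})\) on \(B\) via the \(\Delta_T\) bounds of Lemma \ref{Lm 3.4}(a). If needed, we would instead avoid \(\mathscr{T}f(x')\) altogether by working with differences of truncations and then letting \(k\to\infty\). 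The a.e. convergence \(\mathscr{T}(f\mathbb{I}_{B_k^\P})\to\mathscr{T}f\) would be justified from the weak-type boundedness of \(T\) and linearity of \(\mathscr{T}\).
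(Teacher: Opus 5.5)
There is a genuine gap, and it is the step you flag as the main obstacle. Step 1 needs a point \(x'\in B\) at which the \emph{untruncated} quantity \(\|\mathscr{T}f(x')\|_{\mathbb{B}}\) is bounded by \(\|T\|_{L^{(\Psi,\psi)}\to wL^{(\Psi,\psi)}}\langle\|f\|\rangle_{\Phi,\phi,B}\). Nothing in the hypotheses supplies this, since (\ref{D1.6}) only concerns \(\mathscr{T}(f\,\mathbb{I}_{B'})\) for balls \(B'\).

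Your fixes do not close it. The limiting argument needs \(\mathscr{T}(f\,\mathbb{I}_{B_k^\P})\to\mathscr{T}f\) locally in measure. Weak-type boundedness plus linearity give this only if \(\|f\,\mathbb{I}_{X\setminus B_k^\P}\|_{L^{(\Psi,\psi)}}\to 0\). That fails in Orlicz--Morrey spaces, whose norm is not absolutely continuous: for \(f(x)=|x|^{-n/p}\in\mathcal{M}^p_q(\mathbb{R}^n)\) with \(q<p\), every tail \(f\,\mathbb{I}_{\mathbb{R}^n\setminus K}\), \(K\) bounded, has norm bounded below by a fixed constant, by scaling. The fallback through Lemma \ref{Lm 3.4}(a) does not help either. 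The factor \(\mu(B_k)\phi(\mu(B_k))/(\mu(B)\phi(\mu(B)))\) is unbounded as \(k\to\infty\) unless \(r\phi(r)\) is bounded, and you would still need the convergence at the end.

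More fundamentally, the pointwise bound Step 1 aims at, \(T^*f\lesssim(\mathscr{C}_2(T)+\|T\|)\,\mathcal{M}_{\mathfrak{B},\Phi,\phi}f\), is false for the model operators the theorem is meant to cover. Take the Hilbert transform, \(f=\mathbb{I}_{[0,1]}\), \(x=\varepsilon\), and any interval \(B\ni\varepsilon\) with \(B^\P=\ell B\subset(0,1)\). A direct computation gives \(|H(f\,\mathbb{I}_{\mathbb{R}\setminus B^\P})(\varepsilon)|\ge\frac{1}{\pi}\log\frac{1-\varepsilon}{\varepsilon}-\frac{1}{\pi}\log\frac{\ell+1}{\ell-1}\), while \(M_{\mathfrak{B}}f\le 1\). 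In fact \(|Hf|\approx\frac{1}{\pi}\log\frac{1}{\varepsilon}\) on all of such a \(B\), so no good \(x'\) exists. This is why Cotlar's inequality needs an \(M(Tf)\) term.

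The paper never controls \(\mathscr{T}f\) pointwise. Its argument runs as follows:
\begin{enumerate}
\item Fix \(\lambda\). For each \(x\in E=\{x\in B:T^*f(x)>\lambda\}\), pick a ball \(B_x\ni x\) realizing the truncation.
\item Extract a disjoint subfamily \(\{B_k\}\) with \(E\subset\bigcup_k B_k^\P\), using Lemma \ref{Lm more}(1).
\item Apply (\ref{D1.6}) only to the local pair \((B_k,B_k^\P)\). This gives a subset \(\widetilde{B}_k\subset B_k\) of proportional measure on which \(\|\mathscr{T}(f\,\mathbb{I}_{B_k^\P})\|_{\mathbb{B}}\) is small.
\item For \(x\in\widetilde{B}_k\) with \(\mathcal{M}_{\mathfrak{B},\Phi,\phi}f(x)\le\delta\lambda\), condition (\(\Psi\)-BOO-II) transfers the largeness of the truncation at the selecting point \(x_k\) to \(x\). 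This forces \(|Tf(x)|\ge\lambda/2\).
\item Disjointness then gives \(\mu(E)\lesssim\mu(\{|Tf|\ge\lambda/2\})+\mu(\{\mathcal{M}_{\mathfrak{B},\Phi,\phi}f>\delta\lambda\})\).
\end{enumerate}
So the untruncated operator is handled through its own weak-type bound, in distribution rather than pointwise. That is the idea your plan is missing. Your Step 2, the weak bound for \(\mathcal{M}_{\mathfrak{B},\Phi,\phi}\), is the same ingredient the paper uses at the end.
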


\begin{proof}
	Fix a ball \(B \in \mathfrak{B}\)) and a number \(\lambda > 0\). Define \(E:=\{x \in B: T^*f(x)> \lambda\}\), and assume for the moment that \(E\) is bounded. For each \(x \in E\) there exists a ball \(B_x = B(x) \in \mathfrak{B}\) such that \(\left\| \mathscr{T}f(x) - \mathscr{T}(f \, \mathbb{I}_{B_x^\P})(x) \right\|_{\mathbb{B}} > \lambda\). Notice that \(E \subset \bigcup_{x \in E}B_x\) and that \(E\) is bounded. By Lemma \ref{Lm more} part (1) we can extract a pairwise disjoint subfamily \(\{B_k\}_{k \geq 0}\) with \(E \subset \bigcup_k B_k^\P\). For a parameter \(\kappa > 1\) set  
	\begin{align*}
		\widetilde{B}_k = \left\{x \in B_k: |T(f\,\mathbb{I}_{B_k^\P})(x)|< \Psi^{-1} (\kappa \, \psi(\mu(A))) \|T\|_{L^{(\Psi, \psi)} \to wL^{(\Psi, \psi)}}\langle \|f\|\rangle_{\Phi,\phi,B_k} \right\}.
	\end{align*}
	Since \(T \in \mathbb{W}_{\Psi, \psi}\), we have \(\mu(X \setminus \widetilde{B}_k) \leq \kappa^{-1} \mu(B_k)\). Consequently,
	\begin{align}\label{3.18}
		\mu(\widetilde{B}_k) \geq \mu(B_k) - \mu(X\setminus\widetilde{B}_k) \geq (1-\kappa^{-1}) \mu(B_k).
	\end{align}
	Now fix an index \(k\) and write \(B_k = B(x_k)\). Take a point \(x \in \widetilde{B}_k \setminus \{\mathcal{M}_{\mathfrak{B},\Phi,\phi}f > \delta \lambda\}\), where  
	\begin{align}\label{3.19}
		\delta := \frac{1}{4\Psi^{-1} (\kappa \, \psi(\mu(A))) (\mathscr{C}_2 (T) + \|T\|_{L^{(\Psi, \psi)} \to wL^{(\Psi, \psi)}})}.
	\end{align}
	From the definition of \(\widetilde{B}_k\) and (\ref{3.19}) we obtain  
	\begin{align}\label{3.20}
		\|\mathscr{T}(f \, \mathbb{I}_{B_k^\P})(x)\|_{\mathbb{B}} = |T(f\, \mathbb{I}_{B_k^\P}) & (x)|< \Psi^{-1} (\kappa \, \psi(\mu(A))) \|T\|_{L^{(\Psi, \psi)} \to wL^{(\Psi, \psi)}} \langle \|f\|\rangle_{\Phi,\phi,B_k} \nonumber \\
		&\leq \Psi^{-1} (\kappa \, \psi(\mu(A))) \|T\|_{L^{(\Psi, \psi)} \to wL^{(\Psi, \psi)}} \mathcal{M}_{\mathfrak{B},\Phi,\phi}f(x) \nonumber \\
		&\leq \Psi^{-1} (\kappa \, \psi(\mu(A))) \|T\|_{L^{(\Psi, \psi)} \to wL^{(\Psi, \psi)}} \delta \lambda \leq \frac{\lambda}{4}.
	\end{align}
	On the other hand, condition (\(\Psi\)-BOO-II) together with (\ref{3.19}) (for a suitable choice of \(\kappa\)) gives  
	\begin{align}\label{3.21}
		\mathscr{H}(x) := & \|(\mathscr{T}f - \mathscr{T}(f \,\mathbb{I}_{B_k^\P}))(x) - (\mathscr{T}f - \mathscr{T}(f\, \mathbb{I}_{B_k^\P}))(x_k) \|_{\mathbb{B}}  \nonumber \\
		&\leq \mathscr{C}_2(T) \, \langle \|f\|\rangle_{\Phi,\phi,B} \leq \mathscr{C}_2(T) \mathcal{M}_{\mathfrak{B},\Phi,\phi}f(x) \leq \mathscr{C}_2(T) \delta \lambda \leq \frac{\lambda}{4}.
	\end{align}
	Combining (\ref{3.20}) and (\ref{3.21}) we get  
	\begin{align*}
		\lambda \leq \|\mathscr{T} & f(x_k) - \mathscr{T} (f \, \mathbb{I}_{B_k^\P}) (x_k)\|_{\mathbb{B}}  \nonumber \\
		&\leq \| \mathscr{T}f(x) \|_{\mathbb{B}} + \|\mathscr{T}(f \, \mathbb{I}_{B_k^\P})(x) \|_{\mathbb{B}} + \mathscr{H}(x) \leq \|\mathscr{T}f(x)\|_{\mathbb{B}} + \frac{\lambda}{2},
	\end{align*}
	which implies \(\|\mathscr{T}f(x)\|_{\mathbb{B}} = |Tf(x)| \geq \lambda / 2\). Therefore, we can partition \(\widetilde{B}_k\) into two pieces:
	\begin{align}\label{3.22}
		\bigcup_k \widetilde{B}_k \subset \{x \in B:|Tf(x)| \geq \lambda/2\} \cup \{x \in B:\mathcal{M}_{\mathfrak{B},\Phi,\phi}f(x) > \delta \lambda\}.
	\end{align}
	From (\ref{3.18}) and (\ref{3.22}) we deduce  
	\begin{align*}
		\mu(E) & \leq \sum_k \mu(B_k^\P) \lesssim \sum_k \mu(B_k) \lesssim \sum_k \mu(\widetilde{B}_k)  \\
		&\leq \mu(\{x \in B:|Tf(x)| \geq \lambda/2\}) + \mu(\{x \in B: \mathcal{M}_{\mathfrak{B}, \Phi, \phi}f(x) > \delta \lambda\})  \\
		&\leq \frac{\mu(B)\psi(\mu(B))}{\Psi\left(\frac{\lambda/2} {\|T\|_{L^{(\Psi, \psi)} \to wL^{(\Psi, \psi)}}\|f\|_{\Psi,\psi, B}}\right)} + \frac{\mu(B)\psi(\mu(B))}{\Psi\left(\frac{\delta \lambda} {\|\mathcal{M}_{\mathfrak{B}, \Phi, \phi}\|_{L^{(\Psi, \psi)} \to wL^{(\Psi, \psi)}}\|f\|_{\Psi,\psi, B}}\right)}  \\
		&\lesssim \frac{\mu(B)\psi(\mu(B))}{\Psi\left(\frac{\lambda} {(\mathscr{C}_2(T) + \|T\|_{L^{(\Psi, \psi)} \to wL^{(\Psi, \psi)}}) \|f\|_{\Psi,\psi, B}}\right)}.
	\end{align*}
	Thus  
	\begin{align*}
		\Psi\left(\frac{\lambda} {(\mathscr{C}_2(T) + \|T\|_{L^{(\Psi, \psi)} \to wL^{(\Psi, \psi)}}) \|f\|_{\Psi,\psi, B}}\right)\mu(\{x \in B: T^*f(x)> \lambda\}) \lesssim \mu(B)\psi(\mu(B)).
	\end{align*}
	This inequality implies that 
	\begin{align*}
		\sup_{\lambda > 0}\lambda \, \mu\left(\left\{x \in B: \Psi\left( \frac{|T^*f(x)|}{(\mathscr{C}_2(T)+\|T\|_{L^{(\Psi, \psi)} \to wL^{ (\Psi, \psi)}})\|f\|_{\Psi,\psi, B}}\right) > \lambda\right\}\right) \lesssim \mu(B)\psi(\mu(B)).
	\end{align*}
	Consequently, it follows that
	\begin{align*}
		\|T^*f\|_{w,\Psi,\psi,B} \lesssim (\mathscr{C}_2(T) + \|T\|_{L^{ (\Psi, \psi)} \to wL^{(\Psi, \psi)}}) \|f\|_{\Psi,\psi, B}.
	\end{align*}
	Since the ball \(B\) was arbitrary, the above estimate implies that
	\begin{align*}
		\|T^*f\|_{wL^{(\Psi,\psi)}} \lesssim (\mathscr{C}_2(T) + \|T\|_{L^{ (\Psi, \psi)} \to wL^{(\Psi, \psi)}}) \|f\|_{L^{(\Psi,\psi)}}.
	\end{align*}
	Therefore, the proof is complete.
\end{proof}

Moreover, for the subsequent analysis, we need to introduce the following operator:
\begin{align*}
	\Upsilon f(x) := \max\left\{|Tf(x)|, \, T^*f(x), \, \mathscr{C}(T) \mathcal{M}_{\mathfrak{B}, \Phi, \phi} f(x) \right\}.
\end{align*}
Hence, it is immediate that
\begin{align}\label{3.23}
	\|\Upsilon\| := \|\Upsilon\|_{L^{ (\Psi, \psi)} \to wL^{(\Psi, \psi)}} \lesssim \mathscr{C}(T)
\end{align}
For a measure space \((X,\,\mathfrak{M},\,\mu)\) endowed with a ball-basis \(\mathfrak{B}\), we fix in the sequel a ball \(B_0 \in \mathfrak{B}\) satisfying
\begin{align}\label{3.24}
	{\rm supp}\,f \subset B_0^{[3]} , 
\end{align}
which entails no loss of generality for our analysis.

\begin{lemma}\label{Lm 3.8}
	Let \((X,\,\mathfrak{M},\,\mu)\) be a measure space endowed with a ball-basis \(\mathfrak{B}\). Let \(\Psi,\Phi\in\mathscr{Y}\) and \(\psi, \phi \in \mathscr{G}\). Suppose \(T\) is a \(\mathbb{B}\)-valued \(\Psi\)-BOO and \(T \in \mathbb{W}_{\Psi, \psi}\). Take \(\lambda \geq 3 \mathscr{C}_0^4\). For the ball \(B_0\) in (\ref{3.24}), there exists a family \(\mathcal{G} = \mathcal{G}(B_0) \subset \mathfrak{B}\) satisfying the following conditions:
	
	\begin{itemize}[leftmargin=1.5em]
		\item[({\it a})] For any \(A \subset \mathcal{G}\), there exists \(\mathcal{F}(A) \subset \mathcal{G}\) such that:
		
		\begin{itemize*}
			\item[({\it a}1)] \(A^\P \cap B \ne \varnothing\) for all \(B \in \mathcal{F}(A)\),
			
			\vspace{5pt}
			
			\item[({\it a}2)] \(\mu\left(\bigcup_{B \in \mathcal{F}(A)}B^\P\right) \leq 3 \lambda^{-1} \mathscr{C}_0^2 \mu(A)\),
			
			\vspace{5pt}
			
			\item[({\it a}3)] \(\Upsilon(f \, \mathbb{I}_{A^{[3]}})(x) \lesssim \mathscr{C}(T) \Psi^{-1}(\lambda \phi(\mu(A))) \|f\|_{\Phi, \phi, A^{[3]}}, \quad a.e. \; x \in A^\P \setminus \bigcup_{B \in \mathcal{F}(A)}B\).
		\end{itemize*}
		
		\vspace{5pt}
		
		\item[({\it b})] For any \(B \in \mathcal{F}(A)\), there exist \(\widetilde{B} \in \mathfrak{B}\) and \(\xi \in \widetilde{B}\), which yields the following result:
		\begin{itemize*}
			
			\vspace{5pt}
			
			\item[({\it b}1)] \(B^{[2]} \subset \widetilde{B} \subset A^\P\),
			
			\vspace{5pt}
			
			\item[({\it b}2)] \(\Upsilon(f \, \mathbb{I}_{A^{[3]}})(\xi) \lesssim \mathscr{C}(T) \Psi^{-1}(\lambda \phi(\mu(A))) \|f\|_{\Phi, \phi, A^{[3]}}\),
			
			\vspace{5pt}
			
			\item[({\it b}3)] \(\|\mathscr{T}(f\,\mathbb{I}_{\widetilde{B}^ \P})(x) - \mathscr{T}(f\,\mathbb{I}_{B^{[3]}})(x)\|_{\mathbb{B}} \lesssim \mathscr{C}(T) \Psi^{-1}(\lambda \phi(\mu(A))) \|f\|_{ \Phi,\phi,A^{[3]}}, \quad\text{for all} \; x \in B^{[2]}\).
		\end{itemize*} 
	\end{itemize}
\end{lemma}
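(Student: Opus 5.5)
The plan is to follow the Karagulyan/Cao scheme: first build, for each ball \(A\), an exceptional set where \(\Upsilon(f\,\mathbb{I}_{A^{[3]}})\) is large, then cover it with Lemma \ref{Lm 3.6}. Fix \(A\in\mathfrak{B}\) (in practice \(A=B_0\) or a ball already produced at an earlier stage; \(\mathcal{G}\) is the union of all generations). Set
\[
F:=\Bigl\{x\in A^\P:\ \Upsilon(f\,\mathbb{I}_{A^{[3]}})(x)>c\,\mathscr{C}(T)\,\Psi^{-1}(\lambda\phi(\mu(A)))\,\|f\|_{\Phi,\phi,A^{[3]}}\Bigr\},
\]
with \(c\) a large absolute constant.

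\textbf{Measure of \(F\).} The first step is to show \(\mu(F)\le\lambda^{-1}\mu(A)\). We combine the weak bound (\ref{3.23}) for \(\Upsilon\), i.e. \(\|\Upsilon\|\lesssim\mathscr{C}(T)\), with the \(\mathbb{W}_{\Psi,\psi,\lambda}\) estimate (\ref{D1.6}), applied on the ball \(A^\P\) with \(f\,\mathbb{I}_{A^{[3]}}\). For the maximal part \(\mathscr{C}(T)\mathcal{M}_{\mathfrak{B},\Phi,\phi}\), the balls meeting \(A^\P\) are split by size. The small ones lie in \(A^{[3]}\) by the ball-hull control property. The large ones are controlled by Lemma \ref{Lm 3.3}(2) and Proposition \ref{propB}. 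The \((\Phi,\phi)\in E_{mb}\) condition lets us compare \(\Psi^{-1}(\psi(\cdot)/\lambda)\) with \(\Psi^{-1}(\lambda\phi(\mu(A)))\) up to constants. The measure \(\mu(A^\P)\le\mathscr{C}_0\mu(A)\) is absorbed by taking \(\lambda\) large, which is where \(\lambda\ge3\mathscr{C}_0^4\) enters. If \(F\cap A=\varnothing\), we take \(\mathcal{F}(A)=\varnothing\) and (a3) is trivial. Otherwise Lemma \ref{Lm 3.6} applies.

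\textbf{Family and properties (a1)--(a3), (b1), (b2).} Lemma \ref{Lm 3.6} gives a family \(\mathcal{F}(A)\). Property (a1) comes from Lemma \ref{Lm 3.6}(a), property (a2) from (c), and property (a3) holds because \(F\cap A^\P\subset\bigcup_{B\in\mathcal{F}(A)}B\) a.s. by (b). For (b), Lemma \ref{Lm 3.6}(d) gives \(\widetilde{B}\) with \(B^{[2]}\subset\widetilde{B}\subset A^\P\), which is (b1), and \(\widetilde{B}\not\subset F\). Picking \(\xi\in\widetilde{B}\setminus F\) gives (b2) directly from the definition of \(F\).

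\textbf{Main obstacle: (b3).} Property (b3) needs a uniform bound on \(B^{[2]}\), not just at \(\xi\). We split
\[
\mathscr{T}(f\mathbb{I}_{\widetilde{B}^\P})-\mathscr{T}(f\mathbb{I}_{B^{[3]}})
=\bigl[\mathscr{T}(f\mathbb{I}_{\widetilde{B}^\P})-\mathscr{T}(f\mathbb{I}_{(B^{[2]})^\P})\bigr],
\]
noting \(B^{[3]}=(B^{[2]})^\P\). The bracket is then bounded, for \(x\in B^{[2]}\), by \(\Delta_T(B^{[2]},\widetilde{B})\,\|f\|_{\Phi,\phi,\widetilde{B}^\P}\lesssim\mathscr{C}(T)\|f\|_{\Phi,\phi,\widetilde{B}^\P}\), using Lemma \ref{Lm 3.6}(d). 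Since \(\widetilde{B}\subset A^\P\), we have \(\widetilde{B}^\P\subset A^{[2]}\subset A^{[3]}\), and \(f\) may be replaced by \(f\mathbb{I}_{A^{[3]}}\).

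The delicate point is comparing \(\|f\|_{\Phi,\phi,\widetilde{B}^\P}\) with \(\|f\|_{\Phi,\phi,A^{[3]}}\), since \(\widetilde{B}\) may be much smaller than \(A\). For this we use \(\xi\notin F\): since \(\xi\in\widetilde{B}\subset\widetilde{B}^\P\), we get \(\|f\mathbb{I}_{A^{[3]}}\|_{\Phi,\phi,\widetilde{B}^\P}\le\mathcal{M}_{\mathfrak{B},\Phi,\phi}(f\mathbb{I}_{A^{[3]}})(\xi)\le\Upsilon(f\mathbb{I}_{A^{[3]}})(\xi)/\mathscr{C}(T)\). Inserting (b2) then yields (b3). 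The only extra factor is a harmless \(\mathscr{C}(T)\), which is handled by normalizing the definition of \(F\) by \(\mathscr{C}(T)\) consistently.
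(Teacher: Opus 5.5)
Your argument is the paper's own. It uses the exceptional set of $\Upsilon(f\,\mathbb{I}_{A^{[3]}})$, applies Lemma~\ref{Lm 3.6} to it, picks a point $\xi\in\widetilde{B}\setminus F$ for (b2), and for (b3) uses the chain $\Delta_T(B^{[2]},\widetilde{B})\,\|f\,\mathbb{I}_{A^{[3]}}\|_{\Phi,\phi,\widetilde{B}^\P}\lesssim\mathscr{C}(T)\,\mathcal{M}_{\mathfrak{B},\Phi,\phi}(f\,\mathbb{I}_{A^{[3]}})(\xi)\le\Upsilon(f\,\mathbb{I}_{A^{[3]}})(\xi)$, followed by the iteration over generations. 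Placing $F$ inside $A^\P$ rather than $A^{[3]}$ is harmless: since $\widetilde{B}\subset A^\P$, the point $\xi$ still lies where $F$ is the full superlevel set. There is also no extra factor $\mathscr{C}(T)$ to normalize away, because $\Upsilon$ already contains $\mathscr{C}(T)\,\mathcal{M}_{\mathfrak{B},\Phi,\phi}$.

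One step is wrong, though. When $F\cap A=\varnothing$ you cannot take $\mathcal{F}(A)=\varnothing$. The set $F\subset A^\P$ may still meet $A^\P\setminus A$ in positive measure, while (a3) is required on all of $A^\P\setminus\bigcup_{B\in\mathcal{F}(A)}B$. That case still needs a covering of $F$ with the properties of Lemma~\ref{Lm 3.6}. The paper simply applies that lemma without checking $F\cap A\neq\varnothing$, but your explicit disposal of the case does not work.

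The bound $\mu(F)\le\lambda^{-1}\mu(A)$ is also not supplied by the tools you cite. Condition (\ref{D1.6}) concerns only the $T$-component of $\Upsilon$. The estimate (\ref{3.23}) is a global weak-type bound in the $(\Psi,\psi)$-norm. Neither gives a threshold in terms of $\|f\|_{\Phi,\phi,A^{[3]}}$ for $T^*$, nor for the small balls in the maximal part; that those balls lie in $A^{[3]}$ does not by itself control the Luxemburg averages over them. The paper merely asserts this estimate as well, so you are not behind it, but it is an unproved step in both.

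Two attributions should also be corrected:
\begin{itemize}
\item The passage from $\Psi^{-1}(\lambda\psi(\cdot))$ to $\Psi^{-1}(\lambda\phi(\mu(A)))$ comes from $\psi\le C(\psi,\phi)\phi$ in (\ref{Y-G condition}) together with $\Psi^{-1}(Ct)\le C\,\Psi^{-1}(t)$ for $C\ge 1$. It does not come from $E_{mb}$, which concerns $\Phi^{-1}$ and is not a hypothesis of the lemma.
\item The factor $\mu(A^\P)\le\mathscr{C}_0\mu(A)$ is absorbed by rescaling the parameter in (\ref{D1.6}), at the cost of a constant in $c$, not by taking $\lambda$ large. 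The condition $\lambda\ge 3\mathscr{C}_0^4$ is what Lemma~\ref{Lm 3.6}(d) needs.
\end{itemize}
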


\begin{proof}
	We first prove part ({\it a}). For this, we need to use Lemma \ref{Lm 3.6}, which, however, applies under the condition that \(\mu(F(B_0)) \leq \lambda^{-1} \mu(B_0)\). Consequently, for the number \(\lambda \mathscr{C}_0^3 \geq 3 \mathscr{C}_0^7 > 1\) chosen above, we define the following set
	\begin{align}\label{3.25}
		F(B_0) := \left\{x \in B_0^{[3]}: \Upsilon(f \, \mathbb{I}_{B_0^{[3]}})(x) > \Psi^{-1}\left( \lambda \mathscr{C}_0^3 \psi \left(\mu(B_0^{[3]}) \right) \right) \|\Upsilon\| \|f\|_{\Phi,\phi,B_0^{[3]}} \right\}.
	\end{align}
	Then we obtain
	\begin{align*}
		F(B_0) \leq \frac{\mu(B_0^{[3]})}{\lambda \mathscr{C}_0^3} \leq \frac{\mu(B_0)}{\lambda}.
	\end{align*}
	Applying Lemma \ref{Lm 3.6} with \(A = B_0\) and \(F = F(B_0)\), we obtain a family \(\mathcal{F}(B_0) \subset \mathfrak{B}\) such that
	\begin{align}
		\label{3.26} &F(B_0) \cap B_0^\P \cap B \ne \varnothing, \quad \forall \, B \in \mathcal{F}(B_0),  \\
		\label{3.27} &F(B_0) \cap B_0^\P \subset \bigcup_{B \in \mathcal{F}(B_0)} B \; a.s.,  \\
		\label{3.28} &\mu\left(\bigcup_{B \in \mathcal{F}(B_0)} B^\P\right) \leq 3\lambda^{-1}\mathscr{C}_0^2 \mu(B_0),
	\end{align}
	and for each \(B \in \mathcal{F}(B_0)\) one can find a ball \(\widetilde{B} \in \mathfrak{B}\) such that  
	\begin{align}\label{3.29}
		\widetilde{B} \not\subset F(B_0), \quad B^{[2]} \subset \widetilde{B} \subset B_0^\P, \quad \text{and} \quad \Delta_T(B^{[2]},\widetilde{B}) \lesssim \mathscr{C}(T).
	\end{align}
	We note that (\ref{3.27}) implies  \(\mu\left(F(B_0) \cap B_0^\P \setminus \bigcup_{B \in \mathcal{F}(B_0)} B\right) = 0\).
	This shows that
	\begin{align*}
		F(B_0) \cap B_0^\P \setminus \bigcup_{B \in \mathcal{F}(B_0)} B = F(B_0) \cap \left(B_0^\P \setminus \bigcup_{B \in \mathcal{F}(B_0)} B\right) = \varnothing.
	\end{align*}
	Therefore, for almost every \(x \in B_0^\P \setminus \bigcup_{B \in \mathcal{F}(B_0)} B\), we deduce from (\ref{3.23}) and (\ref{3.25}) that
	\begin{align*}
		\Upsilon(f \, \mathbb{I}_{B_0^{[3]}})(x) \lesssim  \mathscr{C} (T)\Psi^{-1}\left( \lambda \psi \left(\mu(B_0) \right) \right) \|f\|_{\Phi,\phi,B_0^{[3]}}.
	\end{align*}
	This completes the proof of part ({\it a}) for the case \(A = B_0\).
	
	Next, we demonstrate that part ({\it b}) holds. The second estimate in (\ref{3.29}) implies that part ({\it b}1) is satisfied for \(A = B_0\). The first estimate in (\ref{3.29}) indicates that there exists a point \(\xi \in \widetilde{B} \setminus F(B_0)\), and consequently,
	\begin{align}\label{3.30}
		\Upsilon(f \, \mathbb{I}_{B_0^{[3]}})(\xi) \lesssim  \mathscr{C} (T)\Psi^{-1}\left( \lambda \psi \left(\mu(B_0) \right) \right) \|f\|_{\Phi,\phi,B_0^{[3]}}.
	\end{align}
	This shows that part ({\it b}2) holds for \(A = B_0\). Furthermore, using (\ref{3.29}) and (\ref{3.30}), we obtain for any \(x \in B^{[2]}\) that
	\begin{align*}
		\|\mathscr{T}(f\,\mathbb{I}_{\widetilde{B}^ \P}) & (x) - \mathscr{T} (f\,\mathbb{I}_{B^{[3]}})(x)\|_{\mathbb{B}} = \|\mathscr{T}(f \, \mathbb{I}_{B_0^{[3]} \cap \widetilde{B}^\P})(x) - \mathscr{T}(f \, \mathbb{I}_{B_0^{[3]} \cap B^{[3]}})(x)\|_{\mathbb{B}}  \\
		&\leq \Delta_T(B^{[2]},\widetilde{B}) \|f \, \mathbb{I}_{B_0^{[3]}}\|_{\Phi,\phi,\widetilde{B}^\P} \leq \mathscr{C}(T) \mathcal{M}_{\mathfrak{B}, \Phi, \phi} (f \, \mathbb{I}_{B_0^{[3]}})  \\
		&\leq \Upsilon(f \, \mathbb{I}_{B_0^{[3]}})(\xi) \lesssim  \mathscr{C} (T)\Psi^{-1}\left( \lambda \psi \left(\mu(B_0) \right) \right) \|f\|_{\Phi,\phi,B_0^{[3]}}.
	\end{align*}
	This implies that part ({\it b}3) is also satisfied for \(A = B_0\).
	
	Finally, we show that such a family \(\mathcal{G} \subset \mathfrak{B}\) exists. Replacing the ball \(B_0 \in \mathcal{F}(B_0)\) in the arguments above with an arbitrary \(A \in \mathcal{F}(B_0)\), and repeating the procedures for part ({\it a}) and part ({\it b}), we obtain a family \(\mathcal{F}(A)\) that satisfies part ({\it a}1) through part ({\it b}3). We formalize this iterative process by defining
	\begin{align}\label{3.31}
		\mathcal{F}_0(B_0) := \{B_0\}, \quad \mathcal{F}_1(B_0) := \mathcal{F}(B_0), \quad \text{and} \quad \mathcal{F}_{k+1}(B_0) := \bigcup_{B \in \mathcal{F}_k(B_0)} \mathcal{F}(B), \;\; k \geq 1.
	\end{align}
	We now set
	\begin{align}\label{3.32}
		\mathcal{G} = \mathcal{G}(B_0) := \bigcup_{k \geq 0} \mathcal{F}_k (B_0).
	\end{align}
	Given that every family \(\mathcal{F}_k (B_0)\) (where \(k \geq 1\)) satisfies part ({\it a}1) through part ({\it b}3), we conclude that the resulting family \(\mathcal{G} = \mathcal{G}(B_0) \subset \mathfrak{B}\) also satisfies parts ({\it a}1) through ({\it b}3). Hence, the lemma is proved.
\end{proof}

For the convenience of subsequent exposition, we introduce the following notation:
\begin{itemize*}
	\item The set \(\mathcal{F}(A)\) in Lemma \ref{Lm 3.8} denotes the stopping time balls associated with \(A\).
	
	\item The family \(\mathcal{F}_k(A)\) in (\ref{3.31}) denotes the collection of the \(k\)-th generation stopping time balls associated with \(A\).
	
	\item The set \(\mathcal{G}\) in (\ref{3.32}) denotes the collection of all generation balls.
\end{itemize*}
Given \(A, B \in \mathcal{G}\) and \(k \geq 0\), if \(B \in \mathcal{F}_k(A)\), we write \(\mathcal{F}^k(B) = A\). In which case we say that \(A\) is the \(k\)-th generation ancestor of \(B\), or that \(B\) is one of the \(k\)-th generation descendants of \(A\). Repeated application of Lemma \ref{Lm 3.8} part ({\it a}2) yields
\begin{align}\label{3.33}
	\mu\left(\bigcup_{B \in \mathcal{F}_k(A)}B \right) \leq (3 \lambda^{-1} \mathscr{C}_0^2)^k \mu(A), \quad \text{for all} \; k \geq 1.
\end{align}

\begin{lemma}\label{Lm 3.9}
	Let \((X,\,\mathfrak{M},\,\mu)\) be a measure space endowed with a ball-basis \(\mathfrak{B}\) and \(\lambda > 3\mathscr{C}_0^6\). Fix \(B_0 \in \mathfrak{B}\) satisfying (\ref{3.24}). Then the following properties hold:
	\begin{itemize}[leftmargin=1.5em]
		\item[(1)] When \(\lambda\) is sufficiently large, there exist two types of sparse families within the collection \(\mathcal{G}\) from Lemma \ref{Lm 3.8}, namely: a \(2^{-1}\)-sparse family and a \(2^{-1}\mathscr{C}_0^{-3}\)-sparse family.
		
		\item[(2)] For any ball \(B\) belonging to the \(2^{-1}\)-sparse family, there exists a sequence \(\{B_j\}_{j=0}^k\) that is a subset of the \(2^{-1}\)-sparse family, such that \(B_{j+1} \in \mathcal{F}(B_j)\) for \(j=0,1,...,k-1\) and \(B_k = B\). There exist balls \(\widetilde{B}_j\) and points \(\xi_j \in \widetilde{B}_{j+1}\) such that the following estimates hold:
		\begin{align}
			\label{3.34} &B_{j+1}^{[2]} \subset \widetilde{B}_{j+1} \subset B_j^\P,  \\
			\label{3.35} &\Upsilon(f \, \mathbb{I}_{B_j^{[3]}})(\xi_j) \lesssim \mathscr{C}(T) \Psi^{-1}(\lambda \phi(\mu(B_j))) \|f\|_{\Phi, \phi, B_j^{[3]}},  \\
			\label{3.36} &\left\|\mathscr{T}(f\,\mathbb{I}_{\widetilde{B}_{j+1}^ \P})(x) - \mathscr{T}(f\, \mathbb{I}_{B_{j+1}^{[3]}})(x)\right \|_{\mathbb{B}} \lesssim \mathscr{C}(T) \Psi^{-1}(\lambda \phi(\mu(B_j))) \|f\|_{ \Phi,\phi,B_j^{[3]}}, \;\; \text{for all} \; x \in B_0.
		\end{align}
	\end{itemize}
\end{lemma}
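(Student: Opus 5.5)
The plan follows the Karagulyan/Cao scheme: split the stopping family $\mathcal{G}=\bigcup_{k\ge0}\mathcal{F}_k(B_0)$ of Lemma \ref{Lm 3.8} into two families, one sparse through the generation structure (even and odd generations, or equivalently balls whose hulls are not swallowed by later generations) and one sparse after passing to hulls. For $A\in\mathcal{F}_k(B_0)$ we set
\[
E_A:=A\setminus\bigcup_{B\in\mathcal{F}(A)}B .
\]
By Lemma \ref{Lm 3.8} (a2), $\mu\bigl(\bigcup_{B\in\mathcal{F}(A)}B\bigr)\le 3\lambda^{-1}\mathscr{C}_0^2\mu(A)$. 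Since $\lambda>3\mathscr{C}_0^6\ge 6\mathscr{C}_0^2$ once $\mathscr{C}_0$ is not too small (otherwise we enlarge $\lambda$, which is why the statement says ``sufficiently large''), we get $\mu(E_A)\ge\frac12\mu(A)$.

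The main obstacle is disjointness of the sets $E_A$. Within one generation the balls of $\mathcal{F}_k(B_0)$ need not be disjoint, and descendants of $A$ need not lie inside $A$; condition (a1) only gives $A^\P\cap B\ne\varnothing$, and (b1) gives $B^{[2]}\subset A^\P$. To handle this, we first use Lemma \ref{Lm more} part (1) to replace each $\mathcal{F}(A)$ by a pairwise disjoint subfamily whose hulls still cover it. We then show that the resulting sets $E_A$ across generations are disjoint up to null sets: if $x\in E_A\cap E_{A'}$ with $A'$ a later-generation ball, then $A'$ lies inside the $\P$-hull of some ball of $\mathcal{F}(A)$, contradicting $x\notin\bigcup_{B\in\mathcal{F}(A)}B$ unless the point lies in the hull annulus. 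The balls for which this annulus issue occurs are the ones collected into the second family, with sets taken inside $A^{[3]}\supset A^\P$. For these the density is $\mu(E)\ge\frac12\mu(A)\ge\frac12\mathscr{C}_0^{-3}\mu(A^{[3]})$, which yields the $2^{-1}\mathscr{C}_0^{-3}$-sparse family. The separation of the two families relies on the measure decay (\ref{3.33}) with ratio $3\lambda^{-1}\mathscr{C}_0^2<\mathscr{C}_0^{-4}$, made possible by $\lambda>3\mathscr{C}_0^6$. This choice absorbs the factors $\mathscr{C}_0^{3}$ coming from $\mu(B^{[3]})\le\mathscr{C}_0^3\mu(B)$ when comparing a child's triple hull with its parent.

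For part (2), given $B$ in the $2^{-1}$-sparse family with $B\in\mathcal{F}_k(B_0)$, we trace its ancestry $B_k=B$, $B_{j}=\mathcal{F}^1(B_{j+1})$, down to $B_0$. By construction each $B_j$ belongs to the same family, and $B_{j+1}\in\mathcal{F}(B_j)$. Applying Lemma \ref{Lm 3.8} (b) with $A=B_j$ and the ball $B_{j+1}\in\mathcal{F}(A)$ produces $\widetilde{B}_{j+1}$ and $\xi_j\in\widetilde{B}_{j+1}$. Then (b1) gives (\ref{3.34}), (b2) gives (\ref{3.35}), and (b3) gives (\ref{3.36}) for $x\in B_{j+1}^{[2]}$.

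To extend (\ref{3.36}) to the region actually used in the later telescoping (written as $x\in B_0$ in the statement, but needed on $B_{k}\subset B_{j+1}^{[2]}$), we first note that the chain is nested in the hull sense: $B_{j+1}^{[2]}\subset B_j^\P$ iterates to $B_k\subset B_{j+1}^{[2]}$ for all $j<k$. Where genuinely all of $B_0$ is needed, we combine (b3) with the estimate $\Delta_T(B^{[2]},\widetilde B)\lesssim\mathscr{C}(T)$ from (\ref{3.29}) and with the bound on $\mathcal{M}_{\mathfrak{B},\Phi,\phi}(f\mathbb{I}_{B_j^{[3]}})$ via $\Upsilon(\xi_j)$, exactly as in the proof of (b3). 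The constants stay uniform in $j$ because all estimates come from Lemma \ref{Lm 3.8}, whose implicit constants are independent of $A$.
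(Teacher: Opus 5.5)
\textbf{Gap: disjointness of the sparse sets in part (1).} Your sets $E_A=A\setminus\bigcup_{B\in\mathcal{F}(A)}B$ follow the generation tree, but in a ball-basis that tree is not nested. Balls of one generation $\mathcal{F}_k(B_0)$ can overlap, and a child only satisfies $B^{[2]}\subset A^\P$ by (b1), not $B\subset A$.

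\begin{itemize}
\item Thinning each $\mathcal{F}(A)$ to a disjoint subfamily with Lemma~\ref{Lm more}(1) separates siblings only. Children and later descendants of a different ball $A''$ of the same generation can still cut into $E_A$.
\item Your step ``$x\in E_A\cap E_{A'}$ with $A'$ of later generation forces $A'\subset G^\P$ for some $G\in\mathcal{F}(A)$'' holds only when $A'$ descends from $A$. Even then it leaves $G^\P\setminus G$, which has positive measure.
\item Moving ``the balls for which this annulus issue occurs'' into a second family does not define a family. Lying in $G^\P\setminus G$ is a property of points, and one ball can meet both $G$ and $G^\P\setminus G$.
\item No pairwise disjoint sets are ever produced for that second family. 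Replacing $A$ by $A^{[3]}$ changes the density constant, not disjointness.
\end{itemize}

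So neither family is shown to be sparse. The claim in (2) that all ancestors stay in the $2^{-1}$-sparse family also has no basis once balls are reassigned this way.

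\textbf{Missing idea: index by measure scale, not by generation.} Set $\mathscr{R}=\mathscr{C}_0^2$ and $r(B)=[\log_{\mathscr{R}}\mu(B)]$. Your computation $3\lambda^{-1}\mathscr{C}_0^2<\mathscr{C}_0^{-4}$ says exactly that every child lies at least two scales below its parent. The paper proceeds as follows.
\begin{enumerate}
\item Discard the scale-jumping balls $\mathfrak{B}_R$ of (\ref{3.37}) together with their descendants.
\item Apply Lemma~\ref{Lm more}(1) to each scale class $\mathcal{H}^k$. This gives a pairwise disjoint $\mathcal{E}^k$ whose hulls cover $\mathcal{H}^k$.
\item Split $\mathcal{E}=\bigcup_k\mathcal{E}^k$ by the parity of $r$.
\item Take $E_Q=Q\setminus\bigcup\{H\in\mathcal{E}: H\cap Q\neq\varnothing,\ r(H)\le r(Q)-2\}$.
\end{enumerate}

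Disjointness is then automatic. Two intersecting balls of one parity class have different scales, because each $\mathcal{E}^k$ is disjoint, so their scales differ by at least $2$. The smaller ball was therefore removed from the larger one's set.

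The density bound comes from tracing every such $H$ to one of boundedly many ancestors $P$ with $P^{[2]}\cap Q\neq\varnothing$ and $|r(P)-r(Q)|\le 2$ (Lemma~\ref{Lm more}(4)). Summing the decay (\ref{3.33}) over these ancestors finishes it, and this is where ``$\lambda$ sufficiently large'' is used.

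The $2^{-1}\mathscr{C}_0^{-3}$-sparse families are not a separate set of balls. They are simply $\{B^{[3]}:B\in\mathcal{E}_i\}$ with the same sets $E_B$. This is exactly what Theorem~\ref{main} needs when it sums $\|f\|_{\Phi,\phi,B_j^{[3]}}\mathbb{I}_{B_j^{[3]}}$ along the chain.

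\textbf{Part (2).} Your argument agrees with the paper: the ancestry chain, (b1)--(b3), and the observation that (\ref{3.36}) is only needed on $B_k^\P\subset B_{j+1}^{[2]}$. Drop the suggestion of extending (\ref{3.36}) to all of $B_0$ via $\Delta_T$, since $\Delta_T(B^{[2]},\widetilde B)$ only controls points of $B^{[2]}$.
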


\begin{proof}
	The collection \(\mathcal{G}\) from Lemma \ref{Lm 3.8} implies that parts ({\it a}1) through ({\it b}3) hold. We shall prove part (1) by partitioning a certain subset of \(\mathcal{G}\) in the following way.
	
	Let \(B \in \mathfrak{B}\) and \(\mathscr{R} = \mathscr{C}_0^2\). We denote \(r(B) = [\log_\mathscr{R}\mu(B)]\), where \([x]\) represents the greatest integer not exceeding \(x\). We then define the following collections:
	\begin{align*}
		\mathcal{G}^{k_0} := \{B_0\}, \; \mathcal{G}^k := \{B \in \mathcal{G}: r(B) = k\} = \{B \in \mathcal{G}: \mathscr{R}^k \leq \mu(B) < \mathscr{R}^{k+1}\}, \; k < k_0 = r(B_0).
	\end{align*}
	We shall now search for sparse families within \(\bigcup_{k \leq k_0}\mathcal{G}^k \subset \mathcal{G}\). First, we fix a ball \(B' \in \mathcal{G}\). We need to remove certain balls from \(\mathcal{G}\). The set of removed balls is denoted by \(\mathfrak{B}_{R} \subset \mathcal{G}\). Here, \(\mathfrak{B}_R\) consists of those balls \(B \in \mathcal{G}\) that satisfy
	\begin{align}\label{3.37}
		B^{[2]} \cap B' \ne \varnothing \quad \text{and} \quad r(B) 
		+ 2 \leq r(B') \leq r(\mathcal{F}^1(B)) - 2.
	\end{align}
	Subsequently, we set
	\begin{align*}
		\mathcal{H}^{k_0} := \mathcal{G}^{k_0} =\{B_0\} \quad \text{and} \quad \mathcal{H}^k := \mathcal{G}^k \setminus \bigcup_{B \in \mathfrak{B}_R} \mathcal{G}(B), \quad \text{for} \; k < k_0.
	\end{align*}
	Note that by Lemma \ref{Lm 3.8} part ({\it b}1), it is evident that \(\bigcup_{B \in \mathcal{H}^k}B \subset B_0^\P\) for all \(k \leq k_0\). According to Lemma \ref{Lm 3.1} part (a), there exists a pairwise disjoint subfamily \(\mathcal{E}^k \subset \mathcal{H}^k\) such that
	\begin{align}\label{3.38}
		\bigcup_{B \in \mathcal{H}^k}B \subset \bigcup_{B \in \mathcal{E}^k} B^\P.
	\end{align}
	We then denote
	\begin{align}\label{3.39}
		\mathcal{E} := \bigcup_{k \leq k_0}\mathcal{E}^k \subset \bigcup_{k \leq k_0}\mathcal{H}^k \subset \bigcup_{k \leq k_0}\mathcal{G}^k \subset \mathcal{G}.
	\end{align}
	The collection \(\mathcal{E}\) can be partitioned into
	\begin{align}\label{3.40}
		\mathcal{E}_1 := \{B \in \mathcal{E}: r(B) \text{ is odd}\} \quad \text{and} \quad \mathcal{E}_2 := \{B \in \mathcal{E}: r(B) \text{ is even}\}.
	\end{align}
	It is not difficult to see that this divides \(\mathcal{E}\) into two parts satisfying \(\mathcal{E}_1 \cap \mathcal{E}_2 = \varnothing\) and \(\mathcal{E} = \mathcal{E}_1 \cup \mathcal{E}_2\). From Lemma \ref{Lm 3.10}, we know that both \(\mathcal{E}_1\) and \(\mathcal{E}_2\) are \(1/2\)-sparse. We then construct the following sets:
	\begin{align}\label{3.41}
		\mathcal{S}_i := \{B^{[3]}: B \in \mathcal{E}_i\} \subset \mathcal{G}, \quad \text{for} \; i=1,2.
	\end{align}
	One can readily observe that both \(\mathcal{S}_1\) and \(\mathcal{S}_2\) are \(2^{-1} \mathscr{C}_0^{-3}\)-sparse. Thus, we have found two types of sparse families within \(\mathcal{G}\), which establishes the validity of part (1).
	
	Next, we prove part (2). Let \(B \in \mathcal{E}\). According to Lemma \ref{Lm 3.11}:
	\begin{align}\label{3.42}
		\Upsilon(f \, \mathbb{I}_{B^{[3]}})(x) \lesssim \mathscr{C}(T) \Psi^{-1}(\lambda \phi(\mu(B))) \|f\|_{\Phi, \phi, B^{[3]}}, \quad x \in \left(B^\P \setminus \bigcup_{G \in \mathcal{E}: r(G) < r(B)} G^\P\right) \setminus F_0,
	\end{align}
	where \(F_0\) is a set of measure zero. We also denote
	\begin{align*}
		\mu(F_1) := \mu\left(\bigcap_{k \leq k_0} \bigcup_{G \in \mathcal{E}: r(G) < k} G^\P\right) = 0.
	\end{align*}
	Fix a point \(x \in B_0 \setminus (F_0 \cup F_1)\). Then there exists a ball \(B \in \mathcal{E}\) such that \(x \in B^\P \setminus \bigcup_{G \in \mathcal{E}: r(G) < r(B)}G^\P\). Considering the construction of \(\mathcal{G}\) and the fact that \(B \in \mathcal{E} \subset \mathcal{G}\), we can find a sequence \(\{B_j\}_{j=0}^k \subset \mathcal{E}\) satisfying the following conditions:
	\begin{itemize*}
		\item \(B_{j+1} \in \mathcal{F}(B_j)\) for \(j = 0, 1, ..., k-1\),
		\item \(B_k = B\).
	\end{itemize*}
	Consequently, from Lemma \ref{Lm 3.8} parts ({\it b}1) through ({\it b}3), we obtain the existence of balls \(\widetilde{B}_j \in \mathfrak{B}\) and points \(\xi_j \in \widetilde{B}_{j+1}\) such that the following estimates hold:
	\begin{align*}
		&B_{j+1}^{[2]} \subset \widetilde{B}_{j+1} \subset B_j^\P,  \\
		&\Upsilon(f \, \mathbb{I}_{B_j^{[3]}})(\xi_j) \lesssim \mathscr{C}(T) \Psi^{-1}(\lambda \phi(\mu(B_j))) \|f\|_{\Phi, \phi, B_j^{[3]}},  \\
		&\left\|\mathscr{T}(f\,\mathbb{I}_{\widetilde{B}_{j+1}^ \P})(t) - \mathscr{T}(f\, \mathbb{I}_{B_{j+1}^{[3]}})(t)\right \|_{\mathbb{B}} \lesssim \mathscr{C}(T) \Psi^{-1}(\lambda \phi(\mu(B_j))) \|f\|_{ \Phi,\phi,B_j^{[3]}}, \;\; \text{for all} \; t \in B_{j+1}^{[2]}.
	\end{align*}
	The first two estimates above imply (\ref{3.34}) and (\ref{3.35}) respectively. We note that \(x \in B^\P = B_k^\P \subset B_{j+1}^{{2}}\). Therefore, by taking \(t = x\), we establish the validity of (\ref{3.36}). Hence, part (2) holds, which completes the proof of the lemma.
\end{proof}

\subsection{Sparse sets and further properties of \(\Psi\)-BOOs}\label{Section 3.3}

In this subsection, we aim to provide some auxiliary lemmas required for Lemma \ref{Lm 3.9}.

\begin{lemma}\label{Lm 3.10}
	When \(\lambda > 3\mathscr{C}_0^6\), the collections \(\mathcal{E}_1\) and \(\mathcal{E}_2\) defined in (\ref{3.40}) are \(1/2\)-sparse.
\end{lemma}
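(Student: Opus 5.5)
To show that $\mathcal{E}_1$ (and likewise $\mathcal{E}_2$) is $1/2$-sparse, I would attach to each $B \in \mathcal{E}_i$ the set
\[
E_B := B \setminus \bigcup_{G \in \mathcal{E}_i:\, r(G) < r(B)} G^\P .
\]
Two things must then be checked. First, the sets $E_B$ are pairwise disjoint. Second, $\mu(E_B) \geq \frac12 \mu(B)$.

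\textbf{Disjointness.} Take two balls $B, B'$ of $\mathcal{E}_i$.
\begin{itemize}
\item If $r(B) = r(B')$, both lie in the same $\mathcal{E}^k$. That family is pairwise disjoint by construction (the disjoint subfamily extracted in (\ref{3.38})), so $E_B \cap E_{B'} \subset B \cap B' = \varnothing$.
\item If $r(B') < r(B)$, then $B' \subset B'^\P$ has been removed from $E_B$, so $E_B \cap E_{B'} \subset E_B \cap B' = \varnothing$.
\end{itemize}
This part is routine.

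\textbf{Measure estimate.} It suffices to show
\[
\mu\Bigl(B \cap \bigcup_{G \in \mathcal{E}_i:\, r(G) < r(B),\, G^\P \cap B \neq \varnothing} G^\P\Bigr) \leq \tfrac12 \mu(B).
\]
Parity is used here. Within $\mathcal{E}_i$, a ball $G$ with $r(G) < r(B)$ in fact has $r(G) \leq r(B) - 2$. Hence $\mu(G) < \mathscr{R}^{r(B)-1} \leq \mathscr{R}^{-1}\mu(B)$, with $\mathscr{R} = \mathscr{C}_0^2$. In particular $\mu(G^\P) \leq \mathscr{C}_0^{-1}\mu(B) \leq 2\mu(B)$, so by the ball-hull control property $G^\P \subset B^\P$ and $G \subset B^{[2]}$.

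Now use the removal rule (\ref{3.37}) defining $\mathfrak{B}_R$. A ball $G \in \mathcal{G}$ satisfying $G^{[2]} \cap B \neq \varnothing$ and $r(G) + 2 \leq r(B) \leq r(\mathcal{F}^1(G)) - 2$, together with its whole subtree $\mathcal{G}(G)$, has been deleted from the $\mathcal{H}^k$. So a surviving $G \in \mathcal{E}_i$ with $r(G) \leq r(B) - 2$ meeting $B$ must have its parent scale $r(\mathcal{F}^1(G)) \leq r(B) + 1$. The same holds for each ancestor along its chain up to the first ancestor at scale $\geq r(B) - 1$. Iterating along the ancestry chain $\mathcal{F}^j(G)$, every such $G$ is a descendant, in some generation $j \geq 1$, of a ball $A$ with $r(A) \in \{r(B)-1, r(B), r(B)+1\}$ whose hull meets $B$.

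Two counting estimates then finish the bound.
\begin{itemize}
\item By Lemma~\ref{Lm more} part (4), applied to the disjoint families $\mathcal{E}^k$ with measures comparable to $\mu(B)$, the number of such ancestors $A$ is bounded by a constant depending only on $\mathscr{C}_0$. Here one uses $\mathscr{R}^{k} \le \mu(A) < \mathscr{R}^{k+1}$, and for balls of $\mathcal{G}$ that are not in $\mathcal{E}$ one passes through their $\mathcal{E}$-representatives via (\ref{3.38}), costing another factor $\mathscr{C}_0$.
\item By (\ref{3.33}), together with Lemma~\ref{Lm 3.8}(a2) applied to hulls, the descendants of each $A$ cover at most $\sum_{j\geq1}(3\lambda^{-1}\mathscr{C}_0^2)^j\mu(A) \lesssim 3\lambda^{-1}\mathscr{C}_0^2 \cdot \mathscr{C}_0^2 \mu(B)$.
\end{itemize}
Summing over the boundedly many ancestors gives a total of the order $\lambda^{-1}\mathscr{C}_0^{6}\mu(B)$ up to a small absolute factor. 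The hypothesis $\lambda > 3\mathscr{C}_0^6$ is exactly what should force this below $\frac12\mu(B)$, which would give $\mu(E_B) \geq \frac12\mu(B)$.

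\textbf{Main obstacle.} The hard step is the bookkeeping in the measure estimate. One must turn the removal condition (\ref{3.37}) into the statement that every small surviving ball meeting $B$ descends from a comparable-size ancestor near $B$. One must also track the powers of $\mathscr{C}_0$ so the constant closes exactly under $\lambda > 3\mathscr{C}_0^6$. If the crude count of ancestors introduces an extra constant, I would instead choose a single ancestor chain using disjointness in $\mathcal{E}^{k}$ for each parity class, so that only $\mathcal{O}(1)$ hulls contribute.
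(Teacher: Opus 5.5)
Your route is the paper's. Define $E_Q$ as $Q$ minus the lower-rank members of $\mathcal{E}_i$. Disjointness comes from the parity split plus the disjointness of each $\mathcal{E}^k$, and the measure bound comes from the removal rule (\ref{3.37}), Lemma~\ref{Lm more}(4) and (\ref{3.33}). The disjointness half is correct. Your climb up the ancestry chain is also sound, and more careful than the paper. The removal rule only bounds $r(\mathcal{F}^1(G))$ from above, so the immediate parent need not have rank comparable to $r(B)$. Ancestors of a surviving ball are not in $\mathfrak{B}_R$, and $G_j^{[2]}\subset G_{j+1}^\P$ by Lemma~\ref{Lm 3.8}(b1). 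Hence the first ancestor $A$ with $r(A)\ge r(B)-1$ satisfies $r(A)\le r(B)+1$ and $A^\P\cap B\ne\varnothing$.

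\textbf{The gap is the counting step.} All you know about $A$ is that it is a non-removed ball of $\mathcal{G}$, i.e.\ $A\in\mathcal{H}^{r(A)}$. Nothing places it in the disjoint family $\mathcal{E}^{r(A)}$. Lemma~\ref{Lm more}(4) needs pairwise disjointness, but $\mathcal{H}^k$ may contain arbitrarily many overlapping balls of comparable size whose hulls meet $B$, each with its own descendants. Passing to ``$\mathcal{E}$-representatives'' via (\ref{3.38}) does not help. That relation only gives $A\subset\bigcup_{E\in\mathcal{E}^k}E^\P$. The descendants of $A$ are produced by Lemma~\ref{Lm 3.8} applied to $A$ itself, through $F(A)$, which depends on $f\,\mathbb{I}_{A^{[3]}}$ and on $\|f\|_{\Phi,\phi,A^{[3]}}$. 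So they are not descendants of any $E$, and (\ref{3.33}) for $E$ says nothing about them. Summing (\ref{3.33}) over the relevant ancestors therefore gives no bound in terms of $\mu(B)$. You need an additional structural input, for instance:
\begin{itemize}
\item that the comparable-scale ancestors can be taken inside $\mathcal{E}$; or
\item a bounded-overlap property for each generation of $\mathcal{G}$.
\end{itemize}
The paper's written proof does not supply this either. It simply asserts $\mathcal{F}^1(Q_2)\in\mathfrak{P}_{Q_1}\subset\mathcal{E}$, and the inequality (\ref{3.48}) it derives just before cannot hold as written.

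\textbf{Your constant also does not close, even granting a bounded count.}
\begin{itemize}
\item From $r(A)\le r(B)+1$ you get $\mu(A)<\mathscr{R}^2\mu(B)=\mathscr{C}_0^4\mu(B)$, not $\mathscr{C}_0^2\mu(B)$.
\item So the descendants of a single ancestor are only bounded by $\frac{3\lambda^{-1}\mathscr{C}_0^6}{1-3\lambda^{-1}\mathscr{C}_0^2}\,\mu(B)$. This already exceeds $\frac12\mu(B)$ whenever $3\mathscr{C}_0^6<\lambda\le 6\mathscr{C}_0^6$.
\item Multiplying by the number of ancestors adds another power of $\mathscr{C}_0$ and the implicit constant of Lemma~\ref{Lm more}(4).
\end{itemize}
Hence $\lambda>3\mathscr{C}_0^6$ alone cannot yield $\frac12\mu(B)$; $\lambda$ must be large in terms of $\mathscr{C}_0$ and that constant. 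This matches the ``sufficiently large'' in Lemma~\ref{Lm 3.9}. The paper's final $\frac{\mathscr{C}_0^6}{3\mathscr{C}_0^6-1}$ is reached only after absorbing $\#\mathfrak{P}_{Q_1}$ and the factor $3\mathscr{C}_0^2$ into $\lesssim$.

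Finally, deleting the hulls $G^\P$ rather than the balls $G$ gains nothing for disjointness and costs you: (\ref{3.33}) controls unions of balls, not of their hulls. Delete the balls themselves, as the paper does.
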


\begin{proof}
	For notational convenience, and only within the proof of this lemma, we denote by \(\mathcal{E}\) either \(\mathcal{E}_1\) or \(\mathcal{E}_2\). Therefore, it suffices to show that \(\mathcal{E}\) is \(1/2\)-sparse. That is, we need to prove that for every \(Q \in \mathcal{E}\), there exists a subset \(E_Q\) of \(Q\) satisfying
	\begin{align}\label{3.43}
		\{E_Q: Q \in \mathcal{E}\} \text{ is pairwise disjoint}, \quad \text{and} \quad \mu(E_Q) \geq \frac{1}{2} \mu(Q).
	\end{align}
	
	We now proceed to establish (\ref{3.43}). For any \(Q \in \mathcal{E}\), we define
	\begin{align}\label{3.44}
		E_Q := Q \setminus \bigcup_{H \in \mathcal{E}: H \cap Q \neq \varnothing, r(H) \leq r(Q) - 2} H.
	\end{align}
	For any \(Q_1,Q_2 \in \mathcal{E}\), it necessarily follows that
	\begin{align}\label{3.45}
		E_{Q_1} \cap E_{Q_2} = \varnothing.
	\end{align}
	At this point, we fix the aforementioned \(Q_1\) and \(Q_2\), which satisfy \(Q_1 \cap Q_2 \neq \varnothing\) and \(r(Q_2) \leq r(Q_1) - 2\). Then we have \(Q_2^{[2]} \cap Q_1 \neq \varnothing\), and in combination with (\ref{3.37}) we obtain
	\begin{align}\label{3.46}
		r(Q_2) + 2 \leq r(Q_1) \leq r(\mathcal{F}^1(Q_2)) - 2.
	\end{align}
	Lemma \ref{Lm 3.8} part ({\it a}2) implies that
	\begin{align}\label{3.47}
		r(\mathcal{F}^1(Q_2)) = [\log_\mathscr{R}\mu(\mathcal{F}^1(Q_2))] \leq [\log_\mathscr{R}3 \lambda^{-1} \mathscr{C}_0^2 \mu(Q_2)] = -2 + r(Q_2).
	\end{align}
	From (\ref{3.46}) and (\ref{3.47}) we get
	\begin{align}\label{3.48}
		r(\mathcal{F}^1(Q_2)) + 2 \leq r(Q_1) \leq r(\mathcal{F}^1(Q_2)) - 2.
	\end{align}
	We denote by \(\mathfrak{P}_{Q_1}\) the collection of all sets satisfying (\ref{3.48}):
	\begin{align*}
		\mathfrak{P}_{Q_1} := \{ P \in \mathcal{E} : P^{[2]} \cap Q_1 \neq \varnothing, |r(P) - r(Q_1)| \leq 2 \}.
	\end{align*}
	Thus, \(\mathcal{F}^1(Q_2) \in \mathfrak{P}_{Q_1}\). According to Lemma \ref{Lm more} part (4), we have \(\#\mathfrak{P}_{Q_1} \lesssim 1\). Furthermore, for any \(P \in \mathfrak{P}_{Q_1}\), it holds that \(|\log_\mathscr{R} \mu(P) - \log_\mathscr{R} \mu(Q)| \leq 3\), which indicates
	\begin{align}\label{3.49}
		\mathscr{C}_0^{-6} \mu(Q) = \mathscr{R}^{-3} \mu(Q) \leq \mu(P) \leq \mathscr{R}^3 \mu(Q) = \mathscr{C}_0^6 \mu(Q).
	\end{align}
	Consequently, we obtain
	\begin{align}\label{3.50}
		\mu(Q \setminus E_Q & ) = \mu\left(\bigcup_{H \in \mathcal{E}: H \cap Q \neq \varnothing, r(H) \leq r(Q) - 2} H \right) \leq \mu\left( \bigcup_{P \in \mathfrak{P}_{Q_1}} \bigcup_{H \in \mathcal{G}(P)} H \right)  \nonumber \\
		&\leq \sum_{P \in \mathfrak{P}_{Q_1}} \sum_{k=1}^\infty \mu\left( \bigcup_{H \in \mathcal{F}_k(P)} H\right) \leq \sum_{P \in \mathfrak{P}_{Q_1}} \mu(P) \cdot \sum_{k=1}^\infty (3 \lambda^{-1} \mathscr{C}_0^2)^k  \nonumber \\
		&\lesssim \mu(P) \sum_{k=1}^\infty \lambda^{-k} \leq \frac{\mu(P) }{\lambda-1} \leq \frac{\mathscr{C}_0^6}{3\mathscr{C}_0^6-1} \mu(Q) \leq \frac{1}{2} \mu(Q).
	\end{align}
	In deriving (\ref{3.50}), we have used (\ref{3.33}) and (\ref{3.49}).
	
	Finally, we justify the validity of (\ref{3.45}). For \(Q_1,Q_2 \in \mathcal{E}\), one of the following two cases must hold:
	\begin{align*}
		Q_1 \cap Q_2 = \varnothing \quad \text{or} \quad Q_1 \cap Q_2 \ne \varnothing.
	\end{align*}
	\begin{itemize}[leftmargin=1.5em]
		\item If \(Q_1 \cap Q_2 = \varnothing\), it is evident that \(E_{Q_1} \cap E_{Q_2} = \varnothing\).
		
		\item If \(Q_1 \cap Q_2 \neq \varnothing\), since each family \(\mathcal{E}^k\) is pairwise disjoint, we must have \(r(Q_1) \ne r(Q_2)\). Then, assuming \(r(Q_2) \leq r(Q_1) - 2\), it follows from (\ref{3.44}) that \(E_{Q_1} \cap Q_2 = \varnothing\). Moreover, because \(E_{Q_2} \subset Q_2\), we conclude that \(E_{Q_1} \cap E_{Q_2} = \varnothing\).
	\end{itemize}
	Hence, (\ref{3.45}) holds. The results (\ref{3.45}) and (\ref{3.50}) together imply (\ref{3.43}). This completes the proof of this part.
\end{proof}

\begin{lemma}\label{Lm 3.11}
	For any \(B \in \mathcal{E}\), there exists a set \(F_0\) of measure zero such that the following estimate holds:
	\begin{align*}
		\Upsilon(f \, \mathbb{I}_{B^{[3]}})(x) \lesssim \mathscr{C}(T) \Psi^{-1}(\lambda \phi(\mu(B))) \|f\|_{\Phi, \phi, B^{[3]}}, \quad x \in \left(B^\P \setminus \bigcup_{G \in \mathcal{E}: r(G) < r(B)} G^\P\right) \setminus F_0.
	\end{align*}
\end{lemma}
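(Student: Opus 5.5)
We will derive Lemma \ref{Lm 3.11} from Lemma \ref{Lm 3.8} part ({\it a}3) applied with \(A=B\), taking the stopping family \(\mathcal{F}(B)\subset\mathcal{G}\). That statement gives the desired bound
\[
\Upsilon(f\,\mathbb{I}_{B^{[3]}})(x)\lesssim \mathscr{C}(T)\Psi^{-1}(\lambda\phi(\mu(B)))\|f\|_{\Phi,\phi,B^{[3]}}
\]
for a.e. \(x\in B^\P\setminus\bigcup_{G\in\mathcal{F}(B)}G\). We take \(F_0\) to be the exceptional null set from ({\it a}3). It then suffices to prove the inclusion
\[
B^\P\setminus\bigcup_{G\in\mathcal{E}:\,r(G)<r(B)}G^\P\;\subset\;B^\P\setminus\bigcup_{G\in\mathcal{F}(B)}G \quad\text{up to a null set}.
\]
Equivalently, every point of \(B^\P\) lying in some child \(G\in\mathcal{F}(B)\) must lie, up to measure zero, in \(H^\P\) for some \(H\in\mathcal{E}\) with \(r(H)<r(B)\).

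The first step is to compare scales. By Lemma \ref{Lm 3.8} part ({\it a}2), every \(G\in\mathcal{F}(B)\) satisfies \(\mu(G)\le 3\lambda^{-1}\mathscr{C}_0^2\mu(B)\). Since \(\lambda>3\mathscr{C}_0^6\), this gives \(\mu(G)<\mathscr{C}_0^{-4}\mu(B)=\mathscr{R}^{-2}\mu(B)\), and hence \(r(G)\le r(B)-2<r(B)\), exactly as in (\ref{3.47}). Thus every child of \(B\) sits at a strictly lower level. This step is routine.

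The second step is to pass from \(\mathcal{G}\) to \(\mathcal{E}\). Fix \(G\in\mathcal{F}(B)\) with \(k=r(G)\). If \(G\in\mathcal{H}^k\), then (\ref{3.38}) gives \(G\subset\bigcup_{H\in\mathcal{E}^k}H^\P\), and every such \(H\) has \(r(H)=k<r(B)\), so \(G\) is covered as required. If instead \(G\notin\mathcal{H}^k\), then \(G\in\mathcal{G}(B')\) for some removed ball \(B'\in\mathfrak{B}_R\). We then argue by induction downward along the levels \(r\). We replace \(G\) by the ancestor or removed ball responsible for its deletion, and check via (\ref{3.37}) that this ball still has level below \(r(B)\) and is itself covered by some \(H^\P\) with \(H\in\mathcal{E}\) and \(r(H)<r(B)\). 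In addition, \(G\subset H^\P\) must follow from the ball-hull control property together with the size comparisons in (\ref{3.37}). Points of \(G\) not captured after finitely many steps lie in \(\bigcap_k\bigcup_{r(G')<k}G'\), which has measure zero by (\ref{3.33}); these are absorbed into \(F_0\).

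\textbf{Main obstacle.} The hardest step is the removed case, \(G\in\mathcal{G}(B')\) with \(B'\in\mathfrak{B}_R\). Here the geometry of (\ref{3.37}), namely \(B'^{[2]}\cap\cdot\ne\varnothing\) together with the level gaps of \(2\), must produce an \(\mathcal{E}\)-ball of level below \(r(B)\) whose hull swallows \(G\). My first attempt will use \(\mu(G)\le\mu(B')\), which follows from the level gap, and \(G\cap B'^{[2]}\ne\varnothing\). The ball-hull control property then gives \(G\subset B'^{[3]}\). The remaining task is to show that \(B'\), or its surviving representative, is covered by hulls of \(\mathcal{E}\)-balls at levels below \(r(B)\). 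This will again use (\ref{3.38}) at level \(r(B')\). If the iteration of hulls causes the inclusion to lose one hull layer, I would instead settle for the weaker inclusion up to a null set, and control the discrepancy by the geometric decay (\ref{3.33}).
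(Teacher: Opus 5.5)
Your reduction to Lemma~\ref{Lm 3.8}\,({\it a}3) matches the paper ((\ref{3.51})). So do the level comparison \(r(G)\le r(B)-2\) for \(G\in\mathcal{F}(B)\) ((\ref{3.47})) and the case \(G\in\mathcal{H}^{r(G)}\) via (\ref{3.38}) (the paper's Case~1). The gap is the removed case. You leave it open, and the route you sketch would not close it. You have the geometry of (\ref{3.37}) reversed. For a removed ball \(G\) with witness \(B'\), what is given is \(G^{[2]}\cap B'\neq\varnothing\), not \(G\cap B'^{[2]}\neq\varnothing\). So your inclusion \(G\subset B'^{[3]}\) rests on a hypothesis you do not have, and it would be two hull layers too weak anyway.

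The paper's argument is one step. Since \(B\in\mathcal{E}\) was not removed, its child \(G\) cannot have been deleted as a descendant of a removed proper ancestor: that ancestor would be \(B\) or an ancestor of \(B\), and then \(B\) would be deleted too. So (\ref{3.37}) applies to \(G\) itself, with \(\mathcal{F}^1(G)=B\). It gives \(\widetilde B\) with \(G^{[2]}\cap\widetilde B\neq\varnothing\) and \(r(G)+2\le r(\widetilde B)\le r(B)-2\). The lower gap gives \(\mu(G^{[2]})\le\mathscr{C}_0^2\mu(G)=\mathscr{R}\mu(G)<\mathscr{R}^{r(G)+2}\le\mu(\widetilde B)\). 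The ball-hull control property then yields \(G\subset G^{[2]}\subset\widetilde B^\P\), losing only one layer. The upper bound \(r(\widetilde B)<r(B)\) places \(\widetilde B^\P\) inside \(H\); at this point the paper treats the witness as a member of \(\mathcal{E}\). The ingredient your sketch lacks is the upper inequality in (\ref{3.37}) together with \(\mathcal{F}^1(G)=B\). Your ``induction'' that replaces \(G\) by the ball responsible for its deletion has nothing keeping levels below \(r(B)\), since witness levels go \emph{up} (\(r(B')\ge r(G)+2\)).

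Your fallback is to accept an inclusion that loses hull layers and absorb the discrepancy into \(F_0\) via (\ref{3.33}). This does not work. A set such as \(B'^{[3]}\setminus\bigcup_{E\in\mathcal{E},\,r(E)<r(B)}E^\P\) generally has positive measure. Estimate (\ref{3.33}) only bounds the measure of the union of the \(k\)-th generation descendants of a fixed ball, so it says nothing about such hull discrepancies. It is also indexed by generation, not by level, so it does not justify your claim that \(\bigcap_k\bigcup_{r(G')<k}G'\) is null either. Because ({\it a}3) gives no information on \(\bigcup_{G\in\mathcal{F}(B)}G\), the desired estimate is simply unproved on any positive-measure part of a child not covered by lower-level \(\mathcal{E}\)-hulls. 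The covering of removed children therefore has to be a genuine inclusion, as in (\ref{3.54})--(\ref{3.55}). If you only know the witness lies in \(\mathcal{G}\) rather than \(\mathcal{E}\), your hull-layer worry is legitimate. Even then it must be settled by an inclusion argument, not by measure decay.
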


\begin{proof}		
	Observe that we fix \(B \in \mathcal{E}\). If we assert that
	\begin{align}\label{3.51}
		\bigcup_{G \in \mathcal{F}(B)} G \subset \bigcup_{G \in \mathcal{E}: r(G) < r(B)} G^\P =: H.
	\end{align}
	then it follows that \(x \in B^\P \setminus \bigcup_{G \in \mathcal{E}: r(G) < r(B)} G^\P \subset  B^\P \setminus \bigcup_{G \in \mathcal{F}(B)} G\). Applying Lemma \ref{Lm 3.8} part ({\it a}3) then completes the proof of the lemma.
	
	We now demonstrate that (\ref{3.51}) holds. This is equivalent to
	\begin{align}\label{3.52}
		G \subset H \quad \text{for all} \; G \in \mathcal{F}(B) \setminus \mathcal{E}.
	\end{align}
	Fix \(G \in \mathcal{F}(B) \setminus \mathcal{E}\). It is clear that we have \(r(G) < r(B)\). According to (\ref{3.39}), the set \(G\) must satisfy one of the following conditions, namely:
	\begin{align*}
		\text{either} \quad G \in \bigcup_{k \leq k_0}(\mathcal{H}^k \setminus \mathcal{E}^k) \quad \text{or} \quad G \in \bigcup_{k \leq k_0}(\mathcal{G}^k \setminus \mathcal{H}^k).
	\end{align*}
	\begin{itemize}[leftmargin=1.5em]
		\item {\bf {\emph Case} 1} Suppose \(G \in \bigcup_{k \leq k_0}(\mathcal{H}^k \setminus \mathcal{E}^k)\). Then there exists a unique \(k \leq k_0\) and a ball \(\widetilde{B}_k \in \mathcal{H}^k\) such that, by virtue of (\ref{3.38}), we have
		\begin{align}\label{3.53}
			G = \widetilde{B}_k \subset \bigcup_{\widetilde{B}_k \in \mathcal{H}^k} \widetilde{B}_k \subset \bigcup_{B_k \in \mathcal{E}^k} B_k^\P \subset H.
		\end{align}
		Furthermore, for each \(B_k \in \mathcal{E}^k\), it follows that \(r(B_k) = k = r(G) < r(B)\).
		
		\item {\bf {\emph Case} 2} Suppose \(G \in \bigcup_{k \leq k_0}(\mathcal{G}^k \setminus \mathcal{H}^k)\). From the construction of \(\mathcal{H}^k\), it follows that \(\bigcup_{k \leq k_0}(\mathcal{G}^k \setminus \mathcal{H}^k) = (\bigcup_{k \leq k_0}\mathcal{G}^k) \cap (\bigcup_{B \in \mathfrak{B}_R}\mathcal{G}(B))\). Using (\ref{3.37}), there exists a ball \(\widetilde{B} \in \mathcal{G} \subset \mathcal{E}\) such that \(G^{[2]} \cap \widetilde{B} \ne \varnothing\) and
		\begin{align}\label{3.54}
			r(G) + 2 \leq r(\widetilde{B}) \leq r(\mathcal{F}^1(G)) - 2 = r(B) - 2.
		\end{align}
		We note that (\ref{3.54}) implies
		\begin{align*}
			\log_\mathscr{R}\mu(G) -1 +2 \leq [\log_\mathscr{R}\mu(G)] + 2 = r(G) + 2 \leq r(\widetilde{B}) = [\log_\mathscr{R}\mu (\widetilde{B})] \leq \log_\mathscr{R}\mu(\widetilde{B}),
		\end{align*}
	\end{itemize}
	which consequently gives \(\mathscr{R}\mu(G) \leq \mu(\widetilde{B})\), a relation equivalent to \(\mu(G^{[2]}) \leq \mathscr{C}_0^2\mu(G) \leq \mu(\widetilde{B})\). Applying ball-hull control property then yields
	\begin{align}\label{3.55}
		G \subset G^{[2]} \subset \widetilde{B}^\P \subset H.
	\end{align}
	The relations (\ref{3.53}) and (\ref{3.55}) together demonstrate that (\ref{3.51}) holds, which completes the proof of the lemma.
\end{proof}

\subsection{Proof of Theorem \ref{main}}\label{Section 3.4}

In this subsection, we aim to prove Theorem \ref{main}.

\begin{proof}[Proof of Theorem \ref{main}]
	For any \(B_0 \in \mathfrak{B}\) satisfying (\ref{3.24}), by the arbitrariness of \(B_0\), inequality (\ref{ine main}) is equivalent to 
	\begin{align}\label{3.56}
		\|\mathscr{T}(f \, \mathbb{I}_{B_0^{[3]}})(x)\|_\mathbb{B} \lesssim \mathscr{C}(T) \cdot \left[\mathcal{A}_{\mathcal{S}_1, \Phi,\phi} f(x) + \mathcal{A}_{\mathcal{S}_2, \Phi,\phi} f(x)\right].
	\end{align}
	Here, both \(\mathcal{S}_1\) and \(\mathcal{S}_2\) are \(\frac{1}{2 \mathscr{C}_0^3}\)-sparse, and \(\mathcal{S}_1 \cup \mathcal{S}_2 = \mathcal{S}\).  
	
	Next, we prove (\ref{3.56}). For the above \(B_0\) and for \(\lambda > 3\mathscr{C}_0^6\), according to Lemma \ref{Lm 3.9}, we obtain the following conclusions:  
	\begin{itemize*}
		\item[(1)] There exist two \(\frac{1}{2 \mathscr{C}_0^3}\)-sparse families \(\mathcal{S}_1\) and \(\mathcal{S}_2\), which satisfy \(\mathcal{S}_1 \cup \mathcal{S}_2 = \mathcal{S}\),  
		
		\item[(2)] One can find a sequence \(\{B_j\}_{j=0}^k\) such that (\ref{3.34}) through (\ref{3.36}) hold, where \(B_k = B\) and \(j = 0, 1, ..., k-1\).  
		
	\end{itemize*}
	Then, we claim that for the above sequence \(\{B_j\} \subset \mathcal{S}\), the following inequality holds:  
	\begin{align}\label{3.57}
		\|\mathscr{T}(f \, \mathbb{I}_{B_j^{[3]}})(x) - \mathscr{T}(f \, \mathbb{I}_{B_{j+1}^{[3]}})(x)\|_\mathbb{B} \lesssim \mathscr{C}(T) \|f\|_{\Phi, \phi, B_j^{[3]}}.
	\end{align}
	Furthermore, combining (\ref{3.42}) with the condition \((\Phi, \phi) \in E_{mb}\) yields
	\begin{align}\label{3.58}
		\|\mathscr{T}(f \, \mathbb{I}_{B^{[3]}})(x)\|_\mathbb{B} \leq \Upsilon (f \, \mathbb{I}_{B^{[3]}})(x) \leq \mathscr{C}(T) \|f\|_{\Phi, \phi, B^{[3]}}.
	\end{align}
	Note that \(B_k = B\). Then, (\ref{3.57}) and (\ref{3.58}) imply
	\begin{align*}
		\|\mathscr{T}(f \, \mathbb{I}_{B_0^{[3]}})&(x)\|_\mathbb{B} = \sum_{j=0}^{k-1} \|\mathscr{T}(f \, \mathbb{I}_{B_j^{[3]}})(x) - \mathscr{T}(f \, \mathbb{I}_{B_{j+1}^{[3]}})(x)\|_\mathbb{B} + \|\mathscr{T}(f \, \mathbb{I}_{B_k^{[3]}})(x)\|_\mathbb{B} \nonumber \\
		&\lesssim \mathscr{C}(T) \sum_{j=0}^k \|f\|_{\Phi, \phi, B_j^{[3]}} \leq \mathscr{C}(T) \sum_{B \in \mathcal{S}_1 \cup \mathcal{S}_2} \|f\|_{\Phi, \phi, B^{[3]}} \mathbb{I}_{B^{[3]}} (x)  \nonumber \\
		&\leq \mathscr{C}(T) \left[\mathcal{A}_{\mathcal{S}_1, \Phi,\phi} f(x) + \mathcal{A}_{\mathcal{S}_2, \Phi,\phi} f(x)\right].
	\end{align*}
	This shows that (\ref{3.56}) holds, thereby completing the proof of the theorem.
\end{proof}

\begin{remark}
	Here we explain why (\ref{3.57}) holds. Let us define
	
	\begin{align*}
		&\mathscr{Z}_1 = \left\|\left(\mathscr{T}(f \, \mathbb{I}_{B_j^{[3]}}) - \mathscr{T}(f \, \mathbb{I}_{\widetilde{B}_{j+1}^\P})\right)(x) - \left(\mathscr{T}(f \, \mathbb{I}_{B_j^{[3]}}) - \mathscr{T}(f \, \mathbb{I}_{\widetilde{B}_{j+1}^\P})\right)(\xi_j)\right\|_\mathbb{B},  \\
		&\mathscr{Z}_2 = \left\|\mathscr{T}(f \, \mathbb{I}_{B_j^{[3]}}) (\xi_j) - \mathscr{T}(f \, \mathbb{I}_{\widetilde{B}_{j+1}^\P}) (\xi_j) \right\|_\mathbb{B},  \\
		&\mathscr{Z}_3 = \left\|\mathscr{T}(f \, \mathbb{I}_{\widetilde{B}_{j+1}^\P})(x) -  \mathscr{T}(f \, \mathbb{I}_{B_{j+1}^{[3]}})(x) \right\|_\mathbb{B}.
	\end{align*}
	Then,
	\begin{align}\label{3.59}
		\|\mathscr{T}(f \, \mathbb{I}_{B_j^{[3]}})(x) - \mathscr{T}(f \, \mathbb{I}_{B_{j+1}^{[3]}})(x)\|_\mathbb{B} \leq \mathscr{Z}_1 + \mathscr{Z}_2 + \mathscr{Z}_3.
	\end{align}
	In the following, we estimate \(\mathscr{Z}_1\), \(\mathscr{Z}_2\), and \(\mathscr{Z}_3\) separately.
	\begin{itemize*}
		\item The relation (\ref{3.34}) shows that \(\widetilde{B}_{j+1} \subset \widetilde{B}_{j+1}^\P \subset B_j^{[2]} \subset B_j^{[3]}\). Then, for \(\xi_j \in \widetilde{B}_{j+1}\), substituting \(f \, \mathbb{I}_{B_j^{[3]}}\) and \(\widetilde{B}_{j+1}\) for \(f\) and \(B\) in condition (\(\Psi\)-BOO-II), we obtain
		\begin{align}\label{3.60}
			\mathscr{Z}_1 \leq \mathscr{C}_2(T) & \, \langle \|f \, \mathbb{I}_{B_j^{[3]}}\|\rangle_{\Phi,\phi,\widetilde{B}_{j+1}} \leq \mathscr{C}(T) \mathcal{M}_{\widetilde{B}_{j+1},\Phi,\phi}(f \, \mathbb{I}_{B_j^{[3]}})(\xi_j)  \nonumber \\
			&\leq \Upsilon (f \, \mathbb{I}_{B_j^{[3]}})(\xi_j) \lesssim \mathscr{C}(T) \|f\|_{\Phi, \phi, B_j^{[3]}}.
		\end{align}
		The last estimate uses (\ref{3.35}) and the fact that \((\Phi, \phi) \in E_{mb}\).
		
		\item The relation (\ref{3.34}) also shows that \(\widetilde{B}_{j+1} \subset \widetilde{B}_{j+1}^\P \subset B_j^{[3]}\). Then, for \(\xi_j \in \widetilde{B}_{j+1}\), substituting \(f \, \mathbb{I}_{B_j^{[3]}}\) and \(\widetilde{B}_{j+1}\) for \(f\) and \(B\) in the definition of \(T^*\), we have
		\begin{align}\label{3.61}
			\mathscr{Z}_2 = \left\|\mathscr{T}(f \, \mathbb{I}_{B_j^{[3]}}) (\xi_j) - \mathscr{T}(f \, \mathbb{I}_{B_j^{[3]}} \, \mathbb{I}_{\widetilde{B}_{j+1}^\P}) (\xi_j) \right\|_\mathbb{B} \leq  T^*(f & \, \mathbb{I}_{B_j^{[3]}})(\xi_j)  \nonumber \\
			\leq \Upsilon (f \, \mathbb{I}_{B_j^{[3]}})(\xi_j) \lesssim \mathscr{C}(T) \|f\|_{\Phi, \phi, B_j^{[3]}}.&
		\end{align}
		Again, the last estimate uses (\ref{3.35}) and the fact that \((\Phi, \phi) \in E_{mb}\).
		
		\item Using (\ref{3.36}) and the fact that \((\Phi, \phi) \in E_{mb}\), we obtain
		\begin{align}\label{3.62}
			\mathscr{Z}_3 = \left\|\mathscr{T}(f \, \mathbb{I}_{\widetilde{B}_{j+1}^\P})(x) -  \mathscr{T}(f \, \mathbb{I}_{B_{j+1}^{[3]}})(x) \right\|_\mathbb{B} \lesssim \mathscr{C}(T) \|f\|_{\Phi, \phi, B_j^{[3]}}.
		\end{align}
	\end{itemize*}
	Therefore, combining (\ref{3.59}), (\ref{3.60}), (\ref{3.61}), and (\ref{3.62}) implies that (\ref{3.57}) holds.
\end{remark}

\section*{Declarations}

\backmatter

\bmhead{Acknowledgements}
The authors thank the referees for their careful reading and helpful comments which indeed improved the presentation of this article.

\bmhead{Ethical statement}
{\bf (1) Conflict of Interest:} The authors declare that they have no known competing financial interests or personal relationships that could have appeared to influence the work reported in this paper. {\bf (2) Data Availability:} No new data were generated or analyzed in support of this research. {\bf (3) Authorship and Originality:} This manuscript is the authors' original work and has not been published or submitted elsewhere. All authors have read and approved the final version and agree to be accountable for all aspects of the work.

\bmhead{Funding information}
The research was supported the National Natural Science Foundation of China (No. 12461021).

\bmhead{Data, materials, and code availability}
Our manuscript has no associated data, materials, and code.

\bmhead{Conflict of interest}
The authors declare that they have no conflict of interest.

\bmhead{Author contributions}
All authors participated in the conception and design of the study and reviewed the manuscript.

\bmhead{Consent for publication}
The authors agree to the publication.

\bmhead{Mathematics Subject Classification(2020)}
Primary 42B20; Secondary 42B25; 42B35; 46E30.

\begin{appendices}

\section{\bf Two important classes of functions}\label{Section A1}~~

{\bf (1) \(\mathscr{Y}\)-class functions}

We term a function \(\Phi : [0, +\infty] \to [0, +\infty]\) a {\sffamily Young function} provided that it meets the following conditions:
\begin{itemize}
	\item \(\Phi\) is convex.
	\item \(\Phi\) is left-continuous.
	\item \(\displaystyle \lim\limits_{t \to 0^+}\Phi(t) = \Phi(0) = 0\) and \(\displaystyle \lim\limits_{t\to +\infty} \Phi(t) = \Phi(+ \infty) = +\infty\).
\end{itemize}

Let \(\mathscr{Y}\) denote the collection of all Young functions, that is,
\begin{align}\label{Y function}
	\mathscr{Y}=\left\{\Phi:\,[0,\,+\infty]\to[0,\,+\infty]\,\big| \, 0 < \Phi(t) < +\infty, \, 0 < r < +\infty \right\}.
\end{align}

\begin{remark}
	If \(\Phi\in\mathscr{Y}\), then \(\Phi\) is absolutely continuous on any closed subinterval of \([0,\,+\infty)\) and \(\Phi:\,[0,\, +\infty) \to [0,\,+\infty)\) is a bijection.
\end{remark}

For a Young function \(\Phi\) and for \(0 \leq t \leq +\infty\), we define  
\begin{align*}
	\Phi^{-1}(t)=\inf\{s \geq 0:\,\Phi(s)>t\} \qquad (\inf \varnothing = +\infty).
\end{align*}

\begin{remark}
	Regarding the inverse function \(\Phi^{-1}\), we have the following observations.
	\begin{itemize}
		\item \(\Phi\in\mathscr{Y}\) implies that \(\Phi^{-1}\) is the inverse of \(\Phi\) in the usual sense.
		\item For every \(0 \leq t < +\infty\), the following double inequality holds:
		\[
		\Phi\left(\Phi^{-1}(t)\right) \leq t \leq \Phi^{-1}\left(\Phi(t)\right).
		\]
	\end{itemize}
\end{remark}

For a Young function \(\Phi\), one defines its {\sffamily complementary function} \(\widetilde{\Phi}\) via the formula
\begin{align*}
	\widetilde{\Phi}(s)= 
	\left\{ 
	\begin{array}{ll}
		\sup\limits_{t \in [0,\,+\infty)}\{st-\Phi(t)\}, ~~ & s \in [0,\,+\infty), \\
		+\infty,  & s=+\infty. \\
	\end{array}
	\right.
\end{align*}
It is clear that \(\widetilde{\Phi}\) is also a Young function and satisfies \(\widetilde{\widetilde{\Phi}}=\Phi\).

\begin{example}
	The following are examples concerning the complementary function \(\widetilde{\Phi}\).
	
	\begin{itemize}
		\item[\(\blacksquare\)] \(\Phi(t)=(1+t)\log(1+t)-t \; \Longrightarrow \; \widetilde{\Phi}(t)=e^t-t-1\).
		
		\vspace{6pt}
		
		\item[\(\blacksquare\)] \(\displaystyle\Phi(t)=\frac{1}{2}t^2\; \Longrightarrow\; \widetilde{\Phi}(t)=\frac{1}{2}t^2\).
		
		\vspace{6pt}
		
		\item[\(\blacksquare\)] \(\Phi(t)=\max\{0,t-1\} \; \Longrightarrow \; \widetilde{\Phi}(t)= 
		\left\{ 
		\begin{array}{ll}
			t, ~~ & 0 \leq t \leq 1, \\
			+\infty,  & t>1. \\
		\end{array}
		\right.\)
	\end{itemize}
\end{example}

{\bf (2) \(\mathscr{G}\)-class functions}

Regarding the concepts of almost increasing (abbreviated as \(a.\,ic.\)), almost decreasing (abbreviated as \(a.\,dc.\)), and doubling condition.

\begin{itemize}
	\item A function \(\varphi:\,(0,\,+\infty) \to (0,\,+\infty)\) is called {\sffamily almost increasing} (resp. {\sffamily almost decreasing}) if there exists a constant \(C_{ic} > 0\) (resp. \(C_{dc} > 0\)) such that
	\[
	\varphi(r) \leq C_{ic} \varphi(s) \qquad
	\left(\text{resp. } \varphi(r)\geq C_{dc} \varphi(s) \right) \qquad 
	\text{for } \; r \leq s.
	\]
	
	\item A function \(\varphi:\,(0,\,+\infty) \to(0,\,+\infty)\) is said to satisfy the {\sffamily doubling condition} if there exists a constant \(C>0\) such that
	\[
	C^{-1} \leq \frac{\varphi(r)}{\varphi(s)} \leq C \qquad 
	\text{for } \; \frac{1}{2} \leq \frac{r}{s} \leq 2.
	\]
	
	\item For functions \(\varphi,\,\vartheta: \,(0,\,+\infty)\to(0,\,+\infty)\) we write \(\varphi\sim\vartheta\) if there exists a constant \(C > 0\) such that
	\[
	C^{-1} \vartheta(r) \leq \varphi(r) \leq C \vartheta(r) \qquad 
	\text{for all } \; r > 0.
	\]
\end{itemize}

Let \(\mathscr{G}\) denote the set of all functions \(\varphi:\, (0,\, +\infty) \to (0,\,+\infty)\); that is,
\begin{align}\label{G function}
	\mathscr{G} = \left\{ \varphi:\,\varphi(r) \,\, a.\,dc., \, r\varphi(r) \,\, a.\,ic.\right\}.
\end{align}

\begin{remark}
	For \(\varphi \in \mathscr{G}\), we have the following assertions.
	\begin{itemize}
		\item[\(\blacksquare\)] \(\varphi\in\mathscr{G}\) implies that \(\varphi\) satisfies the doubling condition.
		
		\item[\(\blacksquare\)] If there exists \(\vartheta:\,(0,\,+\infty)\to(0,\,+\infty)\) and a function \(\varphi \in \mathscr{G}\) such that \(\varphi\sim\vartheta\), then \(\vartheta \in \mathscr{G}\).
	\end{itemize}
\end{remark}

\section{\bf Orlicz-Morrey spaces and related function spaces}\label{Section A2}~~

Let \((X,\,\mathfrak{M},\,\mu)\) be a measure space endowed with a ball-basis \(\mathfrak{B}\). Given Young functions \(\Phi\in\mathscr{Y}\), \(\phi\in\mathscr{G}\) and a ball \(B\in\mathfrak{B}\), we introduce the following three types of Luxemburg norms.

\begin{itemize}
	\item Standard Luxemburg norm: 
	\begin{align*}
		\|f\|_{\Phi} = \inf\left\{\lambda > 0 : \int_{X} \Phi\left(\frac{|f(x)|}{\lambda}\right) d\mu(x) \leq 1\right\}.
	\end{align*}
	
	\item Normalized Luxemburg norm: 
	\begin{align*}
		\|f\|_{\Phi,\,B} = \inf\left\{\lambda > 0 : \frac{1}{\mu(B)} \int_{B} \Phi\left(\frac{|f(x)|}{\lambda}\right) d\mu(x) \leq 1\right\}.
	\end{align*}
	
	\item Generalized Luxemburg norm: 
	\begin{align*}
		\|f\|_{\Phi,\phi,B} = \inf\left\{\lambda > 0 : \frac{1}{\mu(B)\phi(\mu(B))} \int_{B} \Phi\left(\frac{|f(x)|}{\lambda}\right) d\mu(x) \leq 1\right\}.
	\end{align*}
\end{itemize}

\begin{example}
	The following are several examples:
	\begin{itemize}
		\item \(\Phi(t)=t^p\,(p>1)\,\Longrightarrow\,\|f\|_{\Phi}=\|f\|_{L^p},\,\|f\|_{\Phi,\,B}=\|f\|_{L^p,B}\). 
		\begin{itemize}
			\item \(\phi(t)=t^{-1}\,\Longrightarrow\,\|f\|_{\Phi,\phi,B}=\|f\|_{L^p(B)}\). 
			\item \(\phi(t)=1\,\Longrightarrow\,\|f\|_{\Phi,\phi,B}=\|f\|_{L^p,B}\). 
		\end{itemize}
		
		\item \(\Phi(t)=t\log^r(e+t)\,(r>0),\,\phi(t)=1\,\Longrightarrow\,\|f\|_{\Phi}=\|f\|_{L(\log L)^r},\,\|f\|_{\Phi,B}=\|f\|_{\Phi,\phi,B}=\|f\|_{L(\log L)^r,B}\). 
		
		\item \(\Phi(t)=e^{t^r}-1\,(r>0),\,\phi(t)=1\,\Longrightarrow\,\|f\|_{\Phi}=\|f\|_{\exp L^r},\,\|f\|_{\Phi,B}=\|f\|_{\Phi,\phi,B}=\|f\|_{\exp L^r,B}\). 
	\end{itemize}
\end{example}

Generalized Luxemburg norm possesses the following property.
\begin{prop}\label{propA}
	Let \((X,\,\mathfrak{M},\,\mu)\) be a measure space endowed with a ball-basis \(\mathfrak{B}\). Assume that for a given Young function \(\Phi \in \mathscr{Y}\), a function \(\phi\in\mathscr{G}\), and balls \(A,\,B\in\mathfrak{B}\), we have \(A \cap B \ne \varnothing\) and \(\mu(B) \leq C_{A,B}\mu(A)\) for some constant \(C_{A,B} \geq 1\). Then we have
	\begin{align*}
		\|f\cdot\mathbb{I}_B\|_{\Phi,\phi,A} \leq \max\{1,C^{-1}_{dc}\}\max\{1,C_{ic}\}C_{A,B}\|f\|_{\Phi,\phi,B}.
	\end{align*}
\end{prop}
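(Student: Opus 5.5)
The bound is a comparison of Luxemburg-type infima, so I will work directly from the definition of \(\|\cdot\|_{\Phi,\phi,A}\). Put \(t:=\|f\|_{\Phi,\phi,B}\) and \(K:=\max\{1,C_{dc}^{-1}\}\max\{1,C_{ic}\}C_{A,B}\). It suffices to show that \(\lambda=Kt\) is admissible in the infimum defining \(\|f\mathbb{I}_B\|_{\Phi,\phi,A}\), that is,
\[
\frac{1}{\mu(A)\phi(\mu(A))}\int_{A\cap B}\Phi\Bigl(\frac{|f|}{Kt}\Bigr)d\mu\le 1 .
\]
If \(t=0\), then \(f=0\) a.e. on \(B\) and both sides vanish, so assume \(0<t<\infty\). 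I will not use the hypothesis \(A\cap B\neq\varnothing\); it only guarantees that the left side is not trivially zero.

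\textbf{Step 1 (convexity).} Since \(K\ge 1\) and \(\Phi\) is convex with \(\Phi(0)=0\), we have \(\Phi(s/K)\le K^{-1}\Phi(s)\). Enlarging the domain from \(A\cap B\) to \(B\) and using the definition of \(t\) (together with left-continuity, so that the infimum is attained) gives
\[
\int_{A\cap B}\Phi\Bigl(\frac{|f|}{Kt}\Bigr)d\mu\le \frac1K\int_B\Phi\Bigl(\frac{|f|}{t}\Bigr)d\mu\le \frac1K\,\mu(B)\phi(\mu(B)).
\]

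\textbf{Step 2 (compare normalizations).} It remains to prove
\[
\mu(B)\phi(\mu(B))\le K\,\mu(A)\phi(\mu(A)).
\]
This is the only point where the \(\mathscr{G}\) structure enters, and I split into two cases.

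\emph{Case \(\mu(B)\le\mu(A)\).} Since \(r\phi(r)\) is almost increasing, \(\mu(B)\phi(\mu(B))\le C_{ic}\,\mu(A)\phi(\mu(A))\le K\mu(A)\phi(\mu(A))\).

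\emph{Case \(\mu(A)<\mu(B)\le C_{A,B}\mu(A)\).} Since \(\phi\) is almost decreasing, \(\phi(\mu(B))\le C_{dc}^{-1}\phi(\mu(A))\). Combining this with \(\mu(B)\le C_{A,B}\mu(A)\) gives \(\mu(B)\phi(\mu(B))\le C_{dc}^{-1}C_{A,B}\,\mu(A)\phi(\mu(A))\le K\mu(A)\phi(\mu(A))\).

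Inserting Step 2 into Step 1 yields the admissibility of \(\lambda=Kt\), and hence the claimed inequality.

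\textbf{Main obstacle.} The only delicate point is reading the almost-monotonicity conventions correctly. As stated, "\(\varphi(r)\ge C_{dc}\varphi(s)\) for \(r\le s\)" gives \(\phi(\text{large})\le C_{dc}^{-1}\phi(\text{small})\), and this is exactly why the constant \(\max\{1,C_{dc}^{-1}\}\) appears. Taking maxima with \(1\) lets a single constant \(K\) cover both cases.
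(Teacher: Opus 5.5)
Your proof is correct and follows essentially the same route as the paper's: you scale by the constant using convexity of \(\Phi\), then compare \(\mu(B)\phi(\mu(B))\) with \(\mu(A)\phi(\mu(A))\) in two cases. The case \(\mu(B)\le\mu(A)\) uses the almost increase of \(r\phi(r)\), and the case \(\mu(A)<\mu(B)\) uses the almost decrease of \(\phi\) together with \(\mu(B)\le C_{A,B}\mu(A)\). Your extra remarks are accurate refinements of details the paper leaves implicit: the degenerate case \(t=0\), attainment of the infimum via left-continuity and monotone convergence, and the observation that \(A\cap B\neq\varnothing\) is never used.
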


\begin{proof}
	Let \(K=\|f\|_{\Phi,\phi,B}\). By the definition of the generalized Luxemburg norm
	\begin{align*}
		\int_{B} \Phi\left(\frac{|f(x)|}{K}\right) d\mu(x) \leq \mu(B)\phi(\mu(B)).
	\end{align*}
	Since \(\Phi\) is a Young function and we set \(C = \max\{1,C^{-1}_{dc}\}\max\{1,C_{ic}\}C_{A,B} \geq 1\). We next consider two cases:
	\begin{itemize}[leftmargin=1.5em]
		\item If \(\mu(A)\leq\mu(B)\), then \(\phi(\mu(A))\geq C_{dc} \phi(\mu(B))\). Consequently,
		\begin{align*}
			\frac{1}{\mu(A)\phi(\mu(A))}&\int_{A} \Phi\left(\frac{\left|f\cdot\mathbb{I}_B(x)\right|}{CK}\right) d\mu(x) \leq \frac{1}{\mu(A)\phi(\mu(A))}\cdot\frac{1}{C}\int_{A \cap B} \Phi\left(\frac{|f(x)|}{K}\right) d\mu(x)\\
			&\leq\frac{\mu(B)\phi(\mu(B))}{C\mu(A) \phi(\mu(A))} \leq \frac{\mu(B)}{\mu(A)C_{A,B}}\frac{1}{C_{dc}} \frac{1}{\max\{1,C^{-1}_{dc}\}} \leq 1.
		\end{align*}
		
		\item If \(\mu(B)\leq\mu(A)\), then \(\mu(B)\phi(\mu(B))\leq C_{ic} \mu(A)\phi(\mu(A))\). Hence,
		\begin{align*}
			\frac{1}{\mu(A)\phi(\mu(A))}&\int_{A} \Phi\left(\frac{\left|f\cdot\mathbb{I}_B(x)\right|}{CK}\right) d\mu(x) \leq \frac{1}{\mu(A)\phi(\mu(A))}\cdot\frac{1}{C}\int_{A \cap B} \Phi\left(\frac{|f(x)|}{K}\right) d\mu(x)\\
			&\leq\frac{\mu(B)\phi(\mu(B))}{C\mu(A) \phi(\mu(A))} \leq \frac{C_{ic}}{\max\{1,C_{ic}\}} \leq 1.
		\end{align*}
	\end{itemize}
	Therefore, we obtain \(\|f\cdot\mathbb{I}_B\|_{\Phi,\phi,A}\leq C\|f\|_{\Phi,\phi,B}\).
\end{proof}

\begin{prop}\label{propB}
	Let \((X,\,\mathfrak{M},\,\mu)\) be a measure space endowed with a ball-basis \(\mathfrak{B}\). For a Young function \(\Phi \in \mathscr{Y}\) and a function \(\phi\in\mathscr{G}\), suppose that balls \(A,\,B\in\mathfrak{B}\) satisfy \(A \subset B\). Then
	\begin{align*}
		\|f\|_{\Phi,\phi,A} \leq \frac{\max\{1,C_{ic}\} \mu(B)\phi(\mu(B))}{\mu(A)\phi(\mu(A))} \|f\|_{\Phi,\phi,B}.
	\end{align*}
\end{prop}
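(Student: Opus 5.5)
The plan is a direct comparison of the Luxemburg functionals on \(A\) and on \(B\); no structure of the ball-basis beyond \(0<\mu(A)\le\mu(B)<\infty\) is needed. Set \(K:=\|f\|_{\Phi,\phi,B}\). If \(K=0\) or \(K=\infty\) there is nothing to prove, so assume \(0<K<\infty\). By left-continuity of \(\Phi\) and monotone convergence, the infimum in the definition of the generalized Luxemburg norm is attained, so
\begin{align*}
\int_B \Phi\left(\frac{|f|}{K}\right)d\mu \leq \mu(B)\phi(\mu(B)).
\end{align*}
Put
\begin{align*}
C:=\frac{\max\{1,C_{ic}\}\,\mu(B)\phi(\mu(B))}{\mu(A)\phi(\mu(A))}.
\end{align*}
The goal is to show that \(\lambda=CK\) is admissible in the definition of \(\|f\|_{\Phi,\phi,A}\).

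The first step is to check that \(C\ge 1\). Since \(\mu(A)\le\mu(B)\) and \(r\phi(r)\) is almost increasing, we have \(\mu(A)\phi(\mu(A))\le C_{ic}\,\mu(B)\phi(\mu(B))\). Hence \(C\ge \max\{1,C_{ic}\}/C_{ic}\ge 1\).

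The second step uses convexity of \(\Phi\) together with \(\Phi(0)=0\), which give \(\Phi(t/C)\le \Phi(t)/C\) for \(C\ge1\). Since \(A\subset B\) and \(\Phi\ge 0\),
\begin{align*}
\frac{1}{\mu(A)\phi(\mu(A))}\int_A \Phi\left(\frac{|f|}{CK}\right)d\mu
\leq \frac{1}{C\,\mu(A)\phi(\mu(A))}\int_B \Phi\left(\frac{|f|}{K}\right)d\mu
\leq \frac{\mu(B)\phi(\mu(B))}{C\,\mu(A)\phi(\mu(A))}=\frac{1}{\max\{1,C_{ic}\}}\leq 1.
\end{align*}
Therefore \(\|f\|_{\Phi,\phi,A}\le CK\), which is the claim.

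The only delicate point is the inequality \(C\ge1\), which is needed to invoke the convexity scaling \(\Phi(t/C)\le\Phi(t)/C\). This is exactly where the factor \(\max\{1,C_{ic}\}\) and the almost-increasing property of \(r\phi(r)\) in the definition of \(\mathscr{G}\) enter. Attainment of the infimum is a routine technicality; alternatively one can run the same argument with \(K+\varepsilon\) in place of \(K\) and let \(\varepsilon\to0\).
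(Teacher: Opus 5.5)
Your proposal is correct and follows the paper's proof essentially line by line: you use the same constant \(C\), the same check \(C\ge 1\) via the almost-increasing property of \(r\phi(r)\), and the same convexity-scaling estimate using \(A\subset B\). Beyond that, you make explicit two details the paper leaves implicit: that the infimum defining \(\|f\|_{\Phi,\phi,B}\) is attained, and the degenerate cases \(K\in\{0,\infty\}\).
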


\begin{proof}
	Set \(K=\|f\|_{\Phi,\phi,B}\). From the definition of the generalized Luxemburg norm
	\begin{align*}
		\int_{B} \Phi\left(\frac{|f(x)|}{K}\right) d\mu(x) \leq \mu(B)\phi(\mu(B)).
	\end{align*}
	The inclusion \(A \subset B\) implies \(\mu(A)\leq\mu(B)\), and consequently
	\begin{align*}
		\mu(A)\phi(\mu(A)) \leq C_{ic} \mu(B)\phi(\mu(B)\leq \max\{1,C_{ic}\} \mu(B)\phi(\mu(B)).
	\end{align*}
	Define
	\begin{align*}
		C=\frac{\max\{1,C_{ic}\} \mu(B)\phi(\mu(B))}{\mu(A)\phi(\mu(A))} \geq 1.
	\end{align*}
	It follows that
	\begin{align*}
		\frac{1}{\mu(A)\phi(\mu(A))}&\int_{A} \Phi\left(\frac{\left|f(x)\right|}{CK}\right) d\mu(x) \leq \frac{1}{C\mu(A)\phi(\mu(A))}\int_{B} \Phi\left(\frac{|f(x)|}{K}\right) d\mu(x) \leq 1.
	\end{align*}
	Hence \(\|f\|_{\Phi,\phi,A} \leq C\|f\|_{\Phi,\phi,B}\), completing the proof.
\end{proof}

\begin{prop}\label{propC}
	Let \((X,\,\mathfrak{M},\,\mu)\) be a measure space endowed with a ball-basis \(\mathfrak{B}\). Given a Young function \(\Phi \in \mathscr{Y}\) and a function \(\phi\in\mathscr{G}\), assume that two balls \(A,\,B\in\mathfrak{B}\) satisfy \(A \subset B\). Then
	\begin{align*}
		\|f \, \mathbb{I}_A\|_{\Phi,\phi,B} \leq \max\{1,C_{ic}\} \|f\|_{\Phi,\phi,A}.
	\end{align*}
\end{prop}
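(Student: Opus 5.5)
The plan is to imitate the proofs of Propositions~\ref{propA} and \ref{propB}: exhibit an explicit admissible $\lambda$ in the definition of $\|f\,\mathbb{I}_A\|_{\Phi,\phi,B}$. Set $K:=\|f\|_{\Phi,\phi,A}$. Since $\Phi$ is left-continuous, the infimum in the generalized Luxemburg norm is attained, so
\[
\int_A \Phi\!\left(\frac{|f(x)|}{K}\right)d\mu(x) \leq \mu(A)\phi(\mu(A)).
\]
If $K=0$ then $f=0$ a.e.\ on $A$ and there is nothing to prove. If $K=\infty$ the inequality is trivial. So assume $0<K<\infty$.

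Put $C:=\max\{1,C_{ic}\}\geq 1$. Because $f\,\mathbb{I}_A$ vanishes off $A$ and $A\subset B$, convexity of $\Phi$ with $\Phi(0)=0$ gives $\Phi(t/C)\leq \Phi(t)/C$. Hence
\[
\frac{1}{\mu(B)\phi(\mu(B))}\int_B \Phi\!\left(\frac{|f\,\mathbb{I}_A(x)|}{CK}\right)d\mu(x)
\leq \frac{1}{C\,\mu(B)\phi(\mu(B))}\int_A \Phi\!\left(\frac{|f(x)|}{K}\right)d\mu(x)
\leq \frac{\mu(A)\phi(\mu(A))}{C\,\mu(B)\phi(\mu(B))}.
\]

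Next, $\mu(A)\leq\mu(B)$, and $r\phi(r)$ is almost increasing since $\phi\in\mathscr{G}$. Therefore
\[
\mu(A)\phi(\mu(A))\leq C_{ic}\,\mu(B)\phi(\mu(B)),
\]
so the last quantity is at most $C_{ic}/\max\{1,C_{ic}\}\leq 1$. Thus $CK$ is admissible, and $\|f\,\mathbb{I}_A\|_{\Phi,\phi,B}\leq CK$, which is the claim.

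No step is a real obstacle. The only points needing care are the attainment of the infimum, where one can instead use $K+\varepsilon$ and let $\varepsilon\to0$, and the degenerate cases $K\in\{0,\infty\}$.
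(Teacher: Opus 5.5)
Your proof is correct and follows essentially the paper's argument: with $K=\|f\|_{\Phi,\phi,A}$ you test the value $\max\{1,C_{ic}\}K$, and you combine $\Phi(t/C)\le \Phi(t)/C$ with the almost-increasing property of $r\phi(r)$, which gives $\mu(A)\phi(\mu(A))\le C_{ic}\,\mu(B)\phi(\mu(B))$. Your explicit handling of the convexity step, of the attainment of the infimum, and of the degenerate cases $K\in\{0,\infty\}$ makes precise what the paper leaves implicit.
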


\begin{proof}
	Set \(K = \|f\|_{\Phi,\phi,A}\). Because \(\mu(A) \leq \mu(B)\) and \(\phi \in \mathscr{G}\), we have \(\mu(A)\phi(\mu(A)) \leq C_{ic} \mu(B)\phi(\mu(B))\). Consequently,
	\begin{align*}
		\frac{1}{\mu(B)\phi(\mu(B))} &\int_B \Phi\left(\frac{f \, \mathbb{I}_A(x)}{K\max\{1,C_{ic}\}}\right)d\mu(x) \leq \frac{1}{\mu(B) \phi(\mu(B))} \frac{1}{\max\{1,C_{ic}\}}\int_A \Phi\left( \frac fK \right)d\mu \\
		& \leq \frac{\mu(A)\phi(\mu(A))}{\mu(B)\phi(\mu(B))} \frac{1}{ \max\{1,C_{ic}\}} \leq 1.
	\end{align*}
	Hence the result follows.
\end{proof}

The framework we have been working with is that of an abstract Orlicz-Morrey space endowed with a ball-basis. We give its precise definition below.

\begin{definition}\label{Orlicz-Morrey space}
	Let \((X,\,\mathfrak{M},\,\mu)\) be a measure space endowed with a ball-basis \(\mathfrak{B}\), and let \(\Phi \in \mathscr{Y}\) be a Young function and \(\phi \in \mathscr{G}\). The {\sffamily abstract Orlicz-Morrey space endowed with a ball-basis} is defined as  
	\begin{align*}
		L^{(\Phi,\phi)}(X) = \left\{ f \in L_{{\rm loc}}^1(X) : \|f\|_{L^{(\Phi,\phi)}(X)} < +\infty \right\} =: L^{(\Phi,\phi)},
	\end{align*}
	\begin{align*}
		\|f\|_{L^{(\Phi,\phi)}(X)} = \sup_{B \in \mathfrak{B}}\|f\|_{\Phi,\phi,B} =: \|f\|_{L^{(\Phi,\phi)}}.
	\end{align*}
\end{definition}

It is not difficult to verify that \(\|\cdot\|_{L^{(\Phi,\phi)}}\) indeed defines a norm and that \(L^{(\Phi,\phi)}\) is a Banach space. Furthermore, this framework unifies several important spaces: the abstract Lebesgue spaces, abstract Morrey spaces, abstract generalized Morrey spaces, and abstract Orlicz spaces all emerge as special cases of the ball-basis endowed abstract Orlicz-Morrey space. The inclusion relationships among these abstract function spaces are as follows.

\begin{figure}[H]
	\centering
	\begin{tikzpicture}[
		node distance=2cm,
		box/.style={draw, rectangle, minimum width=1.5cm, minimum height=1cm},
		arrow/.style={-Stealth, thick}
		]
		\node (1) {\(L^{(\Phi,\phi)}(X)\)};
		\node[below right=1.5cm of 1] (3) {\(L^{(q,\phi)}(X)\)};
		\node[right=1.5cm of 3] (4) {\(\mathcal{M}^p_q(X)\)};
		
		\coordinate (midpoint) at ($(3)!0.5!(4)$);
		\node[above=3cm of midpoint] (2) {\(L^\Phi(X)\)};
		
		\node[above right=1.5cm of 4] (5) {\(L^p(X)\) and \(L^\infty(X)\)};

		\draw[arrow] (1) -- node[above left] {\(\phi_1(t)\)} (2);
		\draw[arrow] (1) -- node[below left] {\(\Phi_3(t)\)} (3);
		\draw[arrow] (3) -- node[above] {\(\phi_2(t)\)} (4);
		\draw[arrow] (2) -- node[above right] {\(\Phi_1(t)\) and \(\Phi_2(t)\)} (5);
		\draw[arrow] (4) -- node[below right] {Values of \(p\) and \(q\)} (5);
		
	\end{tikzpicture}
	\caption{Relationships among various abstract function spaces endowed with a ball-basis}
\end{figure}

\begin{itemize}[leftmargin=2em]
	\item[{\large\(\clubsuit\)}]\,\,\,{\bf Abstract Orlicz spaces \(L^{\Phi}(X)\)} 
\end{itemize}

\begin{itemize}
	\item For \(\phi_1(t)=1/t\), we have \(L^{(\Phi,\phi_1)} = L^{\Phi}\).
\end{itemize}

\begin{definition}\label{Orlicz space}
	Let \((X,\,\mathfrak{M},\,\mu)\) be a measure space endowed with a ball-basis \(\mathfrak{B}\). Given a Young function \(\Phi\), define the {\sffamily abstract Orlicz space endowed with a ball-basis} as 
	\begin{align*}
		L^\Phi(X) = \left\{ f \in L_{{\rm loc}}^1(X) : \, \text{there exists} \, k > 0 \, \text{such that} \, \int_X \Phi(k|f(x)|) \, d\mu(x) < +\infty \right\} =: L^\Phi.
	\end{align*}
	The norm on \(L^\Phi\) is given by \(\|f\|_{L^{\Phi}} =\|f\|_{\Phi} \), where \(\|f\|_\Phi\) denotes the standard Luxemburg norm. Furthermore, \(L^\Phi\) is a Banach space. One readily verifies that \(\int_X\Phi\left(|f(x)|/\|f\|_{\Phi}\right) d\mu(x) \leq 1 \).
\end{definition}

Let \((X,\,\mathfrak{M},\,\mu)\) be a measure space endowed with a ball-basis \(\mathfrak{B}\). For the above Orlicz space.

\begin{itemize}[leftmargin=4em]
	\item[{\large\(\spadesuit\)}]\,\,\,{\bf Abstract Lebesgue Spaces \(L^p(X)\) and \(L^\infty(X)\)} 
\end{itemize}

\begin{itemize}
	\item If \(\Phi_1(t)=t^p\) with \(1 \leq p < +\infty\), then
	\begin{align*}
		L^p(X)=\left\{f:\,\|f\|_{L^p(X)}=\left(\int_X f^p d\mu\right)^{\frac{1}{p}} < +\infty \right\} =: L^p.
	\end{align*}
	
	\vspace{6pt}
	
	\item If \(\Phi_2(t)=\left\{ 
	\begin{array}{lc}
		0, ~~ & 0 \leq t \leq 1, \\
		+\infty,  & t>1. \\
	\end{array}
	\right.\), then 
	\begin{align*}
		L^\infty(X)=\left\{f:\,\|f\|_{L^\infty(X)}:=\esssup \limits_{x \in X}|f(x)| < +\infty\right\} =: L^\infty.
	\end{align*}
\end{itemize}

\begin{remark}
	For \(\phi \sim 1\), the Orlicz-Morrey space \(L^{(\Phi,\phi)}(X)\) is norm-equivalent to \(L^{\infty}(X)\).
\end{remark}

\begin{itemize}[leftmargin=2em]
	\item[{\large\(\clubsuit\)}]\,\,\,{\bf Abstract generalized Morrey spaces \(L^{(q,\phi)}(X)\)} 
\end{itemize}

\begin{itemize}
	\item If \(\Phi\) takes the form \(\Phi_3(t)=t^q\) for \(1 \leq q < +\infty\), then the identity \(L^{(\Phi_3,\phi)} = L^{(q,\phi)}\) holds.
\end{itemize}

\begin{definition}\label{G Morrey space}
	Let \((X,\,\mathfrak{M},\,\mu)\) be a measure space endowed with a ball-basis \(\mathfrak{B}\). For a fixed function \(\phi\in\mathscr{G}\), the {\sffamily abstract generalized Morrey space endowed with a ball-basis} is defined by  
	\begin{align*}
		L^{(q,\phi)}(X) = \left\{ f\in L^q_{{\rm loc}}(X) :\, \sup_{B \in \mathfrak{B}} \, \left(\frac{1}{\mu(B)\phi(\mu(B))} \int_B |f|^q d\mu\right)^{\frac{1}{q}} < +\infty \right\} =: L^{(q,\phi)}.
	\end{align*}
	Its norm is given by
	\begin{align*}
		\|f\|_{L^{(q,\phi)}}= \sup_{B\in\mathfrak{B}} \,\left(\frac{1}{\mu(B)\phi(\mu(B))} \int_B |f|^q d\mu\right)^{\frac{1}{q}}.
	\end{align*}
	Moreover, \(L^{(q,\phi)}\) is a Banach space.
\end{definition}

\begin{itemize}[leftmargin=4em]
	\item[{\large\(\spadesuit\)}]\,\,\,{\bf Abstract Morrey spaces \(\mathcal{M}^p_q(X)\)} 
\end{itemize}

\begin{itemize}
	\item \(L^{(q,\phi)}\) coincides with the Morrey space \(\mathcal{M}^p_q\) when \(\phi_2(t)=t^{-q/p}\) and \(1 \leq q \leq p < +\infty\).
\end{itemize}

\begin{definition}\cite{Shan2026}\label{Morrey space}
	Let \((X,\,\mathfrak{M},\,\mu)\) be a measure space endowed with a ball-basis \(\mathfrak{B}\). Define the {\sffamily abstract Morrey space endowed with a ball-basis} as  
	\begin{align*}
		\mathcal{M}^p_q(X) = \left\{ f\in L^q_{{\rm loc}}(X) : \, \sup_{B\in\mathfrak{B}} \, \mu(B)^{\frac{1}{p}-\frac{1}{q}}\, \left(\int_B |f|^q d\mu \right)^\frac{1}{q} < +\infty \right\} =: \mathcal{M}^p_q.
	\end{align*}
	It is equipped with the norm  
	\begin{align*}
		\|f\|_{\mathcal{M}^p_q} = \sup_{B\in\mathfrak{B}} \, \mu(B)^{\frac{1}{p}-\frac{1}{q}}\, \left(\int_B |f|^q d\mu \right)^\frac{1}{q}.
	\end{align*}
	Moreover, \(\mathcal{M}_{q}^{p}\) is a Banach space.
\end{definition}

\begin{itemize}[leftmargin=6em]
	\item[{\large\(\bigstar\)}]\,\,\,{\bf Abstract Lebesgue spaces} 
\end{itemize}

\begin{itemize}
	\item In the case \(1 \leq q = p < +\infty\), \(\mathcal{M}^p_q\) coincides with \(L^p\). 
	
	\item For parameters satisfying \(1 \leq q \leq p = +\infty\), \(\mathcal{M}^p_q\) is identified as the space \(L^\infty\).
\end{itemize}

Let \((X,\,\mathfrak{M},\,\mu)\) be a measure space endowed with a ball-basis \(\mathfrak{B}\). For a measurable set \(E \subset X\), a measurable function \(f\), and a number \(t > 0\), we denote  
\begin{align*}
	d(E,f,t)=\mu(\{x \in E:|f(x)|>t\}).
\end{align*}
When \(E=X\), we simply write \(d(f,t)\) instead of \(d(X,f,t)\).

Given a Young function \(\Phi \in \mathscr{Y}\), a function \(\phi \in \mathscr{G}\), and a ball \(B \in \mathfrak{B}\), we define  
\begin{align*}
	\|f\|_{w,\Phi,\phi,B} = \inf\left\{\lambda > 0 : \sup_{t>0} \frac{t d(B, \Phi(|f|/\lambda), t)}{\mu(B)\phi(\mu(B))} \leq 1\right\}.
\end{align*}
A direct verification shows that \(\|f\|_{w,\Phi,\phi,B} \leq \|f\|_{\Phi,\phi,B}\) and that
\begin{align*}
	\sup_{t>0}t d(E, \Phi(|f|), t) = \sup_{t>0}t d(E, f, \Phi^{-1}(t)) = \sup_{t>0}\Phi(t) d(E, f, t).
\end{align*}

\begin{definition}\label{weak Orlicz-Morrey space}
	Let \((X,\,\mathfrak{M},\,\mu)\) be a measure space endowed with a ball-basis \(\mathfrak{B}\), and let \(\Phi \in \mathscr{Y}\) and \(\phi \in \mathscr{G}\). The {\sffamily abstract weak Orlicz-Morrey space endowed with a ball-basis} is defined by
	\begin{align*}
		wL^{(\Phi,\phi)}(X) = \left\{ f \in L^{1}_{\rm loc}(X) : \|f\|_{wL^{(\Phi,\phi)}} < +\infty \right\} =: wL^{(\Phi,\phi)},
	\end{align*}
	where
	\begin{align*}
		\|f\|_{wL^{(\Phi,\phi)}}:= \sup_{B \in \mathfrak{B}}\|f\|_{w, \Phi, \phi, B}.
	\end{align*}
\end{definition}
It can be readily verified that \(\|\cdot\|_{wL^{(\Phi,\phi)}}\) is a {\sffamily quasi-norm}. With this quasi-norm, \(wL^{(\Phi,\phi)}(X)\) becomes a {\sffamily quasi-Banach space}. In particular, the following quasi-triangle inequality holds:
\begin{align*}
	\|f+g\|_{wL^{(\Phi,\phi)}} \leq 2 \left(\|f\|_{wL^{(\Phi,\phi)}} + \|g\|_{wL^{(\Phi,\phi)}}\right), \qquad \forall f,g \in wL^{(\Phi, \phi)}(X).
\end{align*}

\end{appendices}

\end{document}